\documentclass[amscd,amssymb,verbatim,10pt]{amsart}
\usepackage[fontsize = 10bp]{fontsize}
\usepackage{scalerel}
\usepackage{stackengine,wasysym}
\usepackage{bbm}
\usepackage{enumerate}
\usepackage{mathtools}

\usepackage{amssymb,latexsym, amsmath, amscd, array, graphicx, pb-diagram}
\usepackage{hyperref}
\usepackage{color}

\usepackage{tikz-cd}

\usepackage{color}
\hypersetup{
    colorlinks=true,
    linkcolor=red,
    urlcolor=blue,
    citecolor=blue
           }

\usepackage{amsthm}
\usepackage{hyperref}
\usepackage[all]{xy}
\usepackage[scr]{rsfso}

\numberwithin{equation}{section}

\swapnumbers

\theoremstyle{plain}

\newtheorem{theorem}{Theorem}[section]

\newtheorem{lemma}[theorem]{Lemma}

\newtheorem{proposition}[theorem]{Proposition}
\newtheorem{corollary}[theorem]{Corollary}

\theoremstyle{definition}

\newtheorem{definition}[theorem]{Definition}

\newtheorem{remark}[theorem]{Remark}

\newtheorem{ex}[theorem]{Example}

 \newcommand{\Wi}{\widetilde}

\DeclareMathOperator{\cat}{{\mathsf{cat}}}
\DeclareMathOperator{\TC}{{\mathsf{TC}}}

\DeclareMathOperator{\secat}{{\mathsf{secat}}}
\DeclareMathOperator{\dsecat}{{\mathsf{dsecat}}}
\DeclareMathOperator{\supp}{{\rm supp}}

\DeclareMathOperator{\cu}{{\mathsf{cup}}}
\DeclareMathOperator{\dcat}{{\mathsf{dcat}}}

\DeclareMathOperator{\dTC}{{\mathsf{dTC}}}

\DeclareMathOperator{\cd}{{\mathsf{cd}}}

\DeclareMathOperator{\Ker}{{\rm Ker}}

\def\id{\protect\operatorname{Id}}
\def\Im{\protect\operatorname{Im}}

\def\pr{\protect\operatorname{pr}}

\newcommand{\cA}{{\mathcal{A}}}
\newcommand{\cB}{{\mathcal{B}}}
\newcommand{\cG}{{\mathcal{G}}}

\makeatletter
\def\@tocline#1#2#3#4#5#6#7{\relax
  \ifnum #1>\c@tocdepth 
  \else
    \par \addpenalty\@secpenalty\addvspace{#2}%
    \begingroup \hyphenpenalty\@M
    \@ifempty{#4}{%
      \@tempdima\csname r@tocindent\number#1\endcsname\relax
    }{%
      \@tempdima#4\relax
    }%
    \parindent\z@ \leftskip#3\relax \advance\leftskip\@tempdima\relax
    \rightskip\@pnumwidth plus4em \parfillskip-\@pnumwidth
    #5\leavevmode\hskip-\@tempdima
      \ifcase #1
       \or\or \hskip 1em \or \hskip 2em \else \hskip 3em \fi%
      #6\nobreak\relax
    \hfill\hbox to\@pnumwidth{\@tocpagenum{#7}}\par
    \nobreak
    \endgroup
  \fi}
\makeatother

\def\scr{\mathcal}
\def\A{{\scr A}}
\def\B{{\scr B}}

\def\C{{\mathbb C}}

\def\Q{{\mathbb Q}}
\def\R{{\mathbb R}}

\newcommand{\D}{{\sf {D}}}
\newcommand{\dep}{{\sf {dep}}}
\newcommand{\sd}{{(\sf {d})}}
\newcommand{\dD}{{\sf {dD}}}

\newcommand{\cD}{{\mathcal{D}}}

\def\1{\hbox{\rm\rlap {1}\hskip.03in{\rom I}}}
\def\Bbbone{{\rm1\mathchoice{\kern-0.25em}{\kern-0.25em}
{\kern-0.2em}{\kern-0.2em}I}}

\def\wt{\widetilde}
\def\wh{\widehat}

\def\ov{\overline}

\usepackage{comment}

\long\def\forget#1\forgotten{} %

\newcommand\ver[1]{\marginpar{\tiny Changed in Ver \VER}}

\date{\today}

\usepackage{adjustbox}

\begin{document}

\begin{abstract}
Homotopic distance is a numerical homotopy invariant that quantifies the homotopic distinction between two or more continuous maps. In this paper, we look at various versions of homotopic distance between maps and study them on maps to group-like spaces and CW $H$-spaces. In particular, (1)~we develop the theories of probabilistic and diagonalized versions of the homotopic distance and compare their properties, and (2)~we show that all versions of the homotopic distance can be calculated precisely in terms of their respective versions of the Lusternik--Schnirelmann category when group-like spaces or CW $H$-spaces are involved. Our results are refined in the setting of rational groups. On the way, we also study loopings of maps and the nilpotency of the set of homotopy classes of maps to group-like spaces.
\end{abstract}

\title[Homotopic distances and group-like spaces]{Homotopic distances and group-like spaces}

\author[E.~Jauhari]{Ekansh~Jauhari}

\author[J.~Oprea]{John~Oprea}

\address{Ekansh Jauhari, Department of Mathematics, University of Florida, 358 Little Hall, Gainesville, FL 32611, USA.}

\email{ekanshjauhari@ufl.edu}

\address{John Oprea, Department of Mathematics, Cleveland State University, Cleveland, OH 44115, USA.}

\email{jfoprea@gmail.com}

\subjclass[2020]
{Primary 
55M30, 
Secondary 
55P45, 
55P62, 
55P35, 
55S15.  
}  

\keywords{Homotopic distance, rational space, nilpotent group, loop structure, topological group, $H$-space, sectional category.}

\maketitle
\tableofcontents

\section{Introduction}\label{sec: intro}
How ``homotopically far apart'' are two given continuous maps $f,g\colon X \to Y$? The notion of \emph{homotopic distance}, first put forward in~\cite{MM}, gives a numerical answer to this question by recording the smallest size of open covers of the domain on each of whose members the two maps are homotopic. When $Y=X\times X$ and $f$ and $g$ are the factor inclusions, the homotopic distance between them is the \emph{Lusternik--Schnirelmann category} of $X$, a well-studied numerical homotopy invariant initially motivated from critical point theory and now crucial to the study of various topics in algebraic and geometric topology~\cite{CLOT}. On the other hand, when $X=Y\times Y$ and $f$ and $g$ are the factor projections, the homotopic distance between them is \emph{Farber's topological complexity} of $Y$, an invariant central to the study of the motion planning problem in topological robotics that is known to have connections with classical problems in geometry and topology~\cite{Farber book}. One gets similar interpretations via the notion of \emph{sequential homotopic distance} when considering a (finite) sequence of continuous maps. 
This unifying property of the homotopic distance is one of the several reasons to examine its behavior (especially on maps between spaces having interesting extra structures), and to study its other naturally motivated versions. In this paper, we pursue both of these directions of research. Our goal here is twofold: the first is to define and study new invariants related to sequential homotopic distance, and the second is to study the original and new invariants on maps to topological groups and $H$-spaces.

On (path-connected) topological groups and CW $H$-spaces, it is well-known that the topological complexity $\TC$, which is in general quite difficult to determine precisely, coincides with the LS-category $\cat$, which is more tame on such spaces, see~\cite{Far,LS}. This warrants the investigation of the general notion of the homotopic distance between maps when the codomain is such a \emph{group-like space}! In this setting, we explain that the sequential homotopic distance is completely understood in terms of the LS-category of a certain map. Furthermore, if we then enter the world of rational homotopy theory, we discover that the homotopic distance between (rationalized) maps to rational groups is measured simply by products of differences of the induced homomorphisms in rational cohomology, and therefore has a computationally effective algebraic description. Some auxiliary results that we obtain in this paper when path-connected topological groups $G$ are involved could be interesting in their own right. These include a new upper bound on the nilpotency class of the nilpotent group of homotopy classes of maps $X^m\to G$ that are null-homotopic on the diagonal $\Delta X\subset X^m$, and the existence of a new homotopy metric on the \emph{rationalized gauge groups} of principal $G$-bundles.

As for the invariants related to sequential homotopic distance, we define its probabilistic and diagonalized versions and obtain similar conclusions when group-like spaces are involved. Both LS-category and topological complexity have recently been re-imagined within a probabilistic framework, and this re-imagining has been dubbed as \emph{distributional category} and \emph{distributional topological complexity}, denoted by $\dcat$ and $\dTC$, respectively (see, for example,~\cite{DJ,KW}). The distributional invariants serve as lower bounds for the standard invariants 
and display interesting phenomena in their own right. For instance, while 
$\cat(\R P^n) = n$, we have $\dcat(\R P^n)=1$ for any $n$. But more is true: distributional invariants display a wholistic quality which differs from the 
reductionist approach to, for example, the LS-category. Indeed, the standard definition of $\cat(X)$ records one less than the smallest number of open sets 
that cover $X$ and whose inclusions are null-homotopic, and so in a sense, these open 
sets are the atoms that build the space, and the LS-category is a measure of the complexity
of assemblage. The distributional invariants, on the other hand, \emph{do not} have such an atomic formulation in terms of open sets, but rather count the least number of 
superpositions required to assemble ``information''. We will review the details in
Subsection \ref{subsec: secat}. 

In this direction, we show that the distributional version of the sequential homotopic distance interpolates between the distributional category and the sequential distributional complexity of spaces, and that it admits a general cohomological lower bound, both in analogy with the case of the original homotopic distance. For such results, we cannot use open sets, but must rely on more homotopical and classical methods. Because the distributional invariants force us to use a global approach, we also see the original homotopic distance and its characteristics from a new perspective. Along the way, we define the \emph{diagonal (distributional) homotopic distance} between maps and show that it similarly interpolates between the \emph{(distributional) one-category} and the \emph{diagonal (distributional) topological complexity} of spaces (see, for instance,~\cite{FGLO1,JO}). The new homotopic distances not only provide a unifying framework to study the respective distributional and diagonalized category and complexity-type invariants, but also shed new light on the original homotopic distance by displaying both similar and contrasting features.

\subsection*{Main results} 
Before stating our results, we declare that $\D(f_1,\dots,f_m)$ denotes the homotopic distance between $m$ maps $f_i\colon X\to Y$. We use the symbol $\D^{\cD}$ to talk about diagonal homotopy distances, and we write $\sd\D$ and $\sd\D^\cD$ to indicate that the same conclusions hold for \emph{both} the ordinary and the distributional invariants.

First, as mentioned above, we precisely determine the (distributional) homotopic distance $\sd\D$ in terms of the (distributional) category $\sd\cat$ of a map, and the diagonal (distributional) homotopic distance $\sd\D^\cD$ in terms of the (distributional) one-category $\sd\cat_1$ of the same map, when the codomain is a topological group.

\begin{theorem}[Theorems~\ref{thm: main1} and~\ref{thm: main2}]
    If $G$ is a path-connected topological group and $f_i\colon X\to G$ are maps 
for $1\le i \le m$, then
    \[
    \sd\D(f_1,\dots,f_m)=\sd\cat(F_m\circ (f_1,\ldots,f_m)\colon X\to G^{m-1}), \quad \text{and},
    \]
    \[
    \sd\D^{\cD}(f_1,\dots,f_m)=\sd\cat_1(F_m \circ (f_1,\ldots,f_m)\colon X\to G^{m-1}),
    \]
    where we have
$(f_1,\ldots,f_m)(x)= (f_1(x),\ldots,f_m(x)) \in G^m$ and $F_m(x_1,\dots,x_m)= (x_1x_2^{-1},x_2x_3^{-1},\dots,x_{m-1}x_m^{-1}) \in G^{m-1}$.
\end{theorem}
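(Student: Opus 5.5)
The idea is to show that the $m$ maps are homotopic over a subset $U\subseteq X$ exactly when the composite $\Phi:=F_m\circ(f_1,\ldots,f_m)\colon X\to G^{m-1}$ is null-homotopic over $U$. Both sides of each equality are then minimal sizes of covers, or of distributional data, on the same kind of pieces, so they coincide. To set this up, recall that $\D(f_1,\dots,f_m)$ is the least $k$ such that $X$ is covered by $k+1$ open sets $U_j$ on each of which $f_1|_{U_j}\simeq\cdots\simeq f_m|_{U_j}$. Similarly, $\cat(\Phi)$ is the least $k$ such that $X$ is covered by $k+1$ open sets on which $\Phi$ is null-homotopic. For the diagonal versions we use the same description, with open sets replaced by the relevant diagonal and one-category conditions. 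For the distributional versions we use the global reformulation from Subsection~\ref{subsec: secat}, in terms of sectional-category-type invariants of pullback fibrations.

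\textbf{Pointwise comparison.} Fix $U\subseteq X$. Suppose $H_i\colon f_i|_U\simeq f_{i+1}|_U$ for each $i$. Then $(x,t)\mapsto H_i(x,t)f_{i+1}(x)^{-1}$ is a homotopy from $f_if_{i+1}^{-1}|_U$ to the constant map at $e$. Doing this in each coordinate shows $\Phi|_U$ is null-homotopic. Conversely, suppose each coordinate $f_if_{i+1}^{-1}|_U$ is homotopic to a constant $c_i$. Since $G$ is path-connected, we may take $c_i=e$. Multiplying on the right by $f_{i+1}$ gives $f_i|_U\simeq f_{i+1}|_U$, and chaining these gives all $m$ maps homotopic on $U$. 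This settles the ordinary equality for $\D$ versus $\cat$. The diagonal condition on a piece for $\D^\cD$ then corresponds to the one-category condition on $\Phi$ (triviality on $\pi_1$, or the analogous fibrewise condition as the paper defines it). The same multiplication trick applies, since it acts at the level of homotopies and so is compatible with restriction to the relevant subspaces or induced maps on $\pi_1$.

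\textbf{Distributional versions.} Here there are no open sets, so I would upgrade the pointwise comparison to a fibre-homotopy equivalence. The homotopic distance is the sectional category of the pullback along $(f_1,\dots,f_m)\colon X\to G^m$ of the fibration $e_m\colon PG\to G^m$, $\gamma\mapsto(\gamma(t_1),\dots,\gamma(t_m))$. The category of $\Phi$ is the sectional category of the pullback along $\Phi$ of the path fibration $P_0G^{m-1}\to G^{m-1}$. The maps $(x_i)\mapsto(x_ix_{i+1}^{-1},x_m)$ give a homeomorphism $G^m\cong G^{m-1}\times G$. Under this homeomorphism, $e_m$ corresponds, up to fibre homotopy equivalence, to $(\text{path fibration})\times\id_G$, which uses the identification of $PG$-type fibrations with the diagonal map $G\to G^m$. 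Sectional category, distributional sectional category, and their one-category analogues are invariant under fibre homotopy equivalence and under pullback of such equivalences. Pulling back along $(f_1,\dots,f_m)$ then identifies the two fibrations over $X$, which gives all four equalities at once; the open-cover argument above serves as a sanity check. I expect the main obstacle to be this step: making the fibre homotopy equivalence between $e_m$ and the twisted path fibration explicit using the group multiplication, and checking that the distributional invariants (defined via probability measures on fibres) are genuinely invariant under it.
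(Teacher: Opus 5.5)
Your argument for $\sd\D=\sd\cat(\Phi)$ is correct, and it differs from the paper's. For the ordinary invariant you compare open covers directly, multiplying homotopies on the right by $f_{i+1}(x)^{\pm1}$; the paper never uses open sets here. For the distributional invariant you use the shear homeomorphism $h(x_1,\dots,x_m)=(x_1x_2^{-1},\dots,x_{m-1}x_m^{-1},x_m)$, for which $F_m=\pr_1\circ h$.

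The step you flag as the main obstacle is actually easy for a strict group. The map $\gamma\mapsto(\gamma(t_1),\dots,\gamma(t_m);\alpha_1,\dots,\alpha_{m-1})$, with $\alpha_i(s)=\gamma(t_i+s(t_{i+1}-t_i))\,\gamma(t_{i+1})^{-1}$, is a homeomorphism $G^I\to F_m^*P_0(G^{m-1})$ over $G^m$. Its inverse is given by $\gamma(t_i+s(t_{i+1}-t_i))=\alpha_i(s)\,x_{i+1}$. So $\pi^G_m\cong F_m^*p$ on the nose, and you need no invariance of $\dsecat$ under fibre homotopy equivalence. The paper instead builds non-inverse fibrewise maps $\theta,\phi$ in both directions from the homotopies $H,K,L$, because it must also cover non-associative CW $H$-spaces. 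It then transfers sections of the $k$-fold constructions through these maps. Your route is cleaner but specific to strict groups.

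The gap is in the diagonal equalities, and it is genuine for $\dD^{\cD}=\dcat_1(\Phi)$. These invariants are $\dsecat$ of the coverings $(f_1,\dots,f_m)^*\wh\pi^G_m$ and $\Phi^*\wt p$, where $\wt p\colon\wt{G^{m-1}}\to G^{m-1}$ is the universal cover. They are not sectional categories of the path fibrations you identified, so your global step does not give ``all four equalities at once.'' Your open-set sketch for $\D^{\cD}$ also leaves implicit the one real input, which is a $\pi_1$ computation.

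Pointwise products of loops in $G$ are homotopic to concatenations, so $(x,y)\mapsto xy^{-1}$ induces $(a,b)\mapsto a-b$ on the abelian group $\pi=\pi_1(G)$. Hence $h_\#(a_1,\dots,a_m)=(a_1-a_2,\dots,a_{m-1}-a_m,a_m)$, and $h_\#(\Delta)=0\times\pi$. It follows that $h$ identifies the connected cover $\wh G^m\to G^m$ with $\wt{G^{m-1}}\times G\to G^{m-1}\times G$, i.e. $\wh\pi^G_m\cong F_m^*\wt p$. Pulling back along $(f_1,\dots,f_m)$ gives $\wh q^X\cong\Phi^*\wt p$, which yields both $\D^{\cD}=\cat_1(\Phi)$ and $\dD^{\cD}=\dcat_1(\Phi)$.

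This is exactly the paper's proof of Theorem~\ref{thm: main2}: compute $\Ker((F_m)_\#)=\Delta$, lift $F_m$ to $\wh G^m\to\wt G^{m-1}$, and observe that the square is a pullback because the lift is bijective on fibres. The same computation, together with commutativity of $\pi$ (so that ``conjugate to a subgroup of $\Delta$'' just means ``contained in $\Delta$''), is what your open-set argument for $\D^{\cD}$ needs in order to become a proof.
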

A statement like the above is more generally true for group-like spaces and (non-homotopy-associative) CW $H$-spaces (see Theorems~\ref{thm: main1} and~\ref{thm: main2}), and it recovers the main result of~\cite{LS}. Our proof involves the use of I.~M.~James's algebraic loop structure on the set of homotopy classes of maps from CW complexes to $H$-spaces, and the theory of homotopy pullbacks.

In the world of rational homotopy, we obtain a cohomological description.

\begin{theorem}[Theorem~\ref{thm:nilfgstar}]
    If $G$ is a compact topological group, $X$ is a simply connected finite CW complex, and $f_i\colon X\to G$ are maps for $1\le i \le m$, 
then
    \[
\D(f_{1\Q},\dots,f_{m\Q})=\cu_{\Q}\left(\Im\left(\Delta_{m-1}^*\circ 
\bigotimes_{i=1}^{m-1}\left(f_i^*-f_{i+1}^*\right)\right)
\right),
\]
where $f_{i\Q}\colon X_\Q \to G_\Q$ are the respective rationalized maps,  
$\Delta_{m-1}\colon X\to X^{m-1}$ is the diagonal map, and $\cu_\Q$ stands for the rational cup length. The same equality holds 
for $\dD(f_{1\Q},\dots,f_{m\Q})$ as well if $G$ is a compact Lie group.
\end{theorem}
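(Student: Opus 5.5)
The plan is to reduce the statement to a category computation via the first main theorem, and then evaluate that category with rational homotopy theory. Since $G$ is a compact, hence path-connected (we take the identity component, or assume connectedness as in the setting of Theorem~\ref{thm: main1}), topological group, $G_\Q$ is a rational group-like space. Rationally, $G_\Q \simeq \prod_j K(\Q,2n_j-1)$ is a product of odd-dimensional Eilenberg--MacLane spaces, so $G_\Q$ is an H-space with trivial Whitehead products. Applying Theorem~\ref{thm: main1} to the rationalized maps $f_{i\Q}\colon X_\Q\to G_\Q$ gives
\[
\D(f_{1\Q},\dots,f_{m\Q})=\cat\big(\phi\colon X_\Q\to G_\Q^{m-1}\big),\qquad \phi=F_m\circ(f_{1\Q},\dots,f_{m\Q}).
\]

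The next task is to identify $\phi$ in cohomology. Because $G_\Q$ is a product of $K(\Q,\text{odd})$'s, a map $X_\Q\to G_\Q$ is determined up to homotopy by the images of the primitive generators $x_j\in H^{2n_j-1}(G;\Q)$. These generators are primitive, so the loop structure is additive on them. For the composite of $f_{i\Q}$ with $f_{i+1\Q}^{-1}$ we then get $(f_if_{i+1}^{-1})^*(x_j)=f_i^*(x_j)-f_{i+1}^*(x_j)$. Writing $g_i=f_if_{i+1}^{-1}$, the map $\phi$ is $(g_1,\dots,g_{m-1})$, and this factors as $X\xrightarrow{\Delta_{m-1}}X^{m-1}\xrightarrow{\prod g_i}G^{m-1}$. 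Consequently $\phi^*=\Delta_{m-1}^*\circ\bigotimes_i g_i^*$. On the free graded-commutative algebra $H^*(G;\Q)=\Lambda(x_j)$, the homomorphism $g_i^*$ is the algebra map that agrees with $f_i^*-f_{i+1}^*$ on generators. I would interpret the notation in the statement in exactly this sense, so that the image in question is the subalgebra generated by the classes $f_i^*x_j-f_{i+1}^*x_j$.

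The main step is to show that for a map $\phi\colon X_\Q\to K$ into a rational product of odd Eilenberg--MacLane spaces, with $X$ a simply connected finite complex,
\[
\cat(\phi)=\cu_\Q(\Im\phi^*),
\]
where the right-hand side is the longest nonzero product of positive-degree elements of $\Im\phi^*$. The lower bound $\cat(\phi)\ge\cu(\Im\phi^*)$ is the standard cup-length bound for the category of a map, already available earlier in the paper. For the upper bound, let $n=\cu(\Im\phi^*)$. I would use the Ganea characterization, which says $\cat(\phi)\le n$ iff $\phi$ lifts through the Ganea fibration $p_n\colon G_n(K)\to K$. Since $K$ is a product of odd spheres rationally, I would instead model $\phi$ in Sullivan models as $\Lambda(x_j)\to A_{PL}(X)$, with $x_j\mapsto$ cocycle representatives $a_j$. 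The Félix--Halperin/Ganea description of $G_n(K)$ has model $\Lambda(x_j)\oplus$ (something killing products of length $>n$); more concretely, $\cat$ of a map to $K$ is bounded by the nilpotency of the ideal generated by the $a_j$ in a suitable model. The hard part is the realization step: products of length $n+1$ of the $a_j$ vanish in cohomology, but one must choose a model in which they vanish strictly. To handle this, I would take a minimal model (or a formal-type model) of $X$ and build the lift degree by degree. An alternative I would try first is to exploit that $\phi$ is a map into an H-space with primitive generators, and use the known result (Lupton--Oprea type, $\cat$ of maps into rational H-spaces equals rational cup length of the image) presumably proved earlier in the paper for rational groups. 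This identifies $\cat(\phi)$ with $\cu_\Q(\Im\phi^*)$ and finishes the proof.

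For the distributional statement, when $G$ is a compact Lie group I would use Theorem~\ref{thm: main2}, which gives $\dD=\dcat(\phi)$. I would then sandwich this quantity: $\cu_\Q(\Im\phi^*)\le\dcat(\phi)\le\cat(\phi)$. The lower bound comes from the rational cup-length bound for $\dcat$ of maps, which holds rationally because the distributional invariants admit cohomological lower bounds with field coefficients. The Lie hypothesis is presumably needed for the relevant $\dcat$ rationalization or lower-bound result; I expect it is an assumption of an earlier lemma rather than something used directly in this argument.
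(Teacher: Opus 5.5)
\textbf{The gap.} Your reduction is the same as the paper's. Theorem~\ref{thm: main1} gives $\D(f_{1\Q},\dots,f_{m\Q})=\cat(\phi)$ with $\phi=F_m\circ(f_{1\Q},\dots,f_{m\Q})$. Primitivity of the exterior generators then identifies $\phi^*$ with $\Delta_{m-1}^*\circ\bigotimes_i(f_i^*-f_{i+1}^*)$. The genuine gap is the upper bound $\cat(\phi)\le\cu_\Q(\Im\phi^*)$, which you flag as the hard part and then defer to an earlier result. The paper closes this step by citing Lemma~\ref{lem:catf}, but its appendix argument does not prove it. It supposes $\cat_0(g)=n<s$ and derives a contradiction, which yields $\cat_0(g)\ge s$, i.e.\ the cup-length lower bound a second time. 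In fact the upper bound is false in general, for exactly the reason you anticipated. Classes whose long products vanish in cohomology need not admit representatives whose products vanish strictly, and the resulting Massey products obstruct lifting through the Ganea fibration.

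\textbf{A counterexample.} Let $G=S^3\times S^3$. Let $X$ be the unit sphere bundle of the pullback of $TS^6$ along a degree-one map $S^3\times S^3\to S^6$. This is a simply connected closed $11$-manifold with minimal model $(\Lambda(a,b,c),\,dc=ab)$, where $|a|=|b|=3$ and $|c|=5$, and the projection $f_1\colon X\to G$ is modeled by the inclusion of $\Lambda(a,b)$. Take $f_2$ constant. Then $\Im(f_1^*-f_2^*)$ is spanned by $a,b$, and $ab=0$, so the right-hand side of the statement equals $1$.

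On the other hand, $\langle a,b,b\rangle=\{[cb]\}\neq 0$, with zero indeterminacy because $H^5(X;\Q)=0$ and $\Lambda(a,b,c)$ has nothing in degree $7$. If $\cat(f_{1\Q})\le 1$, then $f_{1\Q}$ would factor through $G_1(G_\Q)\simeq\Sigma\Omega G_\Q$. That space is a rational wedge of spheres, in which all Massey products contain $0$, so naturality would force $0\in\langle a,b,b\rangle$. Hence $\D(f_{1\Q},f_{2\Q})=\cat(f_{1\Q})=2$.

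The same conclusion follows in the paper's $\cat_0$ language. The relative model of $\Lambda(a,b)\to\Lambda(a,b)/(ab)$ must contain elements $z,w$ with $dz=ab$ and $dw=az$. Any $\rho$ with $\rho\circ j=i$ is forced to send $z\mapsto c$, and then $d\rho(w)=ac$ is impossible.

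\textbf{What survives, and minor points.} Your strategy, like the paper's, only goes through when the representatives can be chosen with strictly vanishing $(n+1)$-fold products, for instance when $X$ is formal. Separately, the distributional equality comes from Theorem~\ref{thm: main1}, which covers $\sd\D$. Theorem~\ref{thm: main2} is the wrong citation, since it concerns $\D^{\cD}$ and $\cat_1$. Your sandwich $\cu_\Q(\Im\phi^*)\le\dcat(\phi)\le\cat(\phi)$ needs the same missing upper bound, so it inherits the gap.
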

Again, the above is true in a more general context, and its proof requires the use of tools from rational homotopy theory, such as the Sullivan 
minimal models and the \emph{rational category} of maps. As an application of this characterization, we also obtain a new invariant of gauge groups, see Section~\ref{sec:mapgroup}.

Next, it is a classical result due to G.~Whitehead that the homotopy classes of maps from an ANR space $X$ into a group-like space $G$ form 
a nilpotent group of nilpotency class bounded above by $\cat(X)$. While it is immediate that 
homotopic distance between maps is bounded above by the category of the domain $X$ of the maps, here we use the 
nilpotent structure of $[X,G]$ to refine this estimate. 

\begin{theorem}[Proposition~\ref{prop:catfbound}]
    If $X$ is an ANR, $G$ is a group-like space, and $f_i\colon X\to G$ are maps 
for $1\le i \le m$, then we have that
    \[
    \D(f_1,\dots,f_m)\le\cat(X)-\min\left\{\dep_{\cG}(f_i)\ \middle|\ 1\le i \le m\right\}    +1,
    \]
    where $\dep_{\cG}(f_i)$ is the depth\footnote{\hspace{0.5mm}The \emph{depth} of $f_i$ relative to a central series is defined in Subsection~\ref{subsec: whitehead}.} of $f_i$ in $\cG=[X,G]$ for each $i$, 
relative to the central series described in the proof of Theorem~\ref{thm:whitehead}.
\end{theorem}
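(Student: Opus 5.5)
The plan is to extract from the proof of Theorem~\ref{thm:whitehead} an explicit open cover adapted to the central series of $\cG=[X,G]$, and then to build a cover of $X$ on which all the $f_i$ are pairwise homotopic.

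\textbf{Step 1: the cover and the central series.} Let $n=\cat(X)$. Since $X$ is an ANR, I fix a categorical open cover $U_0,\dots,U_n$ of $X$, each inclusion $U_j\hookrightarrow X$ null-homotopic. I expect the central series in the proof of Theorem~\ref{thm:whitehead} to be built from this cover in Whitehead's way:
\[
\Gamma_k=\Ker\bigl([X,G]\to[U_0\cup\dots\cup U_{k-1},G]\bigr).
\]
Thus $\Gamma_k$ consists of the classes of maps that are null-homotopic on the first $k$ members of the cover. This gives $\Gamma_{n+1}=1$, and the commutator estimate $[\Gamma_1,\Gamma_k]\subseteq\Gamma_{k+1}$ is the content of Whitehead's theorem. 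The depth $\dep_\cG(f)$ is then the largest $k$ with $[f]\in\Gamma_k$.

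Whatever the precise indexing, the one property I need is this: if $[f]\in\Gamma_k$, then $f$ is null-homotopic on an open set $V_k$ that together with $n+1-k$ categorical open sets covers $X$. I will check this against the definition in Subsection~\ref{subsec: whitehead}. If the definition is phrased using closed sets or a shrunken cover, I will use the ANR hypothesis (homotopy extension and neighborhood arguments) to replace it by an open $V_k$. This bookkeeping is the main point to verify.

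\textbf{Step 2: the common open set.} Put $d=\min_i\dep_\cG(f_i)$. Every $[f_i]$ lies in $\Gamma_d$, since the series is decreasing. The key observation is that the series is defined from one fixed cover, independent of $i$. Hence all the $f_i$ are null-homotopic on the same open set $V=U_0\cup\dots\cup U_{d-1}$. Since $G$ is path-connected, all constant maps into $G$ are homotopic. So $f_i|_V\simeq f_{i'}|_V$ for all $i,i'$.

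\textbf{Step 3: the remaining sets and the count.} For $j\ge d$, the inclusion $U_j\hookrightarrow X$ is null-homotopic. Therefore each $f_i|_{U_j}$ is null-homotopic, and again all the $f_i|_{U_j}$ are homotopic to one another. The family $\{V,U_d,\dots,U_n\}$ is an open cover of $X$ with $n-d+2$ members, on each of which $f_1,\dots,f_m$ are pairwise homotopic. With the normalized convention that $k+1$ open sets give distance at most $k$, this yields
\[
\D(f_1,\dots,f_m)\le n-d+1=\cat(X)-\min_i\dep_\cG(f_i)+1.
\]
The boundary cases are consistent. If $d=0$ (no constraint), the bound reduces to the trivial estimate $\D\le\cat(X)+1$, or, after discarding $V=\emptyset$, to $\D\le\cat(X)$.
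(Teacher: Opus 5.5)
Your argument is correct, and it takes a more direct route than the paper. The paper does not build the cover by hand. It rewrites $\D(f_1,\dots,f_m)=\cat(F_m\circ(f_1,\dots,f_m))$ via Theorem~\ref{thm: main1}. It then applies the preceding bound $\cat(f)\le\cat(X)-\dep_{\cG}(f)+1$ to the single map $F_m\circ(f_1,\dots,f_m)\colon X\to G^{m-1}$. Finally it checks that this map is null-homotopic on $\cA_{k-1}$, where $k=\min_i\dep_{\cG}(f_i)$, so its depth in $[X,G^{m-1}]$ is at least $k$.

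You instead observe that all the $f_i$ are null-homotopic on the common set $\cA_{d-1}$ and on every categorical set. Hence they are pairwise homotopic on each member of an $(n-d+2)$-element cover. This needs only path-connectedness of $G$ and the fact that the filtration comes from one fixed categorical cover. None of the $H$-space and homotopy-pullback machinery behind Theorem~\ref{thm: main1} enters.

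What the paper's detour buys is the intermediate inequality $\D(f_1,\dots,f_m)\le\cat(X)-\dep_{\cG^{m-1}}(F_m\circ(f_1,\dots,f_m))+1$. This can be strictly sharper than the stated bound, because the ``difference map'' can lie deeper than any $f_i$. For example, if $f_1=\cdots=f_m$ is essential, then $F_m\circ(f_1,\dots,f_m)\simeq\ast$ and one gets $\D=0$, while the min-depth bound is positive. Your cover only sees the depths of the individual $f_i$, so it cannot detect this.

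On the point you flagged: the series in the proof of Theorem~\ref{thm:whitehead} is built from a \emph{closed} categorical cover $\{A_j\}$. So $[f_i]\in\Gamma_d(\cG)$ gives a null-homotopy only on the closed set $\cA_{d-1}$, not on an open union $U_0\cup\dots\cup U_{d-1}$ as in your Step~2. Pushing that null-homotopy to an open neighbourhood by homotopy extension is not automatic, since neither $G$ nor $\cA_{d-1}$ is assumed to be an ANR.

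The clean fix is to take $A_j=\overline{W_j}$ for an open shrinking $\{W_j\}$ of an open categorical cover; this is possible because an ANR is metrizable, hence normal. Then $W_0\cup\dots\cup W_{d-1}\subseteq\cA_{d-1}$, together with $W_d,\dots,W_n$, is an open cover with $n-d+2$ members that does the job. The paper's own count of closed sets for $\cat(f)$ glosses the same issue.

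Also, the case $d=0$ cannot occur: $\Gamma_1(\cG)=\cG$ forces every depth to be at least $1$, and $d=1$ simply recovers $\D(f_1,\dots,f_m)\le\cat(X)$.
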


As we show in Subsection~\ref{subsec: applications}, this estimate is sharp, and is obtained using the aforementioned interpretation of the homotopic distance in terms of the LS-category and the proof of Whitehead's theorem. Inspired by the latter, we also obtain a version of Whitehead's original theorem for the $m$-th sequential topological complexity $\TC_m$.

\begin{theorem}[Theorem~\ref{thm: tc whitehead}]
    If $G$ is a path-connected topological group and $X$ is an ANR, then the homotopy classes of maps $X^m\to G$ that are null-homotopic on the diagonal 
$\Delta X\subset X^m$ form a nilpotent group of nilpotency class at most $\TC_m(X)$.
\end{theorem}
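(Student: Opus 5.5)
We follow G.~Whitehead's classical argument that $[X,G]$ is nilpotent of class at most $\cat(X)$, replacing the categorical cover of $X$ by a motion-planning cover of $X^m$ for the fibration $e_m\colon X^I\to X^m$.

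\textbf{The subgroup.} Let $\cG_\Delta\subset[X^m,G]$ be the set of classes $[\phi]$ with $\phi|_{\Delta X}\simeq *$. Since $G$ is a topological group, pointwise multiplication and inversion make $[X^m,G]$ a group, and restriction to $\Delta X\cong X$ is a homomorphism $[X^m,G]\to[X,G]$. Hence $\cG_\Delta$ is its kernel, so it is a (normal) subgroup.

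\textbf{The filtration.} Let $n=\TC_m(X)$. Since $X$ is an ANR, so is $X^m$, and there is an open cover $U_0,\dots,U_n$ of $X^m$ such that each $U_j$ admits a continuous section of $e_m$. Equivalently, each inclusion $U_j\hookrightarrow X^m$ is homotopic to a map into $\Delta X$: contract the paths toward one of their points, as in the standard characterization of $\TC_m$. For $\phi\in\cG_\Delta$ this gives
\[
\phi|_{U_j}\simeq (\text{map into }\Delta X)\text{ composed with }\phi\simeq *,
\]
so every element of $\cG_\Delta$ is null on each $U_j$. Following Whitehead, define $\cG_k$ as the set of classes in $[X^m,G]$ that are null-homotopic on $V_k:=U_0\cup\dots\cup U_{k-1}$, for $k=1,\dots,n+1$. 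Since $V_{n+1}=X^m$, we get $\cG_{n+1}=1$.

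\textbf{The key commutator estimate.} The step to establish is
\[
[\cG_\Delta,\cG_k]\subseteq \cG_{k+1}.
\]
Take $\alpha\in\cG_\Delta$ and $\beta\in\cG_k$. Then $\alpha$ is null on $U_k$ and $\beta$ is null on $V_k$. The commutator map $c=\alpha\beta\alpha^{-1}\beta^{-1}$ is null on each of $U_k$ and $V_k$ separately, and the content of the argument is that it is null on their union $V_{k+1}$. This is exactly Whitehead's lemma: a commutator of maps $Y\to G$, one null on $A$ and one null on $B$, is null on $A\cup B$.

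\textbf{The main obstacle.} The difficulty is that the open sets are not CW subcomplexes. We handle this in the standard ANR manner. Use the homotopy extension property to deform $\alpha$ to a map constant at $e$ on a closed neighborhood of the relevant part of $U_k$, and $\beta$ to one constant at $e$ on $V_k$. This requires first shrinking the cover to closed sets, which is possible by normality and the ANR property. Once $\alpha$ is constant on a neighborhood $A$ and $\beta$ is constant on $B$, the commutator $c$ is identically $e$ on $A\cup B$, because each factor pair cancels pointwise. The conclusion is then read off on the shrunken union.

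\textbf{Iterating.} Starting from $\cG_\Delta\subseteq\cG_1$ (null on $U_0$) and applying the commutator estimate repeatedly, every $(n+1)$-fold iterated commutator of elements of $\cG_\Delta$ lies in $\cG_{n+1}=1$. Therefore the lower central series of $\cG_\Delta$ terminates after at most $n$ steps, so $\cG_\Delta$ is nilpotent of class at most $n=\TC_m(X)$.
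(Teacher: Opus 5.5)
Your proposal is correct and follows essentially the same route as the paper. Both arguments use a $\TC_m$-cover of $X^m$ whose pieces deform into $\Delta X$, so every diagonal-null class is null on each piece. Both then use Whitehead's filtration by nullity on the initial unions of the cover, and both obtain the commutator estimate by deforming the two maps to be constant at $e$ on the relevant pieces, so that the commutator is trivial there.

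One small bookkeeping fix is needed. You define $\cG_k$ by nullity on the open sets $V_k$ but read the conclusion off on the shrunken closed union, which is smaller than $V_{k+1}$. Either define the filtration relative to the shrunken closed cover from the start, as the paper does, or do the two-set shrinking of $\{U_k, V_k\}$ inside $V_{k+1}$ itself, so that the shrunken union is all of $V_{k+1}$.
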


The above estimate is a refinement since $\TC_m(X) \leq \cat(X^m)$ always holds, and strict equality is possible for several spaces, most notably for topological groups.

Our last main result is a general cohomological lower bound to the sequential homotopic distance that recovers a similar bound to the ordinary homotopic distance obtained in~\cite{MM}. Note that we also have a distributional version here.

\begin{theorem}[Remark~\ref{rem: dist lower bound} and Corollary~\ref{cor: same lower bound}]
If $R$ is a ring, $f_i\colon X\to Y$ are maps, and $c_i$ are integers for $1\le i \le m$ 
such that $\sum_{i=1}^mc_i=0$, then
\[
\D(f_1,\dots,f_m)\ge \cu_R\left(\Im\left(\sum_{i=1}^mc_if_i^*\right)\right),
\]
and the same lower bound holds for $\dD(f_1,\dots,f_m)$ in rational coefficients if $Y$ is a finite CW complex.
\end{theorem}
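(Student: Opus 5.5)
We prove the ordinary bound first and then deduce the distributional one, following the standard cup-length argument for sectional category. Put $k=\D(f_1,\dots,f_m)$. Then there is an open cover $X=U_0\cup\dots\cup U_k$ such that on each $U_j$ the restrictions $f_1|_{U_j},\dots,f_m|_{U_j}$ are all homotopic to one another. For each $j$, write $\iota_j\colon U_j\hookrightarrow X$ for the inclusion and $g_j$ for the common homotopy class of $f_i\circ\iota_j$. Then $\iota_j^*f_i^*=g_j^*$ for every $i$, and hence
\[
\iota_j^*\Bigl(\sum_{i=1}^m c_if_i^*\Bigr)=\Bigl(\sum_{i=1}^mc_i\Bigr)g_j^*=0,
\]
because $\sum_i c_i=0$. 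So every class $u$ in $\Im\bigl(\sum_i c_if_i^*\bigr)\subseteq H^*(X;R)$ restricts to zero on each $U_j$. We may take $u$ of positive degree, since in degree $0$ the map is zero.

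The second step is the usual relative cup product argument. Suppose $u_0,\dots,u_k$ lie in that image. Each $u_j$ vanishes on $U_j$, so by the long exact sequence of the pair it lifts to some $\bar u_j\in H^*(X,U_j;R)$. Then $u_0\cdots u_k$ is the image of $\bar u_0\cdots\bar u_k\in H^*(X,\bigcup_j U_j;R)=H^*(X,X;R)=0$. To make the relative cup product available we replace the open sets by excisive couples; for open sets in a space this is fine with singular cohomology. Hence every product of $k+1$ elements of the image vanishes, which gives $\cu_R\le k$. This is exactly the argument of \cite{MM} for two maps with $c=(1,-1)$, and the same computation goes through for $m$ maps.

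For the distributional version, $\dD(f_1,\dots,f_m)=k$ does not provide an open cover. Instead, I expect the definition (as recalled in Subsection~\ref{subsec: secat}) to say that a certain map $\Phi\colon X\to B_{k+1}(\mathcal P)$ exists, lifting to the space of probability measures with at most $k+1$ support points on a suitable path space $\mathcal P$. That path space should be the homotopy pullback that measures homotopies between the $f_i$, i.e.\ paths in $Y^m$ or maps from a star-shaped graph. The plan is to pass through the $(k+1)$-fold symmetric product or the unordered configuration structure. We apply the known transfer argument, as in the proof of the $\dcat$ and $\dTC$ cup-length bounds in \cite{DJ,KW}. Over $\Q$, a map to the measure space gives a "$(k+1)$-multivalued section" of the fibration $\mathcal P\to X$. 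Using a transfer, this gives a map to the symmetric product $SP^{k+1}$ whose composite with the fibration is determined by $X$. Rationally the cohomology of $SP^{k+1}$ is the invariants of the $(k+1)$-fold tensor power, so classes that pull back to zero on the fiber-side factor have products of length $k+1$ vanishing. Finiteness of $Y$ is what keeps the rational cohomology finite-dimensional, so these Künneth and transfer identifications hold. Concretely, the classes $\sum c_if_i^*(y)$ pull back to zero on $\mathcal P$, because on $\mathcal P$ all the $f_i$ become homotopic and $\sum c_i=0$. So I would reduce to the general statement that $\dsecat$ of a fibration $p$ is at least the rational cup length of $\ker p^*$, which I will take from the cited literature.

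The main obstacle is this distributional step. It requires setting up the correct fibration whose distributional sectional category equals $\dD$, and checking that the transfer argument actually needs only rational coefficients and a finite-CW hypothesis on $Y$. The ordinary case is routine.
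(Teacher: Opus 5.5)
Your argument for $\D$ is correct and is the paper's own (Remark~\ref{rem: dist lower bound}). Each class in $\Im(\sum_i c_if_i^*)$ dies on every open $U_j$, so it lifts to $H^*(X,U_j;R)$, and the relative cup product of $k+1$ such lifts lies in $H^*(X,X;R)=0$.

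The distributional half, however, is a plan rather than a proof, and the mechanism you sketch does not work. A section $s\colon X\to Q_{k+1}$ of $q^X_{k+1}$ does not give a continuous map into a (fiberwise) symmetric product, because a support point disappears when its weight tends to $0$. Moreover, if a global map of that kind did exist, your transfer argument would force every rational class in $\ker(q^X)^*$ to vanish outright. So the sketch never explains where the number $k+1$ comes from.

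The missing idea is to stratify $X$ by support size, $A_j=\{x\mid |\supp(s(x))|=j\}$ for $1\le j\le k+1$. On each stratum one pushes the support paths into $SP^{N}(Y)$, with $N=(k+1)!$, using multiplicity $N/j$. This gives homotopies $\delta_N\circ f_i|_{A_j}\simeq \delta_N\circ f_{i+1}|_{A_j}$ (Lemma~\ref{lem: cover}). Because the $A_j$ are neither open nor closed, the relative cup-product argument must then be run in Alexander--Spanier cohomology. That yields the bound for $\Im(\sum_i c_i(\delta_N\circ f_i)^*)$ with arbitrary coefficients (Theorem~\ref{thm: lower bound}).

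Finiteness of $Y$ enters only at the very end, and not for K\"unneth or transfer identifications on the path space as you suggest; the path space is infinite-dimensional whatever $Y$ is. Its role is to make $\delta_N^*\colon H^*(SP^N(Y);\Q)\to H^*(Y;\Q)$ surjective, so that $\Im(\sum_i c_if_i^*)=\Im(\sum_i c_i(\delta_N\circ f_i)^*)$ (Corollary~\ref{cor: same lower bound}).

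Your observation $\Im(\sum_i c_if_i^*)\subseteq\ker(q^X)^*$ is correct. A general bound $\dsecat(p)\ge\cu_\Q(\ker p^*)$ could plausibly be proved by the same stratification together with a rational transfer for fiberwise symmetric products of the total space. But that is exactly the content of the distributional claim, and you leave it unproved. Note also that a lower bound for the analog invariant would not suffice here, since $\dsecat\le\mathsf{asecat}$.
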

See Theorem~\ref{thm: lower bound} for a general cohomological lower bound to $\dD(f_1,\dots,f_m)$, whose proof involves the use of the (cohomology of) symmetric products of spaces. We also give several instances of the sharpness of these bounds in the paper and illustrate how they recover previously known cohomological lower bounds. 

In the rest of the paper, we study several properties and examples of (distributional) homotopic distance and diagonal (distributional) homotopic distance, their relationships with each other and with other \emph{sectional category}-type invariants, and their behavior under composition and looping of maps. As remarked earlier, our proofs use global homotopic methods rather than local ones.

\section{Basics}\label{sec:basics}
We use this section to revisit general facts about various sectional category-type invariants of our interest, and to make some topological observations.

\subsection{Concatenation of paths}\label{subsec: concat}
Given paths $f,g\in X^I$ for which $f(1)=g(0)$, the usual concatenation $f\cdot g\in X^I$ 
of $f$ and $g$ is the path
\[
f\cdot g(t)=\begin{cases}
    f(2t) & \text{ if }0\le t\le \tfrac{1}{2}
    \\
    g(2t-1) & \text{ if }\tfrac{1}{2}\le t\le 1.
\end{cases}
\]
In this paper, we will also use a more general (but also somewhat specific) concatenation 
of two or more paths, which we now describe.

For a space $X$, integer $m \ge 2$, and real numbers $a_i \in (1,\infty)$ such that $a_i > a_{i+1}$ for $1 \le i < m-1$, let
$$
T_{m}(X) = \left\{\left(f_{1}, \ldots, f_{m} \right) \in (X^I)^{m} \mid f_{i}(1) = f_{i+1}(0) \text{ for all } 1 \le i \le m-1 \right\},
$$
and let $\Theta_{m}: T_{m}(X) \to X^I$ be defined as $\Theta_{m}\left(f_{1}, \ldots, f_{m} \right) = f_{1} \star \cdots \star f_{m}$, where
$$
\left(f_{1} \star \cdots \star f_{m}\right)(t) = \begin{cases}
    f_{1}(a_{1}t) & \text{ if } 0 \le t \le \frac{1}{a_{1}} \\
    f_{2}\left(\frac{a_{2}(a_{1}t-1)}{a_{1}-a_{2}}\right) & \text{ if } \frac{1}{a_{1}} \le t \le \frac{1}{a_{2}}
    \\
    \hspace{6mm} \vdots & \hspace{8mm} \vdots 
    \\
    f_{m-1}\left(\frac{a_{m-1}(a_{m-2}\hspace{0.5mm} t-1)}{a_{m-2}-a_{m-1}}\right) & \text{ if } \frac{1}{a_{m-2}} \le t \le \frac{1}{a_{m-1}} 
    \\
    f_{m}\left(\frac{1-a_{m-1}\hspace{0.5mm} t}{1-a_{m-1}}\right) & \text{ if } \frac{1}{a_{m-1}} \le t \le 1.
\end{cases}
$$
It is easy to see using the definition of the compact-open topology on $X^I$ that the function $\Theta_{m}\colon T_m(X)\to X^m$ is continuous for each $m$, see~\cite[Lemma~2.1]{Ja1}. This concatenation operation will be quite useful for us.

\subsection{(Distributional) sectional category}\label{subsec: secat}
Suppose $p\colon E\to B$ is a (Hurewicz) fibration. Recall that the \emph{sectional category} of $p$, denoted $\secat(p)$, is the least non-negative integer $n$ such that $B$ can be covered by $n+1$ open subsets $U_j$ over each of which there exists a partial section of $p$, see~\cite{Sch}.

Let $\ast^k_BE$ denote the $k$-th iterated fiberwise join of $E$, i.e., 
\[
\ast^k_BE:=\left\{\lambda_1e_1+\cdots+\lambda_ke_k\ \middle| \ \sum_{i=1}^k\lambda_i=1,\lambda_i\ge 0, p(e_i)=p(e_j) \text{ for all }i,j \right\},
\]
and define the $k$-th fibration $\ast^k_Bp\colon \ast^k_BE\to B$ as $\ast^k_Bp(\lambda_1e_1+\cdots+\lambda_ke_k)=p(e_i)$. Of course, $\ast^k_BE$ is topologized with the subspace topology of the $k$-th iterated join $\ast^kE$, see also~\cite{CLOT}. Schwarz proved in~\cite{Sch} (see also~\cite{Jam}) that when $B$ is paracompact, $\secat(p)$ coincides with the smallest $n$ such that the $(n+1)$-th fibration $\ast^{n+1}_Bp$ admits a (continuous) section. This characterization, in part, motivates the following probabilistic constructions and definitions.

Given a fibration $p\colon E\to B$ and integer $k\ge 1$, let $\B_k(E)$ be the space of probability measures on $E$ having support at most $k$, and let $E_n(p)$ be the associated fiberwise subspace, i.e.,
\[
E_n(p):=\left\{\sum_{i=1}^k\lambda_i e_i \ \middle| \ \sum_{i=1}^k\lambda_i=1,\lambda_i\ge 0, p(e_i)=p(e_j)\text{ for all } i,j  \right\}.
\]
We note that the elements of $\ast^k_BE$ are \emph{ordered} formal linear combinations, while those of $E_n(p)$ are \emph{unordered} such linear combinations identified further by the relation $\lambda_1e+\lambda_2e\sim (\lambda_1+\lambda_2)e$ and hence written as probability measures with support size $|\supp(\mu)|\le k$. In particular, $E_n(p)$ has the subspace topology of the measure space $\B_k(E)$. The latter has a number of topologies to choose from; among them is the quotient topology coming from the topology of the symmetric join $\ast^kE/\Sigma_k$ (see, for instance,~\cite{KK}) and the metric topology coming from the L\'evy--Prokhorov (or the $1$-Wasserstein distance) metric --- see~\cite{DJ,KW,Dr} for related discussions and references. 

There is a natural map $p_k\colon E_k(p)\to B$ given by $p_k(\sum\lambda_ie_i)=p(e_i)$, which is, in fact, a fibration (see~\cite{DJ}). Then, in analogy with the equivalent definition of $\secat(p)$, one defines the \emph{distributional sectional category}\footnote{Strictly speaking, this invariant is called such when $E$ is metrizable and $\B_k(E)$ has the metric topology mentioned above. Otherwise, when $\B_k(E)$ has the quotient topology mentioned above, then such an invariant is called \emph{analog sectional category} in~\cite{KW,KW2} and denoted $\mathsf{asecat}(p)$. The inequality $\dsecat(p)\le\mathsf{asecat}(p)$ is known~\cite{Ja1,KW2}, and equality is conjectured. The results we prove in this paper are valid for the analog and the distributional invariants both; for simplicity and concreteness, we work only with the latter throughout the paper.} of $p$, denoted $\dsecat(p)$, as the least non-negative integer $n$ such that the $(n+1)$-th fibration $p_{n+1}$ admits a (continuous) section, see~\cite{DJ,KW,Ja1}. It can be shown from~\cite{KK} that 
\[
\dsecat(p)\le\secat(p)
\]
when $B$ is paracompact, see also~\cite{Dr,JO} and the references therein.

Moving forward, for simplicity, we will denote the fiberwise objects with $\ast^k_{\bullet}E$ and $\ast^k_{\bullet}p$ to avoid writing the base space when the context is clear, and their respective distributional (probabilistic) analogues with simply $E_n$ and $p_n$.

The fiberwise and the distributional constructions are both well-behaved under taking pullbacks and, more generally, with respect to maps between fibrations, see, for instance,~\cite{Sch,CLOT,DJ,KW,KW2,JO}. More precisely, if $p\colon E\to B$ and $p'\colon E'\to B'$ are fibrations with maps $f\colon E\to E'$ and $g\colon B'\to B$ such that $p'\circ f=g\circ p$, then for each $k\ge 1$, we have ``fiberwise joined maps'' 
\[
\ast^k_{\bullet}f\colon \ast^k_\bullet E\to \ast^k_\bullet E',\ \ \ast^k_{\bullet}p\colon \ast^k_\bullet E\to B,\  \text{ and } \ \ast^k_{\bullet}p'\colon \ast^k_\bullet E'\to B'
\]
such that $\ast^k_\bullet p'\circ \ast^k_\bullet f=g\circ \ast^k_\bullet p$. Similarly, there are the respective ``distributional maps'' $f_k\colon E_k\to E'_k$, $p_k\colon E_k\to B$, and $p_k'\colon E_k'\to B'$ such that $p_k'\circ f_k=g\circ p_k$.

\subsection{Categorical and complexity-type invariants}
Let us consider two specific fibrations. Let $(Y,y_0)$ be a pointed topological space, $P_0(Y)\subset Y^I$ be the space of paths with endpoint $y_0$, and $m\ge 2$ be an integer with an associated \emph{time schedule} 
\[
0=t_1<t_2<\cdots<t_{m-1}<t_m=1.
\]
Let $p^Y_m\colon P_0(Y)\to Y^{m-1}$ be the fibration that evaluates each $\phi\in Y^I$ at timestamps $t_i$ for $1\le i \le m-1$, and let $\pi_m^Y\colon Y^I\to Y^m$ be the fibration that evaluates each $\phi\in Y^I$ at the timestamps $t_i$ for $1\le i \le m$. We will follow this particular time schedule and the associated definition of $\pi^Y_m$ throughout the paper.

Suppose $f\colon (X,x_0)\to (Y,y_0)$ is a map, and for $r\ge 1$, let $f^r\colon X^r\to Y^r$ be the power map $f^r(x_1,\dots,x_r)=(f(x_1),\dots,f(x_r))$. The \emph{(distributional) Lusternik--Schnirelmann (LS) category} of $f^{m-1}$, denoted $\sd\cat(f^{m-1})$, is defined as 
\[
\sd\cat(f^{m-1}):=\sd\secat((f^{m-1})^*p^Y_m), 
\]
where $*$ in the superscript symbolizes a pullback, see~\cite{Jam,CLOT,DJ,KW,Dr,Ja1}. Similarly, the $m$\emph{-th sequential (distributional) topological complexity} of $f$, denoted $\sd\TC_m(f)$, is defined as 
\[
\sd\TC_m(f):=\sd\secat((f^m)^*\pi_m^Y),
\]
see~\cite{Far,Rud,BGRT,Sco,Kua,DJ,KW,Ja1,Ja2}. It follows that $\sd\cat(f^{m-1})\le\sd\TC_m(f)\le\sd\cat(f^m)$ for each $m\ge 2$.

If $Y$ is a connected, locally path-connected, and semi-locally simply connected space, then it admits a universal covering $\wt p^Y\colon \wt Y\to Y$. The \emph{(distributional) one-category} of $f$, denoted $\sd\cat_1(f)$, is defined as
\[
\sd\cat_1(f):=\sd\secat(f^*\wt p^Y),
\]
see~\cite{CLOT,Op,OS,JO}. Next, set $\pi=\pi_1(Y_,y_0)$, and let $\wh\pi^Y_m\colon \wh Y^m\to Y^m$ be a connected covering corresponding to the diagonal subgroup $\Delta\subset \pi^m$. Here, $\wh Y^m$ is the orbit space of the free diagonal action of $\pi$ on $\widetilde Y \times \cdots \times \widetilde Y=\Wi{Y}^m$, where the product is taken $m$-times. Then the $m$\emph{-th sequential diagonal (distributional) topological complexity} of $f$, denoted $\sd\TC^{\cD}_m(f)$, is defined as 
\[
\sd\TC^{\cD}_m(f):=\sd\secat((f^m)^*\wh\pi_m^Y),
\]
see~\cite{FGLO1,FGLO2,PS,JO}. Again, we have for each $m\ge 2$ that $\sd\cat_1(f^{m-1})\le\sd\TC^{\cD}_m(f)\le\sd\cat_1(f^m)$. Also, we have $\sd\cat_1(f)\le\sd\cat(f)$ and $\sd\TC_m^{\cD}(f)\le \sd\TC_m(f)$; see Section~\ref{sec:dhd} for details.

\section{Sequential (distributional) homotopic distance}\label{sec: homotopic distance}
Let $m\ge 2$ be an integer and $f_i\colon X\to Y$ be continuous maps. Then the $m$\emph{-th sequential homotopic distance} between $\{f_i\}_{i=1}^m$, denoted $\D(f_1,\ldots,f_m)$, is defined as the least non-negative integer $n$ such that $X$ can be covered by $n+1$ open sets $U_j$ such that for each $j$, $f_{i|U_j}\simeq f_{k|U_j}$ for $1\le i,k\le m$, see~\cite{MM}.

It is clear that $\D(f_1,\ldots,f_m)=0$ if and only if $f_i\simeq f_k$ for all $i,k$. Essentially, $\D(f_1,\ldots,f_m)$ gives a numerical measure of how close the maps $f_1,\ldots,f_m$ are to being mutually homotopic. Note also that $\D(f_1,\ldots,f_m)=\D(f_{\sigma(1)},\dots,f_{\sigma(m)})$ for any permutation $\sigma\in S_m$ on $m$ symbols.

Let $(f_1,\dots, f_m)\colon X\to Y^m$ be the map $(f_1,\dots,f_m)(x)=(f_1(x),\dots,f_m(x))$. We will follow this notation throughout the paper. 

The following was observed in~\cite{MM,BV}; we record a simple proof.

\begin{lemma}\label{lem: dist as secat}
    Given $m\ge 2$ and maps $f_i\colon X\to Y$ for $1\le i \le m$, we have that $\D(f_1,\dots,f_m)=\secat(q^X)$, where $q^X$ is the pullback $(f_1,\dots,f_m)^*\pi^Y_m$ as follows:
\begin{equation}\label{diag: main pullback}
\xymatrix@C=2cm{
Q \ar[r]^-{} \ar[d]_-{q^X} & Y^I \ar[d]^-{\pi^Y_m} \\
X \ar[r]^-{(f_1,\dots,f_m)} & Y^m.
}   
\end{equation}
\end{lemma}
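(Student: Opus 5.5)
The plan is to compare, for an arbitrary open set $U\subseteq X$, two conditions: (a) $f_{i|U}\simeq f_{k|U}$ for all $1\le i,k\le m$, and (b) $q^X$ admits a continuous local section over $U$. Once (a) and (b) are shown to be equivalent for every open $U$, the two minimisation problems defining $\D(f_1,\dots,f_m)$ and $\secat(q^X)$ range over exactly the same open covers, so the equality $\D(f_1,\dots,f_m)=\secat(q^X)$ is immediate. No paracompactness is needed, because we only use the open-cover definition of $\secat$.

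The first step is to use the universal property of the pullback~\eqref{diag: main pullback}. A local section of $q^X$ over $U$ is the same thing as a continuous map $H\colon U\to Y^I$ with
\[
\pi^Y_m\circ H=(f_1,\dots,f_m)_{|U}.
\]
By the exponential law (the interval $I$ is locally compact), such an $H$ is the same as a homotopy $\wt H\colon U\times I\to Y$ with $\wt H(x,t_i)=f_i(x)$ for each timestamp $t_i$ of the fixed schedule $0=t_1<\cdots<t_m=1$.

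The direction (b)$\Rightarrow$(a) is then easy. Restricting $\wt H$ to $U\times[t_i,t_{i+1}]$ and reparametrising linearly gives a homotopy $f_{i|U}\simeq f_{i+1|U}$ for each $i$. Since homotopy is transitive, all the restrictions $f_{i|U}$ are mutually homotopic.

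For (a)$\Rightarrow$(b), choose homotopies $G_i\colon U\times I\to Y$ from $f_{i|U}$ to $f_{i+1|U}$ for $1\le i\le m-1$. Define $\wt H$ on $U\times[t_i,t_{i+1}]$ by $\wt H(x,t)=G_i\bigl(x,(t-t_i)/(t_{i+1}-t_i)\bigr)$. The pieces agree at the breakpoints, since both give $f_{i+1}(x)$ there, so the pasting lemma on the finite closed cover $\{U\times[t_i,t_{i+1}]\}$ makes $\wt H$ continuous. Its adjoint $H$ is the required section. Equivalently, one can write $H=\Theta_m\circ(G_1^\sharp,\dots,G_{m-1}^\sharp)$ with parameters matched to the schedule, using the continuity of $\Theta_m$ from Subsection~\ref{subsec: concat}.

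The only mildly delicate points are the continuity of adjoints and the matching of the concatenation parameters with the timestamps $t_i$. The pasting argument above sidesteps the latter entirely, so I expect no real obstacle.
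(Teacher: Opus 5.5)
Your proposal is correct and follows the paper's proof essentially step for step. For each open $U\subseteq X$, you build a partial section of $q^X$ over $U$ by concatenating homotopies $f_{i|U}\simeq f_{i+1|U}$ so that the $i$-th one occupies $[t_i,t_{i+1}]$, and conversely you recover these homotopies by restricting the section's paths to $[t_i,t_{i+1}]$ and reparametrising; your explicit piecewise-linear pasting is exactly the paper's $\star$-concatenation with $a_i=1/t_{i+1}$.
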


\begin{proof}
We have $Q=\{(x,\alpha)\in X\times Y^I\mid \alpha(t_i)=f_i(x)\ \text{for all}\ i\}$. Fix a subset $U\subset X$. Suppose $f_{i|U}\simeq f_{k|U}$ for all $i$ and $k$. Then there exists a homotopy $H_i\colon U\to Y^I$ from $f_i$ to $f_{i+1}$ for $1\le i \le m-1$. Define a map $H\colon U\to Y^I$ such that for each $x\in U$, the image $H(x)$ is the path concatenation
\[
H(x)=H_1(x)\star H_2(x)\star\cdots\star H_{m-1}(x)
\]
from Subsection~\ref{subsec: concat}, where we set $a_i=\tfrac{1}{t_{i+1}}$ for $1\le i\le m-1$. Then this path satisfies $H(x)(t_i)=f_i(x)$ for $1\le i \le m$, so that the map $\tau\colon U\to Q$ given by $\tau(x)=(x,H(x))$ is a partial section of $q^X$. Conversely, suppose $\rho\colon U\to Q$ is a partial section of $q^X$ and $\rho(x)=(x,\alpha_x)$ for each $x$. For $1\le i\le m-1$, define $H_i\colon U \times I\to Y$ as
\[
H_i(x,t)=\alpha_x(t(t_{i+1}-t_i)+t_i).
\]
This shows that $f_{i|U}\simeq f_{i+1|U}$ for each $i$.
\end{proof}

Inspired by Lemma~\ref{lem: dist as secat}, we define the $m$\emph{-th sequential distributional homotopic distance} between maps $\{f_i\}_{i=1}^m$, denoted $\dD(f_1,\dots,f_m)$, as
\[
\dD(f_1,\dots,f_m):=\dsecat((f_1,\dots,f_m)^*\pi^Y_m).
\]

\subsection{Standard properties}
Let us now explain how sequential (distributional) homotopic distance between maps recovers previously studied invariants (distributional) LS-category and sequential (distributional) topological complexity of a map. In the original setting, such results for the identity map appear in~\cite{MM,BV}.

\begin{proposition}\label{prop: recovering tcf}
    Let $X$ be a space and let $\pr_i\colon X^m\to X$ be the projection onto the $i$-th coordinate. If $f\colon X\to Y$ is a map, then 
    \[
    \sd\cat(f)=\sd\D(f,\ast)=\sd\D(\ast,f),
    \]
    where $\ast\colon X\to Y$ is the constant map to a fixed basepoint $y_0\in Y$, and
    \[
    \sd\TC_m(f)=\sd\D(f\circ \pr_1,\dots,f\circ\pr_m).
    \]
\end{proposition}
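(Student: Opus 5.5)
We argue entirely at the level of fibrations, by identifying pullbacks. Both $\sd\cat$ and $\sd\TC_m$ are defined as $\sd\secat$ of explicit pullbacks, and $\sd\D$ is $\sd\secat$ of $(f_1,\dots,f_m)^*\pi^Y_m$. So it suffices to show that the relevant fibrations over the same base are isomorphic, or at least fiber homotopy equivalent over the base. The fiberwise join and distributional constructions $\ast^k_\bullet$ and $(\cdot)_k$ are functorial for maps of fibrations over a fixed base, as recalled in Subsection~\ref{subsec: secat}. Hence mutually inverse fiberwise maps (or fiber homotopy equivalences) transport sections of the $(n+1)$-th joins or measure fibrations in both directions, and both $\secat$ and $\dsecat$ agree.

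For the TC statement, take $m$ maps $f\circ\pr_i\colon X^m\to Y$. Then $(f\circ\pr_1,\dots,f\circ\pr_m)=f^m\colon X^m\to Y^m$ literally. Therefore $(f\circ\pr_1,\dots,f\circ\pr_m)^*\pi^Y_m=(f^m)^*\pi^Y_m$ as the same pullback. By definition, this gives $\sd\D(f\circ\pr_1,\dots,f\circ\pr_m)=\sd\secat((f^m)^*\pi^Y_m)=\sd\TC_m(f)$. No work is needed here beyond unwinding definitions.

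For the category statement, take $m=2$, so the time schedule is $0=t_1<t_2=1$. Then $\pi^Y_2(\alpha)=(\alpha(0),\alpha(1))$, and $(f,\ast)^*\pi^Y_2$ has total space
\[
Q=\{(x,\alpha)\in X\times Y^I\mid \alpha(0)=f(x),\ \alpha(1)=y_0\}.
\]
On the other side, $p^Y_2\colon P_0(Y)\to Y$ evaluates at $t_1=0$. So $f^*p^Y_2$ has total space $\{(x,\alpha)\mid \alpha\in P_0(Y),\ \alpha(0)=f(x)\}$, with $P_0(Y)$ the paths ending at $y_0$. These are the same space over $X$, which gives $\sd\cat(f)=\sd\D(f,\ast)$. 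For $\sd\D(\ast,f)$, use the path-reversal homeomorphism $\alpha\mapsto\bar\alpha$, $\bar\alpha(t)=\alpha(1-t)$, on $Y^I$. It covers the swap $Y^2\to Y^2$ and carries $(\ast,f)^*\pi^Y_2$ isomorphically onto $(f,\ast)^*\pi^Y_2$ over $X$. Alternatively, one can cite the permutation invariance $\D(f_1,f_2)=\D(f_2,f_1)$, but for the distributional case this reversal isomorphism is the cleaner argument.

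The only point needing care is in the distributional setting: an isomorphism of fibrations over $X$ must induce homeomorphisms $E_k\cong E'_k$ compatible with the projections. This follows from the naturality of $(\cdot)_k$ applied to the isomorphism and its inverse. Since $(g\circ h)_k=g_k\circ h_k$, the induced maps are mutually inverse. The main (mild) obstacle is therefore bookkeeping about the conventions: where $P_0(Y)$ has its endpoint, and which timestamps $p^Y_m$ uses. I would check these carefully so that the identification in the category case is an equality rather than merely a reversal.
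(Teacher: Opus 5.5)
Your proof is correct and follows the paper's argument: both identify the total space of $(f,\ast)^*\pi^Y_2$ literally with that of $f^*p^Y_2$, and both observe that $(f\circ\pr_1,\dots,f\circ\pr_m)=f^m$, so the relevant pullbacks coincide. The only difference is that you handle $\sd\D(\ast,f)$ explicitly via the path-reversal isomorphism $(\ast,f)^*\pi^Y_2\cong(f,\ast)^*\pi^Y_2$ over $X$, whereas the paper just says this case is entirely analogous.
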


\begin{proof}
    First, note that the pullback space corresponding to $(f,\ast)^*\pi^Y_2$ is
    \begin{align*}
        A & = \left\{ (x,\alpha)\in X\times Y^I\ \middle|\ \alpha(0)=f(x),\alpha(1)=y_0 \right\}
        \\
        &=\left\{ (x,\alpha)\in X\times P_0(Y)\ \middle|\ \alpha(0)=f(x) \right\}.
    \end{align*}
    Then $A$ is also the pullback space corresponding to $f^*p^Y_2$. In particular, the pullbacks $(f,\ast)^*\pi^Y_2$ and $f^*p^Y_2$ coincide. Therefore, by definition, we have that $\sd\D(f,\ast)=\sd\secat((f,\ast)^*\pi^Y_2)=\sd\secat(f^*p^Y_2)=\sd\cat(f)$. An entirely analogous argument yields the other equality $\sd\D(\ast,f)=\sd\cat(f)$.
    
    Next, since $(f\circ \pr_1,\dots,f\circ\pr_m)=f^m$, the pullbacks $(f\circ \pr_1,\dots,f\circ\pr_m)^*\pi^Y_m$ and $(f^m)^*\pi^Y_m$ coincide.
    Therefore, we get $\sd\TC_m(f)=\sd\secat((f^m)^*\pi^Y_m)=\sd\secat((f\circ \pr_1,\dots,f\circ\pr_m)^*\pi^Y_m)=\sd\D(f\circ \pr_1,\dots,f\circ\pr_m)$ by definition.
\end{proof}

Because $\sd\secat(f^*p)\le\sd\secat(p)$ for any fibration $p\colon E\to B$ and map $f\colon A\to B$ (see~\cite{Sch,JO}), we have $\sd\D(f_1,\dots,f_m)\le\sd\TC_m(Y)$ for maps $f_i\colon X\to Y$ by definition. This upper bound can be improved as follows.

\begin{proposition}\label{prop: (co)domain}
    For $m\ge 2$ and based maps $f_i\colon (X,x_0)\to (Y,y_0)$ for $1\le i \le m$, we have that
    \[
    \sd\D(f_1,\dots,f_m)\le\min\left\{\sd\cat(X),\ \prod_{i=1}^m\left(\sd\cat(f_i)+1\right)-1\right\}.
    \]
\end{proposition}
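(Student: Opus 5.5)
The two bounds in the minimum come from different mechanisms, so I prove them separately. The first, $\sd\D(f_1,\dots,f_m)\le\sd\cat(X)$, comes from factoring the pullback fibration through the path fibration of $X$. The second, the product bound, comes from combining categorical sections of the individual $f_i$ over a product-indexed family.

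\emph{Domain bound.} Write $q^X=(f_1,\dots,f_m)^*\pi^Y_m$ as in Lemma~\ref{lem: dist as secat}, and let $\ast\colon X\to X$ be the constant map at $x_0$. By Proposition~\ref{prop: recovering tcf}, $\sd\cat(X)=\sd\secat(P_0(X)\to X)$, the fibration $\alpha\mapsto\alpha(0)$.
\begin{itemize}
\item I define a fibrewise map from $P_0(X)\to X$ to $q^X$ by
\[
\alpha\longmapsto\bigl(\alpha(0),\ \gamma_\alpha\bigr),
\]
where $\gamma_\alpha$ runs through $f_1\circ\bar\alpha$, then $f_2\circ\alpha$, then $f_2\circ\bar\alpha$, then $f_3\circ\alpha$, and so on, concatenated with $\Theta_m$. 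Here $\bar\alpha$ is $\alpha$ reversed, and the concatenation is parametrised so that $\gamma_\alpha(t_i)=f_i(\alpha(0))$.
\item This works because the based condition gives $f_i(x_0)=y_0$ for all $i$, so the pieces match at the common point $y_0$.
\item This map is continuous by the continuity of $\Theta_m$ and commutes with the projections to $X$.
\item A fibrewise map over $\id_X$ induces maps $E_k\to Q_k$ and $\ast^k_\bullet E\to\ast^k_\bullet Q$ over $X$. Hence any (distributional) section of the $(n+1)$-th object for $P_0(X)$ pushes forward to one for $q^X$.
\end{itemize}
This gives $\sd\D\le\sd\cat(X)$ in both the ordinary and distributional settings at once.

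\emph{Product bound.} I would use $\sd\cat(f_i)=\sd\secat(f_i^*p^Y_2)$, with total space $\{(x,\beta)\mid \beta(0)=f_i(x),\ \beta(1)=y_0\}$.
\begin{itemize}
\item \emph{Ordinary case.} Take open covers $\{U^i_{j}\}_{j=0}^{n_i}$ with $f_i|_{U^i_j}\simeq\ast$, where $n_i=\cat(f_i)$. The intersections $U_{j_1}^1\cap\dots\cap U^m_{j_m}$ form a cover with $\prod(n_i+1)$ members. On each intersection every $f_i$ is null-homotopic, hence all the $f_i$ are homotopic to each other there, so $\D\le\prod(n_i+1)-1$.
\item \emph{Distributional case.} Given sections $s_i$ of $(f_i^*p^Y_2)_{n_i+1}$, write $s_i(x)=\sum_j\lambda^i_j(x)\beta^i_j(x)$. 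I form the product measure $\sum_{j_1,\dots,j_m}\prod_i\lambda^i_{j_i}(x)\,\gamma_{j_1\cdots j_m}(x)$, where $\gamma_{j_1\cdots j_m}$ is the concatenation via $\Theta_m$ of $\beta^1_{j_1}$, $\overline{\beta^2_{j_2}}$, $\beta^2_{j_2}$, $\overline{\beta^3_{j_3}}$, and so on. This has support at most $\prod(n_i+1)$ and lies over $x$ in $q^X$.
\end{itemize}

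\emph{Main obstacle.} The hard step is continuity of this product-of-measures construction in the chosen topology on $\B_k$. The issue arises when weights tend to $0$ or support points collide, because the map $(\mu_1,\dots,\mu_m)\mapsto$ (pushforward of $\mu_1\otimes\cdots\otimes\mu_m$ under a continuous concatenation map) must be continuous. I would handle it by checking continuity in the L\'evy--Prokhorov/Wasserstein metric, using that the product of measures is jointly continuous and that pushforward under a continuous map is continuous. Alternatively, I would cite the analogous product-type inequality from~\cite{DJ,Ja1}.
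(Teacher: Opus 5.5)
Your proposal is correct and is essentially the paper's proof. The domain bound pushes a (distributional) section of $P_0(X)\to X$ forward along the fibrewise map $\alpha\mapsto(\alpha(0),\gamma_\alpha)$. The product bound puts product weights on the concatenations $(\beta^1\cdot\overline{\beta^2})\star\cdots\star(\beta^{m-1}\cdot\overline{\beta^m})$. Both are exactly the paper's constructions, and your intersection-of-open-covers argument for the ordinary product bound is just the classical form of the paper's remark that the same construction works in iterated fiberwise joins.

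One orientation slip needs fixing. Since $\alpha(1)=x_0$, the pieces of $\gamma_\alpha$ must be $f_1\circ\alpha,\ f_2\circ\bar\alpha,\ f_2\circ\alpha,\ f_3\circ\bar\alpha,\dots,f_m\circ\bar\alpha$, which is the orientation you correctly used in the product case. As written, $\gamma_\alpha(0)=y_0$ and consecutive pieces do not meet.
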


The proof of this proposition is constructive, so we show it below only for the (new) distributional invariants using measures of paths, noting that the same proof using instead ordered linear combinations of paths (in the setting of the iterated fiberwise join) will yield the corresponding inequalities for the original invariants.
\begin{proof}
    Let $\dcat(X)=n$, so that there exists a map $s\colon X\to (P_0(X))_{n+1}$. Given some $x\in X$ and $\phi\in \supp(s(x))$ with corresponding weight $\lambda_\phi$, define a path $\gamma_{i}:=f_i\circ\phi\in Y^I$ for $1\le i \le m$, and the concatenated path
    \[
\psi_{\phi}:=\bigl(\gamma_{1}\cdot\ov{\gamma_{2}}\bigr)\star\bigl(\gamma_{2}\cdot\ov{\gamma_{3}}\bigr)\star\cdots\star\bigl(\gamma_{{m-1}}\cdot\ov{\gamma_{m}}\bigr),
    \]
    where $\cdot$ denotes the usual concatenation of paths, and the concatenation using $\star$ is from Subsection~\ref{subsec: concat}, where, as before, we set $a_i=\tfrac{1}{t_{i+1}}$ for $1\le i \le m-1$. Then the map $H\colon X\to Q_{n+1}$ given by 
    \[
H(x)=\left(x,\sum_{\phi\in\supp(s(x))}\lambda_{\phi}\psi_{\phi}\right)
    \]
    is a section of the fibration $q^X_{n+1}$ induced by the fibration $q^X=(f_1,\dots,f_m)^*\pi^Y_m$ described in~\eqref{diag: main pullback}. This proves that $\dD(f_1,\dots,f_m)\le n=\dcat(X)$.

    Next, suppose $\dcat(f_i)=n_i$ for each $1\le i \le m$, so that there exist $m$ maps $s_i\colon X\to (P_0(Y))_{n_i+1}$. Given $x\in X$ and $\phi_i\in \supp(s_i(x))$ with corresponding weight $\lambda_{\phi_i}$ for each $i$, define $\phi_{1,\dots,m}\in Y^I$ as the concatenation
    \[
\phi_{1,\dots,m}:=\bigl(\phi_{1}\cdot\ov{\phi_{2}}\bigr)\star\bigl(\phi_{2}\cdot\ov{\phi_{3}}\bigr)\star\cdots\star\bigl(\phi_{{m-1}}\cdot\ov{\phi_{m}}\bigr).
    \]
Because $|\supp(s_i)(x)| \leq n_i+1$, there are $n=\prod_{i=1}^m(n_i+1)$ ways to choose
a path $\phi_{1,\ldots,m}$, and we can assemble them all in a map $H\colon X\to Q_{n}$ given by 
    \[
H(x)=\left(x,\sum_{(\phi_1,\dots,\phi_m)\in(\supp(s_1(x)),\dots,\supp(s_m(x)))}\left(\prod_{i=1}^m\lambda_{\phi_i}\right)\phi_{1,\dots,m}\right).
    \]
    This a section of the fibration $q^X_{n}$, so that $\dD(f_1,\dots,f_m)\le n-1$, as desired.
\end{proof}

In particular, $\dTC_m(X)\le \prod_{i=1}^m(\dcat(\pr_i)+1)-1\le(\dcat(X)+1)^m-1$ for the projections $\pr_i\colon X^m\to X$, so we recover the general upper bound from~\cite{Ja1}.

\begin{proposition}\label{prop: recovering cat}
    Let $f\colon(X,x_0)\to (Y,y_0)$ be a map. If $\iota_i\colon X^{m-1}\to X^m$ is the map $\iota_i(x_1,\dots,x_{m-1})=(x_1,\dots,x_{i-1},x_0,x_{i},\dots,x_{m-1})$ for $1\le i \le m$, then
    \[
    \sd\cat(f^{m-1})\le\sd\D(f^m\circ\iota_1,\dots,f^m\circ\iota_m).
    \]
    In particular, $\sd\cat(X^{m-1})=\sd\D(\iota_1,\dots,\iota_m)$.
\end{proposition}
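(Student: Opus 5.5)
The plan is to show the inequality by converting (distributional) sections of the fibration $q=(f^m\circ\iota_1,\dots,f^m\circ\iota_m)^*\pi^{Y^m}_m$ over $X^{m-1}$ into (distributional) sections of $(f^{m-1})^*p$, where $p\colon P_0(Y^{m-1})\to Y^{m-1}$ is the based path fibration computing $\sd\cat(f^{m-1})$. As in the proof of Proposition~\ref{prop: (co)domain}, I would write the argument for the distributional invariants using measures of paths. The same construction applied to ordered linear combinations in the iterated fiberwise join then gives the statement for the ordinary invariants; alternatively, for $\D$ one can use open sets via Lemma~\ref{lem: dist as secat}.

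First I would unwind the pullback. A point of the total space over $x=(x_1,\dots,x_{m-1})\in X^{m-1}$ is a path $\alpha=(\alpha^1,\dots,\alpha^m)\in (Y^m)^I$ satisfying $\alpha(t_i)=f^m(\iota_i(x))$ for each $i$. The $k$-th coordinate of $\iota_i(x)$ is
\begin{itemize}
\item $x_k$ if $k<i$,
\item $x_0$ if $k=i$,
\item $x_{k-1}$ if $k>i$.
\end{itemize}
Fix $1\le j\le m-1$ and look at the $j$-th coordinate path $\alpha^j$. It satisfies $\alpha^j(t_j)=f(x_0)=y_0$ and $\alpha^j(t_{j+1})=f(x_j)$, because $j<j+1$. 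So the reversed restriction
\[
\beta_j(s):=\alpha^j\bigl((1-s)t_{j+1}+s\,t_j\bigr)
\]
is a path in $Y$ from $f(x_j)$ to $y_0$. The assignment $\alpha\mapsto(\beta_1,\dots,\beta_{m-1})$ is continuous in the compact-open topology. It therefore defines a fiberwise map from $q$ to $(f^{m-1})^*p$, given by $(x,\alpha)\mapsto(x,(\beta_1,\dots,\beta_{m-1}))$, which is compatible with the projections to $X^{m-1}$.

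The next step uses the functoriality of the distributional construction recalled in Subsection~\ref{subsec: secat}. This fiberwise map induces maps between the associated fibrations $q_{n+1}\to((f^{m-1})^*p)_{n+1}$, obtained by pushing forward measures, and it is the identity on the base. Composing a section of $q_{n+1}$ with this map therefore gives a section of $((f^{m-1})^*p)_{n+1}$, which proves $\sd\cat(f^{m-1})\le\sd\D(f^m\circ\iota_1,\dots,f^m\circ\iota_m)$.

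For the special case, take $f=\id_X$. This gives $\sd\cat(X^{m-1})\le\sd\D(\iota_1,\dots,\iota_m)$. For the reverse inequality, the maps $\iota_i\colon X^{m-1}\to X^m$ are based, with basepoints $(x_0,\dots,x_0)$. Proposition~\ref{prop: (co)domain} bounds the distance by the (distributional) category of the domain, so $\sd\D(\iota_1,\dots,\iota_m)\le\sd\cat(X^{m-1})$.

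The only delicate point is the bookkeeping with the indices of $\iota_i$, so as to pick, for each $j$, the coordinate and time interval on which the path runs from $y_0$ to $f(x_j)$. Continuity of the restriction-and-reversal map, and its compatibility with pushforward of finitely supported measures, is routine.
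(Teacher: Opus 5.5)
Your proof is correct, but it takes a different route from the paper in both the target fibration and the extraction map.

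\textbf{The paper's route.} It uses its own model $p^Y_m\colon P_0(Y)\to Y^{m-1}$ for $\sd\cat(f^{m-1})$: a single path in $Y$ ending at $y_0$, evaluated at $t_1,\dots,t_{m-1}$. It sends $(x,\alpha)$ to the single coordinate path $\alpha_m$.

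\textbf{Your route.} You target the standard based path fibration of $Y^{m-1}$. You extract one path $\beta_j$ from each of the first $m-1$ coordinates, on the interval $[t_j,t_{j+1}]$.

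\textbf{Your bookkeeping is the one that works.} Write $\alpha_k$ for the $k$-th coordinate path. The pullback condition $\alpha(t_i)=f^m(\iota_i(x))$ gives $\alpha_k(t_i)=f(x_k)$ for $k<i$ and $\alpha_k(t_i)=f(x_{k-1})$ for $k>i$. Hence $\alpha_m(t_j)=f(x_{m-1})$ for every $j<m$. The paper's displayed description of $Q$ interchanges the coordinate and time indices. As a result, for $m\ge 3$ its $\alpha_m$ lies over $(f(x_{m-1}),\dots,f(x_{m-1}))$ rather than over $f^{m-1}(x)$. For instance, when $m=3$, $\alpha_3$ takes the values $f(x_2),f(x_2),y_0$ at $t_1,t_2,t_3$. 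Only for $m=2$ do the two descriptions agree.

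\textbf{One line to add.} To match the paper's definition literally, you need to pass from your model to the paper's. The fibrewise map $P_0(Y^{m-1})\to P_0(Y)$ over $Y^{m-1}$ given by
$(\gamma_1,\dots,\gamma_{m-1})\mapsto(\gamma_1\cdot\overline{\gamma_2})\star\cdots\star(\gamma_{m-2}\cdot\overline{\gamma_{m-1}})\star\gamma_{m-1}$,
with $a_i=1/t_{i+1}$ as in Proposition~\ref{prop: (co)domain}, does this. It pushes sections of the $(n+1)$-th distributional (or join) fibration of $(f^{m-1})^*p$ forward to sections of the corresponding fibration of $(f^{m-1})^*p^Y_m$. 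So $\sd\secat((f^{m-1})^*p^Y_m)\le\sd\secat((f^{m-1})^*p)$, and your inequality transfers. Equivalently, apply this concatenation directly to your $\beta_j$; this is also the repair the paper's $\psi$ needs.

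\textbf{A side benefit of your model.} In the special case $f=\id_X$, it is literally the fibration $P_0(X^{m-1})\to X^{m-1}$ used for the domain bound in Proposition~\ref{prop: (co)domain}. So the two inequalities you combine concern the same invariant, with no identification needed.
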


\begin{proof}
In the setting of~\eqref{diag: main pullback}, the pullback for $(f^{m}\circ\iota_1,\dots,f^{m}\circ\iota_m)^*\pi^{Y^m}_m$ is
\[
\xymatrix@C=3cm{
Q \ar[r]^-{} \ar[d]_-{q^{X^{m-1}}} & (Y^m)^I \ar[d]^-{\pi^{Y^m}_m} \\
X^{m-1} \ar[r]^-{(f^m\circ\iota_1,\dots,f^m\circ\iota_m)} & (Y^m)^m,
}   
\]
and the pullback space $Q$ is
\begin{multline*}
Q=\{(x_1,\dots,x_{m-1},\alpha_1,\dots,\alpha_{m})\in X^{m-1}\times (Y^{I})^{m} \ \mid \ \alpha_i(t_i)=y_0, 
\\
\ \alpha_i(t_j)=f(x_j) \ \text{if}\ j<i, \ \alpha_i(t_j)= f(x_{j-1})\text{ if }j > i\text{ for } 1\le i\le m\},
\end{multline*}  
where we use $(Y^m)^I\cong (Y^I)^m$. Consider the map $\psi\colon Q\to P_0(Y)$ defined as
\[
\psi(x_1,\dots,x_{m-1},\alpha_1,\dots,\alpha_{m})=\alpha_m.
\]
Then we have the following commutative diagrams for each $k\ge 1$, induced by the fiberwise measure (on the left) and the fiberwise join (on the right) constructions from the diagram $f^{m-1}\circ q^{X^{m-1}}=p_m^Y\circ \psi$:
\[
\xymatrix@C=1.5cm{
Q_k \ar[r]^-{\psi_k} \ar[d]_-{q^{X^{m-1}}_k} 
& 
P_0(Y)_k \ar[d]^-{(p_m^{Y})_k}
\\
X^{m-1}\ar[r]^-{f^{m-1}} 
&
Y^{m-1}, 
}
\quad \quad \xymatrix@C=1.5cm{
\ast^k_{\bullet} Q \ar[r]^-{\ast^k_{\bullet}\psi} \ar[d]_-{\ast^k_{\bullet} q^{X^{m-1}}} 
& 
\ast^k_{\bullet} P_0(Y) \ar[d]^-{\ast^k_{\bullet} p_m^{Y}}
\\
X^{m-1}\ar[r]^-{f^{m-1}} 
&
Y^{m-1}.
}
\]
In either diagram, a given section of the left map, when composed with the top map, produces a lift of $f^{m-1}$ along the right map. So,
\[
\sd\cat(f^{m-1}) = \sd\secat((f^{m-1})^*p^Y_m) \le \sd\secat(q^{X^{m-1}}) = \sd\D(f^m\circ\iota_1,\dots,f^m\circ\iota_m),
\]
where the last equality is due to Lemma~\ref{lem: dist as secat} and the definition of $\dD$. Upon taking $f=\id_X$ and using the first upper bound from Proposition~\ref{prop: (co)domain}, we get $\sd\cat(X^{m-1})=\sd\D(\iota_1,\dots,\iota_m)$.
\end{proof}

The equality $\cat(X^{m-1})=\D(\iota_1,\dots,\iota_m)$ was first noted in~\cite{MM,BV}.

\begin{ex}
Let $f_i\colon X\to Y$ be any maps for $1\le i \le m$ such that one of $X$ or $Y$ is 
an $(\R P^n)^{m-1}$.
Note that $\dD(f_1,\dots,f_m)\le\dcat((\R P^n)^{m-1})$ by Proposition~\ref{prop: (co)domain}. Also, 
\[
\dcat((\R P^n)^{m-1})\le(\dcat(\R P^n)+1)^{m-1}-1=2^{m-1}-1
\]
in view of~\cite[Proposition~3.5]{DJ}. So, $2^{m-1}-1$ is a \emph{universal} upper bound to $\dD(f_1,\dots,f_m)$. However, $\D(\iota_1,\dots,\iota_m)=\cat((\R P^n)^{m-1})=n(m-1)$ by Proposition~\ref{prop: recovering cat} for the maps $\iota\colon (\R P^n)^{m-1}\hookrightarrow (\R P^n)^m$. So, we can take large values of $n$ and see arbitrarily large gaps between the distributional and the usual homotopic distances. 
This can also be seen by taking, more generally, one of $X$ or $Y$ to be a sufficiently 
high-dimensional lens space or a projective product space in view of the results from~\cite[Section~8]{JO}.
\end{ex}

\subsection{Covering spaces}
As explained above, (distributional) homotopic distance ``interpolates'' between (distributional) LS-category and (distributional) topological complexity. Now, for a covering space $\alpha\colon \overline X \to X$, it is easy to show that $\sd\cat(\overline X) \leq \sd\cat(X)$~\cite{CLOT,DJ}, but $\TC(\overline X) \leq \TC(X)$ is \emph{not}
true in general, see~\cite{Dr-covering}. This means that there can
be no analogue of the $\sd\cat$ inequality for $\D$ in general. Indeed, if we lift the projections $\pr_i\colon X \times X \to X$
to the covering space $\overline X \times \overline X \to \overline X$, we 
simply obtain the respective projections $\wt \pr_1, \wt \pr_2$, so that the homotopic
distance of the lifts is simply $\sd\TC(\overline X)$. Here we shall show that the 
same example, due to Dranishnikov~\cite{Dr-covering}, with $\TC(\overline X) > \TC(X)$ also gives
the analogous result for distributional complexity. Hence, there
can be no general result for distributional homotopic distance as well.

Recall the following result from~\cite[Subsection 6.3]{DJ} (see also~\cite{Dr}).
\begin{theorem}\label{thm:wedge}
If $X$ and $Y$ are aspherical complexes that satisfy
$$\max \{\dim X, \dim Y\} < \max \{\dTC(X), \dTC(Y), \dcat(X \times Y)\},$$
then 
$$\dTC(X \vee Y) = \max \{\dTC(X), \dTC(Y), \dcat(X \times Y)\}.$$
\end{theorem}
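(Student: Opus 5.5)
This statement is quoted from \cite[Subsection~6.3]{DJ}, so the paper takes it as known. If I were to prove it, I would follow Dranishnikov's argument for $\TC$ of wedges of aspherical spaces and adapt each step to the distributional (measure-valued) setting.

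\emph{Lower bound.} I would show $\dTC(X\vee Y)\ge\max\{\dTC(X),\dTC(Y),\dcat(X\times Y)\}$. The spaces $X$ and $Y$ are retracts of $X\vee Y$. Distributional complexity does not increase under retractions, because sections of $(\pi^{X\vee Y}_2)_{n+1}$ restrict and push forward along the retraction $r\times r$. Hence $\dTC(X),\dTC(Y)\le\dTC(X\vee Y)$.

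For $\dcat(X\times Y)$, I would restrict a distributional motion planner on $(X\vee Y)^2$ to the subspace $X\times Y\subset (X\vee Y)^2$. Every path from a point of $X$ to a point of $Y$ passes through the wedge point. Using asphericity, I would identify the restricted path fibration over $X\times Y$ with the pullback of the path fibration $p^{X\times Y}_2$, up to fiber homotopy equivalence. Sending a path to the pair consisting of its part in $X$ up to the wedge point and its reversed part in $Y$ should give a map of fibrations into $P_0(X\times Y)$ over $X\times Y$. Applying the distributional functor $(\ )_{n+1}$ then yields $\dcat(X\times Y)\le\dTC(X\vee Y)$.

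\emph{Upper bound.} Set $n$ equal to the right-hand maximum. I would decompose $(X\vee Y)^2=X^2\cup Y^2\cup (X\times Y)\cup(Y\times X)$, with overlaps along wedge-point slices. On each piece I take the available sections of measure fibrations, of size at most $n+1$. These must then be glued into a single continuous measure-valued section over the union. For ordinary $\TC$, Dranishnikov does this gluing by obstruction theory, and I would follow the same route here.

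Because everything is aspherical, the existence of a section of $p_{n+1}$ over a CW complex $B$ reduces to an obstruction theory in which the fiber of $p_{n+1}$ is highly connected relative to $n$. The dimension hypothesis $\max\{\dim X,\dim Y\}<n$ is what makes the gluing obstructions, which live on cells of dimension up to about $\max(\dim X,\dim Y)$ near the slices, vanish. This is how Dranishnikov's proof uses it.

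\emph{Main obstacle.} The hardest step is showing that the fiber of $p_{n+1}$, namely measures on $\Omega$ with support of size at most $n+1$, has the connectivity that the join $\ast^{n+1}\Omega$ has, or at least enough of it to run the extension argument. Measure spaces are quotients of joins by symmetric-group actions and can lose connectivity. I would first try to avoid this by interpolating convexly between sections: two measure-valued sections that agree near the slice can be averaged with a partition of unity. This raises the support size, but support sizes add only along the overlaps, and the dimension hypothesis is there to control that.
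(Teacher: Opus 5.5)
The paper gives no proof of this statement; it imports it from \cite{DJ}. Your sketch has a genuine gap in the upper bound, which is the real content of the theorem. You correctly see what gluing requires. The four sections over $X^2$, $Y^2$, $X\times Y$, $Y\times X$ must be matched along the slices $X\times\{v\}$, $\{v\}\times Y$, etc., which have dimension at most $\max\{\dim X,\dim Y\}\le n-1$. For that, the fiber of the distributional fibration must be $(n-1)$-connected. You only gesture at asphericity giving this, then treat it as the unresolved main obstacle.

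Your proposed bypass, averaging with a partition of unity, fails. Where two sections $\mu,\nu$ disagree, $\lambda\mu+(1-\lambda)\nu$ has support $\supp\mu\cup\supp\nu$. That can have $2n+2$ points, and more near $(v,v)$, where all four pieces meet. So you get a bound well above $n$, and the dimension hypothesis does nothing to shrink that union.

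The missing idea is exactly where asphericity is used. $X\vee Y$ is aspherical, so, as in Proposition~\ref{prop: cd-D vs D}, $\pi^{X\vee Y}_2$ is fiber homotopy equivalent to the covering $\wh\pi^{X\vee Y}_2$. That covering has discrete fiber $\pi=\pi_1(X)*\pi_1(Y)$. Hence the fiber of the distributional fibration is $\B_{n+1}(\pi)$, which is the $n$-skeleton of the full simplex on the vertex set $\pi$, and so it is $(n-1)$-connected. The gluing is then routine:
\begin{enumerate}
\item Choose a section over $A=(X\vee Y)\times\{v\}\cup\{v\}\times(X\vee Y)$.
\item On each closed piece $P$, the given section restricted to $P\cap A$ is vertically homotopic to the chosen one, because the obstructions lie in $H^i(P\cap A;\pi_i(\B_{n+1}(\pi)))=0$ for $i\le n-1$.
\item Extend these vertical homotopies over $P$ using the relative homotopy lifting property for the CW pair $(P,P\cap A)$.
\item Glue the resulting sections, which now agree on all overlaps.
\end{enumerate}

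In the lower bound, by contrast, asphericity is not needed, and the identification you invoke is false. Over $X\times Y$, the restricted path fibration of $X\vee Y$ has fibers homotopy equivalent to $\pi_1(X)*\pi_1(Y)$. The fibration $P_0(X\times Y)\to X\times Y$ has fibers homotopy equivalent to $\pi_1(X)\times\pi_1(Y)$, so the two are not fiber homotopy equivalent. Also, ``the part of the path in $X$ up to the wedge point'' is not continuous in the path if it means cutting at a time where the path hits $v$, since a path may pass through $v$ many times.

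Use instead the collapsing retractions $r_X\colon X\vee Y\to X$ and $r_Y\colon X\vee Y\to Y$, and the map $\gamma\mapsto\bigl(t\mapsto(r_X\gamma(t),r_Y\gamma(1-t))\bigr)$. This is a continuous fiberwise map into $P_0(X\times Y)$ over $X\times Y$ for arbitrary $X$ and $Y$. Pushing forward measures along it gives $\dcat(X\times Y)\le\dTC(X\vee Y)$. Your retraction argument for $\dTC(X),\dTC(Y)\le\dTC(X\vee Y)$ is fine as stated.
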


\begin{ex}\label{exam:nocover}
Let us take $X=S^1$ and $Y=T^2$ in the above theorem. Note that $Z:= X\vee Y=S^1 \vee T^2$ satisfies the hypotheses of the above theorem
since the maximum dimension is $2$, while $\dcat(S^1 \times T^2)=\dcat(T^3)=3$ because of the fact that rational cup length of a finite CW complex bounds its $\dcat$ value from below, see~\cite[Subsection~4.2]{DJ}. Therefore, $\dTC(Z)=3$. Now, take the covering space given by the $2$-fold cover of $S^1$ by itself with 
two tori attached: $\overline Z = T^2 \vee S^1 \vee T^2$. To apply the theorem,
we note that $T^2 \vee S^1$ and $T^2$ are both aspherical of dimension $2$, while
$$\dcat((T^2 \vee S^1) \times T^2) \geq \cu_\Q((T^2 \vee S^1)\times T^2)
= 4.$$
But then we have 
\[
\dTC(\overline Z) = \max \{3,2,\dcat((T^2 \vee S^1) \times T^2)\}
\geq 4
\]
by Theorem~\ref{thm:wedge}. Thus, in general, we do not have $\dTC(\overline Z) \leq \dTC(Z)$ for an arbitrary covering $\overline Z \to Z$.
\end{ex}

We ask whether there exist any conditions on maps $f_i\colon X \to Y$ such that
$\sd\D(\wt f_1, \dots,\wt f_m) \leq \sd\D(f_1,\dots,f_m)$, where $\wt f_i\colon \overline X \to \overline Y$ is the lift of $f_i$ for each $i$.

\section{Sequential diagonal (distributional) homotopic distance}\label{sec:dhd}

In this section, we introduce a new diagonalized invariant of homotopic distance and its distributional analogue, and study their properties.

\begin{definition}\label{def:tcd} 
Let $m\ge 2$ be an integer and $Y$ be a path-connected space with fundamental group
$\pi = \pi_1(Y,y_0)$. The $m$\emph{-th sequential diagonal (or $\cD$-)homotopic distance} between maps $f_i\colon X \to Y$ for $1\le i \le m$, denoted $\D^\cD(f_1,\dots,f_m)$, is defined as the least non-negative integer $n$ such that $X$ can be covered by $n +1$ 
open subsets $U_j$ such that for each $j$ and any choice of basepoint $u_j\in U_j$, the map 
\[
\pi_1\left(U_j, u_j\right) \longrightarrow 
\pi_1\left(Y^m, \left(f_1(u_j),\dots,f_m(u_j)\right)\right),
\]
induced by the restriction $(f_1,\dots,f_m)_{|U_j}\colon U_j\to Y^m$, takes
values in a subgroup conjugate to the diagonal subgroup $\Delta < \pi^m$.
\end{definition}

Here, we mention that for each $u_j\in U_j$, there is an isomorphism
\[
\pi_1(Y^m, (f_1(u_j),\dots,f_m(u_j))) \to \pi_1(Y^m, (y_0, \dots,y_0)) = \pi^m
\]
determined uniquely up to conjugation, and that the diagonal inclusion $Y \to Y^m$ induces
the inclusion $\pi \to \pi^m$ onto the diagonal subgroup $\Delta$.

\begin{remark}\label{rem:zerocrit}
There are two things to note about Definition \ref{def:tcd}. First, the 
criterion that $(f_1,\dots, f_m)_\#$ maps $\pi_1(U_j)$ to a subgroup conjugate to
the diagonal subgroup is equivalent to saying that the restrictions $(f_i)_{\#|\pi_1(U_j)}$ and $(f_k)_{\#|\pi_1(U_j)}$ are conjugate for $1\le i,k\le m$. Second, this reformulation says that $\D^\cD(f_1,\dots,f_m)=0$ if and only if, up to conjugation, $(f_i)_\#=(f_k)_\#$ for $1\le i,k\le m$ on all of $\pi_1(X)$.
\end{remark}

For connected, locally path-connected, and semi-locally simply connected $Y$, recall that $\wh\pi^Y_m\colon \wh Y^m\to Y^m$ is a connected covering corresponding to the diagonal subgroup $\Delta< \pi^m$. In analogy with Lemma~\ref{lem: dist as secat}, we have the following.

\begin{lemma}\label{lem: cD-dist as secat}
    Given $m\ge 2$ and maps $f_i\colon X\to Y$ for $1\le i \le m$, we have that $\D^{\cD}(f_1,\dots,f_m)=\secat(\wh q^X)$, where $\wh q^X$ is the pullback $(f_1,\dots,f_m)^*\wh\pi^Y_m$ as follows:
\begin{equation}\label{diag: second main pullback}
\xymatrix@C=2cm{
\wh Q \ar[r]^-{} \ar[d]_-{\wh q^X} & \wh Y^m \ar[d]^-{\wh\pi^Y_m} \\
X \ar[r]^-{(f_1,\dots,f_m)} & Y^m.
}   
\end{equation}
\end{lemma}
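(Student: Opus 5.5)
The plan is to compare, for a fixed open set $U\subset X$, the two conditions involved: the existence of a partial section of $\wh q^X$ over $U$, and the $\pi_1$-condition of Definition~\ref{def:tcd} on $U$. Since $\wh Q$ is the pullback in~\eqref{diag: second main pullback}, a partial section of $\wh q^X$ over $U$ is the same thing as a lift of $(f_1,\dots,f_m)_{|U}\colon U\to Y^m$ through the covering $\wh\pi^Y_m\colon \wh Y^m\to Y^m$. Once the two conditions are shown equivalent for each $U$, taking minimal covers on both sides gives $\D^{\cD}(f_1,\dots,f_m)=\secat(\wh q^X)$ directly from the definitions. As in Lemma~\ref{lem: dist as secat}, no paracompactness is needed, since we are not passing through fiberwise joins.

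The main tool is the lifting criterion for covering spaces. We work one path component $V$ of $U$ at a time, noting that lifts can be chosen independently on distinct path components. Fix $u\in V$. A lift of $(f_1,\dots,f_m)_{|V}$ with prescribed value $\wh y$ over $(f_1(u),\dots,f_m(u))$ exists if and only if
\[
(f_1,\dots,f_m)_\#\bigl(\pi_1(V,u)\bigr)\subset (\wh\pi^Y_m)_\#\bigl(\pi_1(\wh Y^m,\wh y)\bigr).
\]
Transported to $\pi^m$ by a change-of-basepoint isomorphism, the right-hand subgroups, as $\wh y$ ranges over the fiber, are exactly the conjugates of the diagonal subgroup $\Delta$. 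This holds because $\wh Y^m$ is connected and corresponds to $\Delta$, and the images of $\pi_1$ at different points of a fiber run over the full conjugacy class.

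The two directions then go as follows. If a partial section exists over $U$, then for every $u_j\in U$ the image of $\pi_1(U,u_j)$ lies in the image from the lift point, which is a conjugate of $\Delta$; this is exactly the condition in Definition~\ref{def:tcd}. Conversely, suppose the condition holds. For each path component we choose a point $u$ and then choose $\wh y$ in the fiber whose associated subgroup is the given conjugate of $\Delta$ containing the image. The lifting criterion then produces a lift on that component.

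The main obstacle is applying the lifting criterion on $U$, which requires $U$ (or its components) to be path-connected and locally path-connected. Arbitrary open subsets of $X$ need not be. I would handle this by assuming, as is implicit throughout (ANR or CW hypotheses), that $X$ is locally path-connected, so that open sets and their path components inherit this property. I would also take care to read Definition~\ref{def:tcd} componentwise, so that the basepoint $u_j$ ranges over each path component of $U_j$.
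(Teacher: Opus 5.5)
Your proposal is correct and is the same argument as the paper's: by the pullback property, a partial section of $\wh q^X$ over $U$ is a lift of $(f_1,\dots,f_m)_{|U}$ along the covering $\wh\pi^Y_m$. The lifting criterion then identifies the existence of such a lift with the condition of Definition~\ref{def:tcd}, since the images of $\pi_1$ over a fiber run through the conjugates of $\Delta$. The local path-connectedness of $X$ that you impose is used tacitly in the paper's one-line proof and is genuinely necessary. For example, take the Warsaw circle $W$, a non-null-homotopic $f\colon W\to S^1$, and $f_2=\ast$: they satisfy the $\pi_1$-condition on all of $W$, yet $\wh q^W$ has no global section.
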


\begin{proof}
The result follows by noticing two things. First, if $U\subset X$ is open, 
then $\pi_1(U) \to \pi_1(Y^m)$ is conjugate to a subgroup
of the diagonal $\Delta$ if and only if the composition 
\[
U \hookrightarrow X \xrightarrow{(f_1,\dots,f_m)} Y^m
\]
lifts to a map $U \to \wh Y^m$ along $\wh\pi^Y_m$. In turn, because of the pullback property, such a lift exists 
if and only if there is a partial section $s\colon U \to \wh Q$ to $\wh q^X$.
\end{proof}

\begin{remark}\label{rem:surject}
We note that the pullback space $\wh Q$ in~\eqref{diag: second main pullback} will \emph{not} be connected, unless $\Im((f_1,\dots,f_m)_\#)$ and $\Delta$ 
generate all of $\pi_1(Y^m)\cong \pi_1(Y)^m$. This follows from the
Mayer--Vietoris sequence for homotopy groups of a pullback. This does not seem to 
present any problems in formulating our results. Indeed, for ordinary LS-category and 
sectional category, only a few results (such as the bound on the category of the 
total space of a fibration) depend on the connectedness of the space.
\end{remark}

With Lemma~\ref{lem: cD-dist as secat} in view, we define the following.

\begin{definition}\label{def: d-diagonal dist}
For an integer $m\ge 2$, the $m$\emph{-th sequential distributional diagonal (or $\cD$-)homotopic distance} between maps $\{f_i\}_{i=1}^m$, denoted $\dD^{\cD}(f_1,\dots,f_m)$, is defined as
\[
\dD^{\cD}(f_1,\dots,f_m):=\dsecat((f_1,\dots,f_m)^*\wh\pi^Y_m).
\]    
\end{definition}

Diagonal (distributional) homotopic distance between maps recovers the previously studied invariants sequential diagonal (distributional) topological complexity and (distributional) one-category of a map as follows.

\begin{proposition}\label{prop: recovering cd-tcf}
    Let $X$ be a space and let $\pr_i\colon X^m\to X$ be the projection onto the $i$-th coordinate. If $f\colon X\to Y$ is a map, then 
    \[
    \sd\TC^{\cD}_m(f)=\sd\D^{\cD}(f\circ \pr_1,\dots,f\circ\pr_m).
    \]
    Moreover, if $h\colon Y\to Z$ is a map that induces an isomorphism of fundamental groups and $\ast\colon Y\to Z$ is the constant map to a basepoint $z_0\in Z$, then
    \[
    \sd\cat_1(f)=\sd\D^{\cD}(h\circ f,\ast\circ f)=\sd\D^{\cD}(\ast\circ f,h\circ f).
    \]        
\end{proposition}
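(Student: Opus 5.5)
The plan is to mirror the proof of Proposition~\ref{prop: recovering tcf}: identify the relevant pullbacks directly, or at least show they are isomorphic as coverings over the base. Then the (distributional) sectional categories agree by definition.

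For the first equality, note that $(f\circ\pr_1,\dots,f\circ\pr_m)=f^m\colon X^m\to Y^m$. Hence the pullbacks $(f\circ\pr_1,\dots,f\circ\pr_m)^*\wh\pi^Y_m$ and $(f^m)^*\wh\pi^Y_m$ literally coincide. Applying $\sd\secat$ gives $\sd\TC^\cD_m(f)=\sd\D^\cD(f\circ\pr_1,\dots,f\circ\pr_m)$ by the definition of $\sd\TC^\cD_m$ and Lemma~\ref{lem: cD-dist as secat} (resp.\ Definition~\ref{def: d-diagonal dist}).

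For the second equality, I will use $m=2$ and the map $(h\circ f,\ast\circ f)=(h,\ast)\circ f\colon X\to Z\times Z$. By functoriality of pullbacks,
\[
(h\circ f,\ast\circ f)^*\wh\pi^Z_2=f^*\bigl((h,\ast)^*\wh\pi^Z_2\bigr).
\]
It therefore suffices to show that $(h,\ast)^*\wh\pi^Z_2$ is isomorphic over $Y$ to the universal covering $\wt p^Y\colon\wt Y\to Y$. Pulling back isomorphic coverings along $f$ then gives isomorphic fibrations over $X$, and hence equal $\secat$ and $\dsecat$, so $\sd\cat_1(f)=\sd\D^\cD(h\circ f,\ast\circ f)$.

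To identify $(h,\ast)^*\wh\pi^Z_2$, I use the fact that the pullback of a covering along a map $g\colon Y\to Z^2$ is a covering of $Y$. Over the basepoint component, it corresponds to the subgroup $g_\#^{-1}$ of the subgroup defining the cover; in general its components correspond to double cosets. Here $g_\#=(h_\#,1)\colon\pi_1(Y)\to\pi_1(Z)^2$, and the preimage of the diagonal $\Delta$ is $\{\gamma\mid h_\#(\gamma)=1\}=\ker h_\#$, which is trivial since $h_\#$ is an isomorphism. Connectivity also holds: $\Im(h_\#,1)=\pi_1(Z)\times 1$ together with $\Delta$ generates $\pi_1(Z)^2$, so by Remark~\ref{rem:surject} the pullback is connected. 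Thus the pullback is a connected covering of $Y$ with trivial characteristic subgroup, i.e.\ the universal cover.

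Alternatively, one can write the pullback concretely. Take $\wh Z^2=(\wt Z\times\wt Z)/\pi$ and let $\wt h\colon\wt Y\to\wt Z$ be a lift of $h$, which is a homeomorphism onto $\wt Z$ fiberwise-equivariantly. Then define $\wt Y\to(h,\ast)^*\wh Z^2$ by $\wt y\mapsto(p(\wt y),[\wt h(\wt y),\wt z_0])$. Freeness of the diagonal action, together with $h_\#$ being an isomorphism, shows this is a bijective covering map.

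The symmetric equality with $(\ast\circ f,h\circ f)$ follows by the same argument, or by applying the coordinate swap on $Z^2$. The swap preserves $\Delta$, so it lifts to an automorphism of $\wh\pi^Z_2$ compatible with the pullbacks.

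The main obstacle is the covering-space identification in the second step, in particular making sure the pullback is connected and that the kernel computation really pins down the universal cover, rather than merely giving a cover with the correct fiber over the basepoint. I will handle this via the double-coset description of components of pulled-back coverings, which is also what underlies Remark~\ref{rem:surject}.
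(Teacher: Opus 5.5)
Your proposal is correct and follows essentially the same route as the paper. The first equality comes from $(f\circ\pr_1,\dots,f\circ\pr_m)=f^m$. For the second, you identify $(h,\ast)^*\wh\pi^Z_2$ with the universal cover $\wt p^Y$, using connectivity via Remark~\ref{rem:surject} together with the fact that $h_\#$ is an isomorphism, and then pull back along $f$. One small slip in your optional concrete construction: a lift $\wt h\colon\wt Y\to\wt Z$ need not be a homeomorphism. It is only $h_\#$-equivariant, hence bijective on fibres, but that is all your argument actually uses.
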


\begin{proof}
The proof for $\sd\TC^{\cD}_m$ is entirely analogous to that for $\sd\TC_m$ from Proposition~\ref{prop: recovering tcf}. For $\sd\cat_1$, we consider the following two pullbacks, where the left one is for $\sd\cat_1(f)$ and the right one is in the setting of~\eqref{diag: second main pullback}:
\[
\xymatrix@C=1.2cm{
P \ar[r]^-{} \ar[d]_-{f^*\wt p^Y} & \wt Y \ar[d]^-{\wt p^Y} \\
X \ar[r]^-{f} & Y,
} \quad \quad \quad \xymatrix@C=1.2cm{
\wh Q \ar[r]^-{} \ar[d]_-{\wh q^Y} & \wh Z^2 \ar[d]^-{\wh\pi^Y_2} \\
Y \ar[r]^-{(h,\ast)} & Z^2.
} 
\]
Because $h_\#$ is an isomorphism, $\wh q^Y\colon\wh Q\to Y$ is a covering whose fiber is bijective to $\pi_1(Y)$. Since $\Im(h_\#)$ and $\Delta_Z$ clearly generate $\pi_1(Z)^2$, the cover $\wh Q$ is connected (\emph{cf}. Remark~\ref{rem:surject}), and is therefore the universal cover $\wt Y$ of $Y$. Thus, the maps $\wh q^Y$ and $\wt p^Y$ coincide, so the above two (homotopy) pullbacks can be merged along the middle vertical map to yield the following (homotopy) pullback:
\[
\xymatrix@C=2cm{
P \ar[r]^-{} \ar[d]_-{f^*\wt p^Y} & \wh Z^2 \ar[d]^-{\wh\pi^Y_2} \\
X \ar[r]^-{(h\circ f,\ast\circ f)} & Z^2.
} 
\]
It then follows that $f^*\wt p^Y\simeq (h\circ f,\ast\circ f)^*\wh\pi_2^Y$. Finally, the homotopy invariance of $\sd\secat$ gives us the desired equality
\[
\sd\cat_1(f)=\sd\secat(f^*\wt p^Y)=\sd\secat((h\circ f,\ast\circ f)^*\wh\pi_2^Y)=\sd\D^{\cD}(h\circ f,\ast\circ f).
\]
Indeed, the first equality is by the definition of $\sd\cat_1$ and the last one is by the definition of $\sd\D^{\cD}$, see Lemma~\ref{lem: cD-dist as secat} and Definition~\ref{def: d-diagonal dist}. The proof of the other equality $\sd\cat_1(f)=\sd\D^{\cD}(\ast\circ f,h\circ f)$ is similar.
\end{proof}

\begin{ex}\label{exam:TtimesCP}
Let $X=Y=Z=T^{2k}\times \C P^{n-k}$ with $n\ge k$ and $f=h=\id_Y$. Then Propositions~\ref{prop: recovering tcf} and~\ref{prop: recovering cd-tcf} give 
\[
\D^\cD(\id_Y,*)=\cat_1(Y)=2k\quad \text{and}\quad \sd\D(\id_Y,*)=\sd\cat(Y)=n+k,
\]
see~\cite{Op,DJ}. If $k=1$, then $\dD^\cD(\id_Y,*)=\dcat_1(Y)=2$ by~\cite[Section~4]{JO}. Thus, there are several cases when $\sd\D^{\cD}$ and $\sd\D$ are arbitrarily far apart.
\end{ex}

Also, as before, we get $\sd\D^{\cD}(f_1,\dots,f_m)\le\sd\TC^{\cD}_m(Y)$ for maps $f_i\colon X\to Y$ by definition. Next, we notice the following.

\begin{proposition}\label{prop: cd-D vs D}
Given maps $f_i\colon X\to Y$ for $1\le i \le m$, we have that
\[
\sd\D^{\cD}(f_1,\dots,f_m)\le \sd\D(f_1,\dots,f_m).
\]
This inequality is saturated if $Y$ is an Eilenberg--Mac~Lane space $K(\pi,1)$.
\end{proposition}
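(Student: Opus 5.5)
The plan is to produce a map of fibrations over $Y^m$ from $\pi^Y_m\colon Y^I\to Y^m$ to the covering $\wh\pi^Y_m\colon \wh Y^m\to Y^m$, pull it back along $(f_1,\dots,f_m)$, and push sections forward.

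\textbf{Step 1: the comparison map.} Fix a basepoint $\wt y_0\in\wt Y$ over $y_0$. Define $\Phi\colon Y^I\to \wh Y^m$ as follows. Given a path $\alpha$, lift it to the path $\wt\alpha$ in $\wt Y$ starting at any point over $\alpha(0)$, and set
\[
\Phi(\alpha)=\bigl[\wt\alpha(t_1),\dots,\wt\alpha(t_m)\bigr]\in \wh Y^m=\wt Y^m/\pi .
\]
A different starting point changes $\wt\alpha$ by a deck transformation, which acts diagonally on the tuple. So $\Phi$ is well defined. It is continuous because path lifting in a covering is continuous in the compact-open topology, and locally one may choose the starting lift continuously. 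Clearly $\wh\pi^Y_m\circ\Phi=\pi^Y_m$.

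An alternative, if continuity is awkward to write out, is to argue via Lemmas~\ref{lem: dist as secat} and~\ref{lem: cD-dist as secat} directly. On each open set $U$ where all $f_{i|U}$ are homotopic, the concatenated homotopy $H$ gives the map $x\mapsto\Phi(H(x))$, which lifts $(f_1,\dots,f_m)_{|U}$ to $\wh Y^m$. In fact one can just note that the $\pi_1$ condition of Definition~\ref{def:tcd} holds, since homotopic maps induce conjugate homomorphisms on $\pi_1$.

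\textbf{Step 2: pullback and sections.} Pulling back along $(f_1,\dots,f_m)$ gives a fiberwise map $Q\to\wh Q$ over $X$, namely $(x,\alpha)\mapsto(x,\Phi(\alpha))$. By the functoriality recalled in Subsection~\ref{subsec: secat}, it induces fiberwise maps $\ast^k_\bullet Q\to \ast^k_\bullet\wh Q$ and $Q_k\to\wh Q_k$ over $X$. Composing a section of $\ast^{n+1}_\bullet q^X$, respectively of $q^X_{n+1}$, with this map yields a section of the corresponding construction for $\wh q^X$. This gives $\sd\D^{\cD}\le\sd\D$.

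\textbf{Step 3: equality when $Y=K(\pi,1)$.} I must show that a partial section of $\wh q^X$ over $U$ gives a partial section of $q^X$ over $U$. I will do this by proving that $\Phi$ is a (weak) homotopy equivalence over $Y^m$; then the pullbacks $q^X$ and $\wh q^X$ are fiber homotopy equivalent, and homotopy invariance of $\sd\secat$ gives equality.

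Both spaces are homotopy equivalent to $Y$: $Y^I\simeq Y$ via $\alpha\mapsto\alpha(0)$, and $\wh Y^m\simeq\wt Y^{m}\times_\pi E\pi\simeq Y$ because $\wt Y$ is contractible. Under these identifications $\Phi$ corresponds to the identity on $\pi_1$. Since both spaces are aspherical, $\Phi$ is a weak equivalence. Concretely, the fibers over a point are $\pi^{m-1}$ in both cases, identified by $\Phi$ bijectively as lifted endpoint data. A fiberwise map between fibrations that is a homotopy equivalence on fibers is a fiber homotopy equivalence (Dold's theorem).

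\textbf{Main obstacle.} The main obstacle is Step 3. One must carefully check that $\Phi$ restricted to the fibers, $\pi_0$ of the space of paths through prescribed points versus the discrete fiber of the covering, is a bijection, and that those path-space fibers have contractible components. The contractibility is exactly where $Y=K(\pi,1)$ enters, since $\Omega Y$ has contractible components.
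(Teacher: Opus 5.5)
Your proposal is correct and is essentially the paper's proof: your $\Phi$ is the paper's map $\theta_m$, defined more carefully via lifts to $\wt Y$. As in the paper, the induced whisker map $Q\to\wh Q$ over $X$ pushes sections of $q^X_k$ and $\ast^k_\bullet q^X$ forward, and the aspherical case rests on $\theta_m$ being a fiber homotopy equivalence. The only difference is in that last step: the paper just quotes it from~\cite{FGLO2}, whereas you verify it via Dold's theorem. Your verification compares the contractible components of the path-space fibers with the discrete covering fibers, and it needs the same standing assumption the paper uses there, e.g.\ that $Y$ is a locally finite CW complex.
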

\begin{proof}
First, consider the map $\theta_m\colon Y^I\to \wh Y^m$ given by
\[
\theta_m(\gamma) = \left[\gamma(0),\gamma(t_2),\ldots,\gamma(t_{m-1}),\gamma(1)\right],
\]
which is well-defined because we simply quotient by the $\pi$-action. As explained in~\cite{FGLO2}, $\wh \pi^Y_m\circ \theta_m=\pi^Y_m$. Then we get the following commutative cube,
\begin{equation}\label{diag: cube}
\xymatrix@C=3pc{
Q \ar[dr]_-{q^X} \ar[dd]_-{\psi} \ar[rr] & & Y^I \ar[dd]^(.6){\theta_m} \ar[dr]^-{\pi^Y_m} &\\
& X \ar[rr]^(.3){(f_1,\dots,f_m)} \ar[dd]_(.3){\id_X} & & Y^m \ar[dd]^-{\id_{Y^m}} \\
\wh Q \ar[rr] \ar[dr]_-{\wh q^X}\ & & \wh Y^m \ar[dr]^-{\wh\pi^Y_m} \\
& X \ar[rr]^-{(f_1,\dots,f_m)} && Y^m,
}
\end{equation}
where the squares on the top and bottom are pullbacks, and $\psi\colon Q\to\wh Q$ is the whisker map given by the universal property of the pullback at the bottom. The commutative square on the left then gives for each $k\ge 1$ a commutative triangle
\[
\xymatrix{
Q_k \ar[dr]_-{q^X_k} \ar[rr]^-{\psi_k} & & \wh Q_k \ar[dl]^-{\wh q^X_k} \\ & X. &
}
\]
Clearly, a section of $q^X_k$ produces that of $\wh q^X_k$. Similarly, one concludes that a section of $\ast^k_\bullet q^X$ produces that of $\ast^k_\bullet \wh q^X$ using the fiberwise joined map $\ast^k_\bullet\psi$. Therefore,
\[
\sd\D^{\cD}(f_1,\dots,f_m) = \sd\secat(\wh q^X) \le \sd\secat(q^X) = \sd\D(f_1,\dots,f_m).
\]
Now, if $Y$ is an aspherical locally finite CW complex, then $\theta_m$ is a fiber homotopy equivalence, see~\cite{FGLO2}. Hence, $\pi^Y_m$ can be replaced by the homotopy equivalent map $\wh\pi^Y_m$, so that up to homotopy, the pullback squares at the top and bottom of~\eqref{diag: cube} coincide. This gives the desired equality $\sd\secat(\wh q^X) = \sd\secat(q^X)$.
\end{proof}

Before we present the next result, we recall a version of a result from \cite{GLO} rephrased in our language. Recall that two subgroups $A,B < G$ are \emph{complementary} if $G=AB$ and $A \cap B=\{e\}$.

\begin{lemma}\label{lem:catX1X2}
Given maps $f_i\colon X_i \to Y$ of connected spaces for $i\in\{1,2\}$, suppose
\begin{enumerate}
\item each $(f_i)_\# \colon \pi_*(X_i) \to \pi_*(Y)$ is an inclusion on 
all homotopy groups;
\item there is a direct sum decomposition 
$(f_1)_\# (\pi_j(X_1)) \oplus (f_2)_\# (\pi_j(X_2)) \cong \pi_j(Y)$ for each $j \geq 2$;
\item  $(f_1)_\# (\pi_1(X_1))$ and $(f_2)_\# (\pi_1(X_2))$ 
are complementary subgroups in  $\pi_1(Y)$.
\end{enumerate}
Then the pullback space $P$ corresponding to the evaluation fibration $\pi^Y_m\colon Y^I \to Y^m$ over $f_1\times f_2\times\id_{Y^{m-2}}\colon X_1\times X_2 \times Y^{m-2}\to Y^m$
is contractible.
\end{lemma}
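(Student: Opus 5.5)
The plan is to reduce $P$ to the homotopy pullback of $f_1$ and $f_2$ alone. I will then show that all of its homotopy groups vanish, using the exact homotopy sequence of that pullback together with hypotheses (1)--(3).

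\emph{Step 1 (discarding the free coordinates).} Explicitly,
\[
P=\{(x_1,x_2,y_3,\dots,y_m,\gamma)\mid \gamma(0)=f_1(x_1),\ \gamma(t_2)=f_2(x_2),\ \gamma(t_i)=y_i \text{ for } i\ge 3\}.
\]
The coordinates $y_3,\dots,y_m$ are determined by $\gamma$. Moreover, the portion of $\gamma$ on $[t_2,1]$ is an arbitrary path starting at $f_2(x_2)$.

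Contracting this tail (reparametrizing $\gamma_{|[t_2,1]}$ to the constant path at $\gamma(t_2)$) gives a deformation retraction of $P$ onto the subspace where $\gamma$ is constant on $[t_2,1]$. Rescaling $[0,t_2]$ to $[0,1]$, this subspace is homeomorphic to
\[
P'=\{(x_1,x_2,\alpha)\in X_1\times X_2\times Y^I\mid \alpha(0)=f_1(x_1),\ \alpha(1)=f_2(x_2)\}.
\]
This $P'$ is the pullback of $\pi^Y_2$ along $f_1\times f_2$, i.e.\ the standard homotopy pullback of $f_1$ and $f_2$. So $P\simeq P'$.

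\emph{Step 2 (exact sequence).} The projection $P'\to X_1\times X_2$ is a fibration with fiber $\Omega Y$. Its long exact sequence can be rewritten as the Mayer--Vietoris-type sequence
\[
\cdots\to\pi_{j+1}(X_1)\times\pi_{j+1}(X_2)\xrightarrow{\ \delta_{j+1}\ }\pi_{j+1}(Y)\to\pi_j(P')\to\pi_j(X_1)\times\pi_j(X_2)\xrightarrow{\ \delta_j\ }\pi_j(Y)\to\cdots,
\]
where $\delta_j(a,b)=(f_1)_\#(a)\,(f_2)_\#(b)^{-1}$. This sequence ends in the exactness statement
\[
\pi_0(P')\;\cong\;(f_1)_\#\pi_1(X_1)\backslash\pi_1(Y)/(f_2)_\#\pi_1(X_2)
\]
for connected $X_i$.

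\emph{Step 3 (reading off the groups).}
\begin{itemize}
\item For $j\ge2$, hypotheses (1) and (2) say that $\delta_j$ is an isomorphism.
\item For $j=1$, the map $\delta_1$ is a bijection of sets. Surjectivity follows from $\pi_1(Y)=AB$ with $A=(f_1)_\#\pi_1(X_1)$ and $B=(f_2)_\#\pi_1(X_2)$. Injectivity follows from $A\cap B=\{e\}$ together with the injectivity of $(f_i)_\#$: if $f_1(a)f_2(b)^{-1}=f_1(a')f_2(b')^{-1}$, then $f_1(a'^{-1}a)=f_2(b'^{-1}b)\in A\cap B$.
\item Consequently $\pi_j(P')=0$ for all $j\ge1$, since the flanking maps are respectively surjective (for $\delta_{j+1}$) and injective (for $\delta_j$).
\item The double coset space is a single point because $AB=\pi_1(Y)$, so $P'$ is path-connected.
\end{itemize}

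\emph{Main obstacle.} The delicate point is the low-degree end of the sequence, where only exactness of groups and pointed sets is available. Specifically, one must check that the boundary $\pi_2(Y)\to\pi_1(P')$ and the action of $\pi_1(Y)$ on $\pi_0(P')$ behave as claimed, so that injectivity of $\delta_1$ really gives triviality of $\pi_1(P')$. I would verify this by hand with explicit loops in $P'$.

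\emph{Conclusion.} The space $P'$ is weakly contractible. Assuming the spaces involved have CW homotopy type (so that $P'$ does too, by Milnor's theorem on mapping spaces), Whitehead's theorem then gives that $P'$, and hence $P$, is contractible.
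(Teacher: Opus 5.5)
The paper does not prove this lemma. It imports it from \cite{GLO}, ``rephrased in our language'', and its only hint at an argument is Remark~\ref{rem:surject}, which invokes the Mayer--Vietoris sequence for homotopy groups of a pullback. Your proposal is correct and is precisely that argument. Step~1 is what turns the two-map statement into the $m$-fold one: $y_3,\dots,y_m$ are determined by $\gamma$ and $\gamma_{|[t_2,1]}$ is a free path, so shrinking this tail deformation retracts $P$ onto a copy of the standard homotopy pullback $P'$ (for $m=2$ there is nothing to do).

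The ``main obstacle'' you flag is handled by ordinary pointed-set exactness, with no explicit loops needed:
\begin{itemize}
\item The image of $\pi_1(P')\to\pi_1(X_1)\times\pi_1(X_2)$ is $\delta_1^{-1}(e)$. This is trivial because $A\cap B=\{e\}$ and the $(f_i)_\#$ are injective.
\item The kernel of that map is the image of $\pi_1(\Omega Y)\cong\pi_2(Y)$. This is zero because $\delta_2$ is onto, so $\pi_1(P')=0$.
\item Since $\delta_1$ is onto, every component of the fibre lies in the base component of $P'$. Every point of $P'$ can be joined to the fibre because $X_1\times X_2$ is path-connected, so $P'$ is path-connected.
\end{itemize}
Two small corrections. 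The relevant action is that of $\pi_1(X_1\times X_2)$ on $\pi_0(\Omega Y)$, not of $\pi_1(Y)$ on $\pi_0(P')$. With the usual lifting convention the connecting map is $(a,b)\mapsto (f_1)_\#(a)^{-1}(f_2)_\#(b)$; this differs from your $\delta_1$ by the bijection $(a,b)\mapsto(a^{-1},b^{-1})$, so nothing changes.

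Your closing caveat is not a weakness but a necessary correction to the statement: for arbitrary connected spaces, hypotheses (1)--(3) only give weak contractibility. For example, take $X_2=Y=\ast$ and $X_1$ the Warsaw circle. Hypotheses (1)--(3) hold trivially, and $P\cong X_1$, which is weakly contractible but not contractible.

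Under CW-type hypotheses on $X_1,X_2,Y$ your conclusion holds. However, the CW type of $P'$ needs more than ``Milnor's theorem on mapping spaces'', because $P'$ is a pullback rather than a mapping space. Either of the following fills this in:
\begin{itemize}
\item Replace $f_1,f_2$ by inclusions of subcomplexes (via mapping cylinders) and use Milnor's theorem for spaces of maps of triads.
\item Apply the Stasheff--Sch\"on theorem to $\Omega Y\to P'\to X_1\times X_2$: the total space of a fibration whose base and fibre have CW homotopy type again has CW homotopy type.
\end{itemize}
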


To put this in our context, given maps $f_i\colon X_i \to Y$ for $i\in\{1,2\}$, define auxiliary maps 
$\tilde f_j\colon X_1\times X_2 \times Y^{m-2} \to Y$ for $1\le j \le m$ as $\tilde f_j(x_1,x_2,y_1,\dots,y_{m-2})= f_j(x_j)$ for $j\in\{1,2\}$ and $\wt f_j(x_1,x_2,y_1,\dots,y_{m-2})=y_{j-2}$ for $3\le j \le m$. We then obtain the following.

\begin{theorem}\label{thm:mapdist}
If the maps $f_i \colon X_i \to Y$ for $i\in\{1,2\}$ satisfy the conditions of Lemma \ref{lem:catX1X2}, 
then the homotopic distance between the maps $\{\wt f_j\}_{j=1}^m$ is given by
\[
\sd\D(\wt f_1,\dots,\wt f_m) = \sd\cat(X_1 \times X_2\times Y^{m-2}).
\]
Moreover, if $Y=K(\pi,1)$ is aspherical, then $\sd\D(\wt f_1,\dots,\wt f_m) = \sd\D^\cD(\wt f_1,\dots,\wt f_m)$. In particular, if $X_i=K(A_i,1)$ for $A_i=(f_i)_\# \bigl(\pi_1(X_i)\bigr)$ for $i\in\{1,2\}$, then
\[
\D(\wt f_1,\dots,\wt f_m) = \D^\cD(\wt f_1,\dots,\wt f_m) = \cd(A_1 \times A_2\times \pi^{m-2}),
\]
and if, additionally, $A_i$ and $\pi$ are torsion-free, then 
\[
\dD(\wt f_1,\dots,\wt f_m) = \dD^\cD(\wt f_1,\dots,\wt f_m) = \cd(A_1 \times A_2\times \pi^{m-2}).
\]
\end{theorem}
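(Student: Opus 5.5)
The plan is to work with the sectional-category description. Set $W=X_1\times X_2\times Y^{m-2}$; note that $(\wt f_1,\dots,\wt f_m)=f_1\times f_2\times\id_{Y^{m-2}}\colon W\to Y^m$. By Lemma~\ref{lem: dist as secat} and the definition of $\dD$, $\sd\D(\wt f_1,\dots,\wt f_m)=\sd\secat(q^W)$, where $q^W\colon P\to W$ is the pullback of $\pi^Y_m$ along this product map. Lemma~\ref{lem:catX1X2} tells us that $P$ is contractible. A fibration with contractible total space is fiber homotopy equivalent, via the Dold--Fadell comparison, to the based path fibration $P_0(W)\to W$. Concretely, $P\simeq\ast$ gives a map $P\to P_0(W)$ over $W$ by contracting; conversely, one uses that $q^W$ admits homotopy lifting. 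Since $\sd\secat$ is invariant under fiber homotopy equivalence, $\sd\secat(q^W)=\sd\secat(P_0(W)\to W)=\sd\cat(W)$.

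To avoid fully verifying fiber homotopy equivalence, I would prove the two inequalities separately. For the upper bound, Proposition~\ref{prop: (co)domain} gives $\sd\D\le\sd\cat(W)$. For the lower bound, choose a nullhomotopy of $\id_P$. Composing with $q^W$ yields a map $\Phi\colon P\to P_0(W)$ with $p^W_2\circ\Phi=q^W$, where paths end at $q^W(\text{basepoint})$. Applying the fiberwise measure or join constructions as in the proof of Proposition~\ref{prop: recovering cat}, any section of $q^W_k$ (respectively $\ast^k_\bullet q^W$) composes to a section of $(p^W_2)_k$. This gives $\sd\cat(W)\le\sd\D$. This step requires $W$ to be path-connected, which holds by the standing connectedness assumptions.

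For the aspherical statement, Proposition~\ref{prop: cd-D vs D} applied to $Y=K(\pi,1)$ gives $\sd\D^\cD=\sd\D$ directly. For the final claims, when $X_i=K(A_i,1)$ the space $W$ is a $K(A_1\times A_2\times\pi^{m-2},1)$. Eilenberg--Ganea then gives $\cat(W)=\cd(A_1\times A_2\times\pi^{m-2})$, with the usual caveat in the $\cd=2$ case being handled by the standard Eilenberg--Ganea/Stallings--Swan argument. In the torsion-free case, the known result that $\dcat$ of an aspherical space with torsion-free fundamental group equals $\cd$ (from~\cite{DJ,Dr}) handles the distributional version; the product of torsion-free groups is again torsion-free.

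The main obstacle is the lower bound. We must make sure that contractibility of the pullback space $P$ really transfers sections of the joined or distributional fibrations to those of the path fibration of $W$. We also need to ensure the cited $\dcat=\cd$ result applies to our group; that requires checking its hypotheses, such as finiteness, in the source.
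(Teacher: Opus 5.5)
Your proof is correct and follows the paper's argument. You identify $(\wt f_1,\dots,\wt f_m)$ with $f_1\times f_2\times\id_{Y^{m-2}}$ and use the contractibility from Lemma~\ref{lem:catX1X2} to get $\sd\secat(q^W)=\sd\cat(W)$, which you justify more explicitly than the paper does (via Proposition~\ref{prop: (co)domain} for $\le$, and via the map $P\to P_0(W)$ over $W$ induced by a contraction of $P$ for $\ge$), and then you conclude with Proposition~\ref{prop: cd-D vs D}, Eilenberg--Ganea, and the torsion-free $\dcat=\cd$ result, which the paper takes from~\cite{KW}. One small correction: $\cat(K(G,1))=\cd(G)$ holds with no exception, so no $\cd=2$ caveat is needed --- the $\cd=2$ subtlety concerns geometric dimension, not category.
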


\begin{proof}
Note that $(\wt f_1,\wt f_2,\wt f_3,\dots,\wt f_m)=f_1\times f_2\times\id_{Y^{m-2}}$ by definition, which implies that the pullback over $(\wt f_1,\dots,\wt f_m)$ is the same as that over $f_1\times f_2\times\id_{Y^{m-2}}$. By Lemma~\ref{lem:catX1X2}, the pullback $P$ is contractible. Therefore, 
\[
\sd\secat(P \xrightarrow{\tau} X_1\times X_2\times Y^{m-2})=\sd\cat(X_1\times X_2\times Y^{m-2})=\sd\D(\wt f_1,\dots,\wt f_m),
\]
where the second equality holds by definition. If $Y$ is aspherical, then we have $P \simeq \wh Q$ (compatibly with maps $\tau$ and $\hat q\colon \wh Q\to X_1\times X_2\times Y^{m-2}$ as in~\eqref{diag: second main pullback}), so $\sd\secat(\tau)=\sd\secat(\hat q)$. In other words,
$\sd\D(\wt f_1,\dots,\wt f_m) = \sd\D^\cD(\wt f_1,\dots,\wt f_m)$. Next, if $X_i=K(A_i,1)$, then $X_1 \times X_2 \times Y^{m-2}= K(A_1\times A_2\times \pi^{m-2},1)$ and so by the classical theorem of Eilenberg and Ganea,
\[
\cat(K(A_1\times A_2\times \pi^{m-2},1))=\cd(A_1 \times A_2\times \pi^{m-2}).
\]
Finally, if these groups are torsion-free, then ~\cite{KW} (see also~\cite{Dr,JO}) gives 
\[
\dcat(K(A_1\times A_2\times \pi^{m-2},1))=\cd(A_1 \times A_2\times \pi^{m-2}).
\]
\end{proof}

\begin{ex}
    If $F\xhookrightarrow{i}E\xrightarrow{p}B$ is a fibration and $s\colon B\to E$ is a section of $p$, then 
    \[
    \sd\D(\wt s,\wt i,\id,\dots,\id) = \sd\cat(B\times F\times E^{m-2}),
    \]
    where $\id$ is the identity map on the product $B \times F\times E^{m-2}$. This is because of Theorem~\ref{thm:mapdist}, which applies since $s$ and $i$ satisfy the hypotheses of Lemma~\ref{lem:catX1X2}.
\end{ex}

\section{Behavior under compositions}\label{sec:compositions}
In this section, we study standard properties of the distances $\sd\D$ and $\sd\D^{\cD}$ under compositions to explain their homotopy invariance. Such results for the original invariant $\D$ were obtained in~\cite{MM,BV} using its definition in terms of open sets. This approach does not quite work for $\dD$. So, in this section, we prove such results for the new invariants $\dD$ and $\sd\D^{\cD}$. Note, however, that while we give direct proofs here, Proposition~\ref{prop:classmap} provides an alternative approach in terms of the original results for $\D$.

\begin{proposition}\label{prop:precomp}
Given $f_i\colon X \to Y$ for $1\le i \le m$ and $h\colon W \to X$, we have
\[
\sd\D(f_1\circ h,\dots,f_m\circ h) \leq \sd\D(f_1,\dots,f_m), \quad \text{and}
\]
\[
\sd\D^{\cD}(f_1\circ h,\dots,f_m\circ h) \leq \sd\D^{\cD}(f_1,\dots,f_m).
\]
\end{proposition}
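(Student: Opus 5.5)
The plan is to reduce both inequalities to the standard fact, already invoked before Proposition~\ref{prop: (co)domain}, that (distributional) sectional category does not increase under pullback: for any fibration $p\colon E\to B$ and any map $g\colon A\to B$ we have $\sd\secat(g^*p)\le\sd\secat(p)$. The key observation is the identity of maps
\[
(f_1\circ h,\dots,f_m\circ h)=(f_1,\dots,f_m)\circ h\colon W\to Y^m .
\]
By the pasting law for pullbacks, the pullback of $\pi^Y_m$ along the left-hand map is therefore the pullback along $h$ of $q^X=(f_1,\dots,f_m)^*\pi^Y_m$ from~\eqref{diag: main pullback}. Explicitly, the space is $\{(w,\alpha)\mid \alpha(t_i)=f_i(h(w))\}\cong\{(w,(x,\alpha))\in W\times Q\mid x=h(w)\}$.

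For the first inequality, I would use Lemma~\ref{lem: dist as secat} in the ordinary case and the definition of $\dD$ in the distributional case. With these, $\sd\D(f_1\circ h,\dots,f_m\circ h)=\sd\secat(h^*q^X)\le\sd\secat(q^X)=\sd\D(f_1,\dots,f_m)$. To keep the argument self-contained, I would also make the monotonicity explicit. Given a section $s\colon X\to Q_k$ of $q^X_k$ (respectively $X\to\ast^k_\bullet Q$), the formula $w\mapsto\bigl(w,s(h(w))\bigr)$ gives a section of $(h^*q^X)_k$. More precisely, a measure $\sum\lambda_j(h(w),\alpha_j)$ is sent to $\sum\lambda_j(w,\alpha_j)$. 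This assignment is continuous because it is induced by the map of fibrations $h^*Q\to Q$ covering $h$, in the same way as the functoriality recalled at the end of Subsection~\ref{subsec: secat}. The same formula works verbatim for iterated fiberwise joins.

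The diagonal case is identical, with the covering $\wh\pi^Y_m$ in place of $\pi^Y_m$. Here I would use Lemma~\ref{lem: cD-dist as secat} and Definition~\ref{def: d-diagonal dist}, together with the fact that $(f_1\circ h,\dots,f_m\circ h)^*\wh\pi^Y_m=h^*\wh q^X$ by pasting. In the ordinary case one can also argue directly from Definition~\ref{def:tcd}: if $\{U_j\}$ is a cover of $X$ as in the definition, then $\{h^{-1}(U_j)\}$ is an open cover of $W$. For each component, $\pi_1(h^{-1}(U_j))\to\pi_1(Y^m)$ factors through $\pi_1(U_j)$, so its image lies in a conjugate of $\Delta$.

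There is no real obstacle here. The only point needing a sentence of care is the continuity of the induced map on the measure spaces $Q_k$. This reduces to the naturality of the construction $E\mapsto E_k(p)$ with respect to maps of fibrations, which is taken as known in the paper.
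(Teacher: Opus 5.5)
Your proposal is correct and is essentially the paper's argument: both identify the fibration for $(f_1\circ h,\dots,f_m\circ h)$ with the pullback along $h$ of $q^X$ (resp.\ $\wh q^X$) and then use that $\sd\secat$ does not increase under pullback. The only difference is that the paper phrases the pasting step with homotopy pullbacks and Mather's lemma, while you use the strict pasting law, which is if anything more direct. One small imprecision: continuity of your explicit section $w\mapsto (w,s(h(w)))$ rests on the compatibility $(h^*q^X)_k\cong h^*(q^X_k)$ of the distributional and join constructions with pullbacks, not on the induced map $(h^*Q)_k\to Q_k$, which goes in the opposite direction.
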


\begin{proof}
Consider the following commutative diagram,
$$\xymatrix@C=3pc{
\wh Q_W \ar[r]^-{r} \ar[d]_-{\wh q^W} & \wh Q_X \ar[r] \ar[d]_-{\wh q^X} & \wh Y^m 
\ar[d]_-{\wh\pi^Y_m} \\
W \ar[r]^-h & X \ar[r]^-{(f_1,\dots,f_m)} & Y^m, 
}
$$
where the rectangle and the right square are the pullbacks from the definitions of $\sd\D^{\cD}(f_1\circ h,\dots,f_m\circ h)$ and $\sd\D^{\cD}(f_1,\dots,f_m)$, respectively, and where $r$ is the obvious whisker map. In particular, they are homotopy pullbacks because $\wh\pi^Y_m$ is a fibration. Then the left square is also a homotopy pullback, see~\cite[Lemma~14]{Mather}. 
It then follows from the standard properties and homotopy invariance of (distributional) sectional category (see~\cite{Sch} and~\cite[Lemma~2.3]{JO}) that
\[
\sd\D^{\cD}(f_1\circ h,\dots,f_m\circ h)=\sd\secat(\wh q^W)\le\sd\secat(\wh q^X)=\sd\D^{\cD}(f_1,\dots,f_m).
\]
For $\sd\D$, we proceed similarly by replacing $\wh Q^A$, $\wh q^A$, and $\wh\pi^Y_m$ with $Q^A$, $q^A$, and $\pi^Y_m$, respectively, for $A\in\{X,W\}$ in the setting of~\eqref{diag: main pullback}.
\end{proof}

\begin{proposition}\label{prop:postcomp}
Given $f_i\colon X \to Y$ for $1\le i \le m$ and $k\colon Y \to Z$, we have
\[
\sd\D(k\circ f_1,\dots,k\circ f_m) \leq \sd\D(f_1,\dots,f_m), \quad \text{and}
\]
\[
\D^{\cD}(k\circ f_1,\dots,k\circ f_m) \leq \D^{\cD}(f_1,\dots,f_m)
\]
If, additionally, the induced map $k_{\#}\colon \pi_1(Y)\to \pi_1(Z)$ is an isomorphism, then the above inequality is saturated, and we also have
\[
\dD^{\cD}(k\circ f_1,\dots,k\circ f_m) \leq \dD^{\cD}(f_1,\dots,f_m).
\]
\end{proposition}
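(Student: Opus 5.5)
The plan is to compare the pullback fibrations by exhibiting a map of fibrations over $X$ from the one for $(f_1,\dots,f_m)$ to the one for $(k\circ f_1,\dots,k\circ f_m)$. Then I would argue as in Proposition~\ref{prop: cd-D vs D}: a section of the $n$-th fiberwise join or measure construction on the first produces one on the second.

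\emph{Ordinary and distributional $\D$.} The map $k$ induces $k^I\colon Y^I\to Z^I$, $\gamma\mapsto k\circ\gamma$, and $\pi^Z_m\circ k^I=k^m\circ\pi^Y_m$. Since $(k\circ f_1,\dots,k\circ f_m)=k^m\circ(f_1,\dots,f_m)$, the universal property of the pullback for $q^X_Z:=(k\circ f_1,\dots,k\circ f_m)^*\pi^Z_m$ gives a whisker map $\Phi\colon Q_Y\to Q_Z$ with $q^X_Z\circ\Phi=q^X$. Explicitly, $\Phi(x,\alpha)=(x,k\circ\alpha)$. Applying the fiberwise join and distributional constructions of Subsection~\ref{subsec: secat} yields $\ast^n_\bullet\Phi$ and $\Phi_n$ over $\id_X$. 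Composing sections with these maps then gives $\sd\secat(q^X_Z)\le\sd\secat(q^X)$, that is, $\sd\D(k\circ f_1,\dots)\le\sd\D(f_1,\dots)$.

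\emph{Diagonal version, inequality.} For $\D^\cD$ I would use Definition~\ref{def:tcd} directly. Suppose $U_j$ is an open set on which $(f_1,\dots,f_m)_\#$ lands in a conjugate of $\Delta_Y<\pi_1(Y)^m$. The homomorphism $(k^m)_\#=(k_\#)^m$ sends $\Delta_Y$ into $\Delta_Z$ and conjugates to conjugates. Hence $(k\circ f_1,\dots,k\circ f_m)_\#$ restricted to $\pi_1(U_j)$ lands in a conjugate of $\Delta_Z$, so the same cover works. Equivalently, by Remark~\ref{rem:zerocrit}, pairwise conjugacy of the $(f_i)_\#$ on $\pi_1(U_j)$ is preserved by applying $k_\#$.

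\emph{Diagonal version, the $\pi_1$-isomorphism case.} This is the main obstacle, since the distributional diagonal case has no open-set description. Here I would first lift $k^m$ to a map of covers $\hat k\colon\wh Y^m\to\wh Z^m$ with $\wh\pi^Z_m\circ\hat k=k^m\circ\wh\pi^Y_m$. Such a lift exists because $(k^m)_\#(\Delta_Y)\subseteq\Delta_Z$; concretely, it is induced by the lift $\wt Y\to\wt Z$ of $k$, which is $\pi$-equivariant when $k_\#$ is an isomorphism. Whiskering then gives $\wh Q_Y\to\wh Q_Z$ over $X$, and the distributional construction yields $\dD^\cD(k\circ f_i)\le\dD^\cD(f_i)$.

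For the reverse inequality (saturation), the key observation is that when $k_\#$ is an isomorphism, the condition that $(k^m\circ(f_1,\dots,f_m))_\#$ lands in a conjugate of $\Delta_Z$ pulls back exactly to the condition that $(f_1,\dots,f_m)_\#$ lands in a conjugate of $\Delta_Y$, since $(k_\#)^{-1}(\Delta_Z)=\Delta_Y$. So the two open-set conditions coincide, and $\D^\cD$ is equal. At the level of fibrations, the square formed by $\hat k$, $k^m$ and the two covering maps is a pullback: the covers correspond to subgroups with $(k^m_\#)^{-1}(\Delta_Z)=\Delta_Y$, and the fibers are both bijective to $\pi$. Composing pullback squares then shows $\wh Q_Y\cong\wh Q_Z$ over $X$, which gives equality for $\dD^\cD$ as well. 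I would check this pullback claim carefully, possibly restricting to components as in Remark~\ref{rem:surject}.
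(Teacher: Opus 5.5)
Your proposal is correct, and for everything the statement asserts it follows the paper's proof:
\begin{itemize}
\item the whisker map $(x,\alpha)\mapsto(x,k\circ\alpha)$ between the path-space pullbacks for $\sd\D$;
\item the conjugacy argument on $\pi_1$-images for the $\D^\cD$ inequality, and its reversal via $(k_\#)^{-1}$;
\item the map $\wh Y^m\to\wh Z^m$ induced by the equivariant lift $\wt Y\to\wt Z$, for the $\dD^\cD$ inequality.
\end{itemize}

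You go beyond the paper in claiming that, when $k_\#$ is an isomorphism, the square formed by $\wh Y^m\to\wh Z^m$ over $k^m\colon Y^m\to Z^m$ is a pullback. This is true. The pullback of $\wh\pi^Z_m$ along $k^m$ is a connected cover, because $(k^m)_\#$ is onto. It corresponds to $((k^m)_\#)^{-1}(\Delta_Z)=\Delta_Y$, so no restriction to components is needed. One small correction: for $m\ge 3$ the fibers are $\pi^m/\Delta\cong\pi^{m-1}$, not $\pi$. In any case, what matters is that $(k_\#)^m$ induces a bijection $\pi_1(Y)^m/\Delta_Y\to\pi_1(Z)^m/\Delta_Z$, not a count of fiber sizes.

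Composing pullbacks then gives $\wh Q_Y\cong\wh Q_Z$ over $X$. Hence you get equality for $\dD^\cD$, not just the inequality the paper records. The paper obtains $\dD^\cD$-equality only in Corollary~\ref{cor: homotopy-invariance}, under the extra assumption that $k$ has a left homotopy inverse. Your version is exactly what Proposition~\ref{prop:classmap} needs, since a classifying map $Y\to K(\pi,1)$ generally has no such inverse.

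Finally, your lifting-criterion justification, $(k^m)_\#(\Delta_Y)\subseteq\Delta_Z$, shows more than you use: the $\dD^\cD$ inequality holds with no hypothesis on $k_\#$ at all.
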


\begin{proof}
First, for $\sd\D$, note that we have the following commutative cube,
\begin{equation}\label{diag: second cube}
\xymatrix@C=3pc{
Q \ar[dr]_-{q^X} \ar[dd]_-{\psi} \ar[rr] & & Y^I \ar[dd]^(.6){\ov k} \ar[dr]^-{\pi^Y_m} &\\
& X \ar[rr]^(.3){(f_1,\dots,f_m)} \ar[dd]_(.3){\id_X} & & Y^m \ar[dd]^-{k^m} \\
P \ar[rr] \ar[dr]_-{p}\ & & Z^I \ar[dr]^-{\pi^Z_m} \\
& X \ar[rr]^-{(k\circ f_1,\dots,k\circ f_m)} && Z^m,
}
\end{equation}
where the squares on the top and bottom are pullbacks, $\ov k$ is induced by $k$ naturally, and $\psi\colon Q\to P$ is the whisker map given by the universal property of the pullback at the bottom. As explained in the proof of Proposition~\ref{prop: cd-D vs D}, we can then easily show using $\psi$ that
\[
\sd\D(k\circ f_1,\dots,k\circ f_m) = \sd\secat(p) \le \sd\secat(q^X) = \sd\D(f_1,\dots,f_m).
\]
For $\D^{\cD}$, there is a more direct proof. If $U \subset X$ is an open set such that 
\begin{equation}\label{eq: conjugate1}
(f_1,\dots,f_m)_\#(\pi_1(U))=((f_1)_{\#},\dots,(f_m)_{\#})(\pi_1(U))
\end{equation}
takes values in a subgroup conjugate to
the diagonal $\Delta_Y$ in $\pi_1(Y)^m$, then 
\begin{equation}\label{eq: conjugate2}
(k\circ f_1,\dots,k\circ f_m)_\#(\pi_1(U))=(k_{\#}\circ (f_1)_{\#},\dots,k_{\#}\circ (f_m)_{\#})(\pi_1(U))    
\end{equation}
takes values in a subgroup conjugate to
the diagonal $\Delta_Z$ in $\pi_1(Z)^m$. Hence, 
\[
\D^{\cD}(k\circ f_1,\dots,k\circ f_m) \leq \D^{\cD}(f_1,\dots,f_m).
\]
If $k_{\#}$ is an isomorphism and the set in~\eqref{eq: conjugate2} takes values in a subgroup conjugate to $\Delta_Z$, then applying the inverse $(k_{\#})^{-1}$ to $\pi_1(Z)$, we see that the set in~\eqref{eq: conjugate1} takes values in a subgroup conjugate to $\Delta_Y$, thereby giving the reverse inequality
\[
\D^{\cD}(f_1,\dots,f_m)\leq \D^{\cD}(k\circ f_1,\dots,k\circ f_m).
\]
Also, 
the map $\phi'\colon \wt Y^m\to \wt Z^m$ between the universal covers induced by $k$ in turn induces a map $\phi\colon \wh Y^m\to \wh Z^m$ on the $\pi_1(Y)\cong\pi_1(Z)$-quotients. So, we can replace the spaces $Q$, $P$, $Y^I$ and $Z^I$ in~\eqref{diag: second cube} with $\wh Q$, $\wh P$, $\wh Y^m$ and $\wh Z^m$, respectively, and the maps $q^X$, $p$, $\pi^A_m$, and $\ov k$ in~\eqref{diag: second cube} with $\wh q^X$, $\wh p$, $\wh\pi^A_m$, and $\phi$, respectively, for $A\in\{Y,Z\}$. Then proceeding as before, we get a whisker map $\wh\psi\colon\wh Q\to \wh P$ such that $\wh p\circ \wh \psi=\wh q^X$, which helps get
\[
\dD^{\cD}(k\circ f_1,\dots,k\circ f_m) \leq \dD^{\cD}(f_1,\dots,f_m).
\]
\end{proof}

\begin{corollary}\label{cor: homotopy-invariance}
    If $f_i\colon X\to Y$ are maps for $1\le i \le m$, $h\colon W\to X$ has a right homotopy inverse, and $k\colon Y\to Z$ has a left homotopy inverse, then 
    \[
    \sd\D(f_1,\dots,f_m)=\sd\D(k\circ f_1\circ h, \dots, k\circ f_m\circ h), \quad \text{and}
    \]
    \[
    \D^{\cD}(f_1,\dots,f_m)=\D^{\cD}(k\circ f_1\circ h, \dots, k\circ f_m\circ h).
    \]
    If, additionally, $k_\#\colon \pi_1(Y)\to \pi_1(Z)$ is an isomorphism, then
    \[
    \dD^{\cD}(f_1,\dots,f_m)=\dD^{\cD}(k\circ f_1\circ h, \dots, k\circ f_m\circ h).
    \]
    In particular, $\sd\D$ and $\sd\D^{\cD}$ are homotopy invariants.
\end{corollary}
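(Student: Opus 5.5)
The plan is to apply Propositions~\ref{prop:precomp} and~\ref{prop:postcomp} twice each, so that the inequalities run in both directions. First I treat $h$. Let $h'\colon X\to W$ satisfy $h\circ h'\simeq\id_X$. Proposition~\ref{prop:precomp} gives
\[
\sd\D(f_1\circ h,\dots,f_m\circ h)\le\sd\D(f_1,\dots,f_m).
\]
Applying the same proposition to the maps $f_i\circ h$ and to $h'$ gives
\[
\sd\D(f_1\circ h\circ h',\dots,f_m\circ h\circ h')\le\sd\D(f_1\circ h,\dots,f_m\circ h).
\]

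To close the argument I need a preliminary fact: homotopic families have equal distance. That is, if $g_i\simeq g_i'$ for every $i$, then $\sd\D(g_1,\dots,g_m)=\sd\D(g_1',\dots,g_m')$, and the same holds for $\sd\D^{\cD}$.

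For this I use the pullback descriptions. By Lemma~\ref{lem: dist as secat}, Lemma~\ref{lem: cD-dist as secat} and Definition~\ref{def: d-diagonal dist}, both invariants are $\sd\secat$ of the pullback of a fibration ($\pi^Y_m$ or $\wh\pi^Y_m$) along $(g_1,\dots,g_m)$. Homotopic maps induce fiber homotopy equivalent pullbacks of a fibration. Then the homotopy invariance of $\sd\secat$ (\cite{Sch}, \cite[Lemma~2.3]{JO}) yields the equality. This step is the only real obstacle: it needs the fact that $\dsecat$ is invariant under fiber homotopy equivalence, which the paper already quotes.

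With that fact, $f_i\circ h\circ h'\simeq f_i$, so the two displays combine to give equality for $\sd\D$. The identical argument works for $\D^{\cD}$ and for $\dD^{\cD}$.

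Next I treat $k$. Let $k'\colon Z\to Y$ satisfy $k'\circ k\simeq\id_Y$. For $\sd\D$ and $\D^{\cD}$, Proposition~\ref{prop:postcomp} gives
\[
\D(k\circ g_1,\dots,k\circ g_m)\le\D(g_1,\dots,g_m),
\]
and applying it again with $k'$ gives $\D(k'\circ k\circ g_1,\dots)\le\D(k\circ g_1,\dots)$. Since $k'\circ k\circ g_i\simeq g_i$, the homotopy fact above turns this into equality.

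For $\dD^{\cD}$, Proposition~\ref{prop:postcomp} needs $\pi_1$-isomorphisms, and this is where the hypothesis that $k_\#$ is an isomorphism enters. From $k'_\#\circ k_\#=\id$ and $k_\#$ bijective, $k'_\#=(k_\#)^{-1}$ is also an isomorphism. So both applications of Proposition~\ref{prop:postcomp} are legitimate.

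Finally I combine the two steps. First replace $(f_i)$ by $(k\circ f_i)$, then precompose with $h$; this gives all three displayed equalities.

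For the last sentence, let $k$ and $h$ be homotopy equivalences, so they have two-sided homotopy inverses and $k_\#$ is an isomorphism. The corollary then says the invariants agree on families related by homotopy equivalences of domain and codomain. Together with the homotopy fact for maps, this is homotopy invariance.
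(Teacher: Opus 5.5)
Your proof is correct and follows the paper's argument: both obtain a two-sided inequality by applying Propositions~\ref{prop:precomp} and~\ref{prop:postcomp} to $h,h'$ and $k,k'$, using $f_i\simeq k'\circ(k\circ f_i\circ h)\circ h'$. You also make explicit two steps the paper leaves implicit: the invariants do not change when each map is replaced by a homotopic one, and $k'_\#$ is an isomorphism whenever $k_\#$ is, which is what justifies the second use of Proposition~\ref{prop:postcomp} for $\dD^{\cD}$.
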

\begin{proof}
    Suppose $h'\colon X\to W$ and $k'\colon Z\to Y$ are such that $h\circ h'\simeq\id_X$ and $k'\circ k\simeq \id_Y$. Then $f_i\simeq k'\circ (k\circ f_i\circ h)\circ h'$ for each $i$, so applying Propositions~\ref{prop:precomp} and~\ref{prop:postcomp}, we get for $\sd\D$, for instance, that
    \[
    \sd\D(f_1,\dots,f_m)\le \sd\D(k\circ f_1\circ h,\dots,k\circ f_m\circ h)\le \sd\D(f_1,\dots,f_m). 
    \]
Similar inequalities are obtained also for $\sd\D^{\cD}$ due to our hypothesis.
\end{proof}

We now see a way to relate the diagonal homotopic distances between maps with the homotopic distances between their post-compositions with classifying maps.

\begin{proposition}\label{prop:classmap}
Let $f_i\colon X\to Y$ be maps between path-connected CW complexes. If $\pi=\pi_1(Y)$ and $h\colon Y \to K(\pi,1)$ is a classifying map, then
\[
\sd\D^\cD(f_1,\dots,f_m) = \sd\D(h\circ f_1,\dots,h\circ f_m).
\]
\end{proposition}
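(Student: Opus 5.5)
The plan is to combine Proposition~\ref{prop: cd-D vs D} (the aspherical case), Proposition~\ref{prop:postcomp} (invariance of $\sd\D^\cD$ under post-composition with a $\pi_1$-isomorphism), and a direct identification of the pullback fibrations. Write $B=K(\pi,1)$. Since $h$ is a classifying map, $h_\#\colon\pi_1(Y)\to\pi_1(B)$ is an isomorphism.

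\textbf{The $\D^\cD$ and $\D$ invariants.} For the ordinary invariants the argument is a chain of equalities.
\begin{itemize}
\item Proposition~\ref{prop:postcomp} applied with $k=h$ gives the saturated equality $\D^\cD(f_1,\dots,f_m)=\D^\cD(h\circ f_1,\dots,h\circ f_m)$.
\item Since $B$ is aspherical, Proposition~\ref{prop: cd-D vs D} gives $\D^\cD(h\circ f_1,\dots,h\circ f_m)=\D(h\circ f_1,\dots,h\circ f_m)$. We take $B$ to be a locally finite CW model, as that proof requires.
\end{itemize}
This settles the non-distributional statement.

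\textbf{The distributional case.} Proposition~\ref{prop:postcomp} only yields one inequality, $\dD^\cD(h\circ f_1,\dots)\le\dD^\cD(f_1,\dots)$. For the other direction I would work at the level of fibrations rather than open sets.
\begin{itemize}
\item Since $h_\#$ is an isomorphism, lift $h^m$ to the universal covers, $\wt Y^m\to\wt B^m$. This map is $\pi$-equivariant for the diagonal action, so it descends to a map $\phi\colon\wh Y^m\to\wh B^m$ with $\wh\pi^B_m\circ\phi=h^m\circ\wh\pi^Y_m$.
\item The square formed by $\phi$, $h^m$ and the two covering maps is a genuine pullback of coverings. Both vertical maps are coverings with fibre the coset space $\pi^m/\Delta$. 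The map $\phi$ is a bijection on each fibre, because the identification of fibres with $\pi^m/\Delta$ is compatible with $h_\#^m$, which is an isomorphism carrying $\Delta$ onto $\Delta$.
\item Pasting this square with the pullback of $\wh\pi^Y_m$ along $(f_1,\dots,f_m)$ shows that $(f_1,\dots,f_m)^*\wh\pi^Y_m$ and $(h\circ f_1,\dots,h\circ f_m)^*\wh\pi^B_m$ coincide as fibrations over $X$. Their $\dsecat$ values are therefore equal.
\item Finally, use that $\theta_m\colon B^I\to\wh B^m$ is a fibre homotopy equivalence over $B^m$ when $B$ is aspherical (as in the proof of Proposition~\ref{prop: cd-D vs D}). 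Pulling back along $(h\circ f_1,\dots,h\circ f_m)$ identifies the last fibration, up to fibre homotopy equivalence, with $q^X$ for the maps $h\circ f_i$. By homotopy invariance of $\dsecat$ this gives $\dD^\cD(f_1,\dots,f_m)=\dD(h\circ f_1,\dots,h\circ f_m)$.
\end{itemize}
The same pullback argument also reproves the ordinary case uniformly.

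\textbf{Main obstacle.} The hardest step is checking carefully that the square involving $\phi$ is a genuine pullback of coverings. This is a fibrewise bijection over $Y^m$ together with the fact that a map of coverings which is bijective on fibres is a homeomorphism onto the pullback. The fibrewise bijection requires tracking basepoints and the conjugation ambiguity in identifying $\pi_1$ of the fibres. A second, minor point is ensuring that the fibre homotopy equivalence $\theta_m$ from~\cite{FGLO2} applies to the chosen CW model of $K(\pi,1)$; we can choose such a model to be locally finite, or appeal to its proof, which only uses asphericity.
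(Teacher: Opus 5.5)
Your proof is correct. For the ordinary invariants it is the paper's argument: the aspherical saturation in Proposition~\ref{prop: cd-D vs D}, combined with invariance of $\D^\cD$ under post-composition with $h$. The paper cites Corollary~\ref{cor: homotopy-invariance} for that invariance. That corollary assumes $h$ has a left homotopy inverse, which a classifying map has only if $Y$ is already aspherical. Your citation of the saturation clause of Proposition~\ref{prop:postcomp} is the appropriate one.

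For the distributional case your route genuinely differs from the paper's. Proposition~\ref{prop:postcomp} gives only $\dD^\cD(h\circ f_1,\dots,h\circ f_m)\le\dD^\cD(f_1,\dots,f_m)$, and the paper supplies no reverse inequality. You instead show that the descended map $\phi\colon\wh Y^m\to\wh B^m$ makes the square over $h^m$ a pullback of coverings. This is the same device the paper uses in the proof of Theorem~\ref{thm: main2}. It identifies $(f_1,\dots,f_m)^*\wh\pi^Y_m$ with $(h\circ f_1,\dots,h\circ f_m)^*\wh\pi^B_m$ over $X$. That yields $\sd\D^\cD(f_1,\dots,f_m)=\sd\D^\cD(h\circ f_1,\dots,h\circ f_m)$ for both invariants at once, with no need for Proposition~\ref{prop:postcomp}, and it supplies the inequality the paper is missing.

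Two simplifications are available.
\begin{enumerate}
\item The step you call the main obstacle needs no basepoint or conjugation bookkeeping. The lift $\wt h$ carries each fibre of $\wt Y\to Y$, a free transitive $\pi$-set, equivariantly along the isomorphism $h_\#$ onto the corresponding fibre of $\wt B\to B$, so it is bijective there. A bijective map of coverings over the locally path-connected base $Y^m$ is a local homeomorphism, hence a homeomorphism.
\item Your final step need not re-derive the $\theta_m$ fibre homotopy equivalence. The saturation clause of Proposition~\ref{prop: cd-D vs D} is already stated for both $\D$ and $\dD$.
\end{enumerate}
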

\begin{proof}
Since $h\circ f_i\colon X\to K(\pi,1)$, we have from Proposition~\ref{prop: cd-D vs D} that
\[
\sd\D(h\circ f_1,\dots,h\circ f_m)=\sd\D^{\cD}(h\circ f_1,\dots,h\circ f_m).
\]
But the quantity on the right is equal to $\sd\D^\cD(f_1,\dots,f_m)$ due to Corollary~\ref{cor: homotopy-invariance} since $h_{\#}$ is an isomorphism.
\end{proof}

We close this section with another property of homotopic distances.

\begin{proposition}\label{prop: product of maps}
    If $f_i\colon X\to Y$ and $h\colon Z\to W$ are maps for $1\le i \le m$, then
    \[
    \sd\D(f_1\times h,\dots,f_m\times h)=\sd\D(f_1,\dots,f_m)=\sd\D(h\times f_1,\dots,h\times f_m),
    \]
    where $f_i\times h\colon X\times Z\to Y\times W$ is the product map $(f_i\times h)(x,z)=(f_i(x),h(z))$, and $h\times f_i\colon Z\times X\to W\times Y$ is defined analogously for each $i$.
\end{proposition}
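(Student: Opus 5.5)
The plan is to prove two inequalities for the first equality; the second equality then follows by the symmetric argument, or by composing with the coordinate swap homeomorphisms $Z\times X\cong X\times Z$ and $W\times Y\cong Y\times W$ and using homotopy invariance (Corollary~\ref{cor: homotopy-invariance}).

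\emph{Lower bound.} Let $j\colon X\to X\times Z$ be $x\mapsto (x,z_0)$ and $\pr\colon Y\times W\to Y$ the projection. Then $\pr\circ(f_i\times h)\circ j=f_i$ for every $i$. Proposition~\ref{prop:precomp} gives
\[
\sd\D(f_1,\dots,f_m)=\sd\D(\pr\circ(f_1\times h)\circ j,\dots)\le\sd\D(\pr\circ(f_1\times h),\dots).
\]
Proposition~\ref{prop:postcomp} then bounds this by $\sd\D(f_1\times h,\dots,f_m\times h)$.

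\emph{Upper bound.} Here I would compare pullbacks directly. Use the identification $(Y\times W)^I\cong Y^I\times W^I$. Under it, $\pi^{Y\times W}_m$ corresponds to $\pi^Y_m\times\pi^W_m$ after shuffling coordinates of $(Y\times W)^m\cong Y^m\times W^m$. So the pullback $Q'$ of $\pi^{Y\times W}_m$ along $(f_1\times h,\dots,f_m\times h)$ is
\[
Q'=\{(x,z,\alpha,\beta)\mid \alpha(t_i)=f_i(x),\ \beta(t_i)=h(z)\}.
\]
Let $c\colon X\times Z\to Q'$ record the constant path $\beta\equiv h(z)$. Given a section $\sigma$ of $q^X_{n+1}$ (respectively of $\ast^{n+1}_\bullet q^X$) with $\sigma(x)=(x,\sum\lambda_\phi\phi)$, define
\[
\sigma'(x,z)=\Bigl(x,z,\sum\lambda_\phi(\phi,c_{h(z)})\Bigr).
\]
This is a section of $q'_{n+1}$, and it is continuous because it is the composite of $\sigma\times\id_Z$ with the continuous map
\[
Q_{n+1}\times Z\to Q'_{n+1}
\]
induced fiberwise by $(x,\alpha,z)\mapsto(x,z,\alpha,c_{h(z)})$. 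More formally, $Q'\to X\times Z$ receives a map from the pullback of $q^X$ along $\pr_X\colon X\times Z\to X$, namely $(x,\alpha,z)\mapsto(x,z,\alpha,c_{h(z)})$, and this map commutes with the projections to $X\times Z$. The standard monotonicity of $\sd\secat$ under maps of fibrations over the same base (as in the proof of Proposition~\ref{prop: cd-D vs D}) and under pullback then gives
\[
\sd\D(f_1\times h,\dots,f_m\times h)\le\sd\secat(\pr_X^*q^X)\le\sd\secat(q^X)=\sd\D(f_1,\dots,f_m).
\]

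The only real care is in the distributional setting: checking that the fiberwise map $Q\times Z\to Q'$ induces a continuous map on measure spaces $E_k$. This follows from functoriality of the distributional construction with respect to maps of fibrations, recalled in Subsection~\ref{subsec: secat}. So I expect no genuine obstacle; the step most needing verification is that functoriality applies to the pullback along $\pr_X$.
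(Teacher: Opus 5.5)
Your proposal is correct and follows essentially the same route as the paper. The lower bound comes from $f_i=\pr\circ(f_i\times h)\circ j$ together with Propositions~\ref{prop:precomp} and~\ref{prop:postcomp}. The upper bound pairs each path in the support of a section with the constant path at $h(z)$; your factorization through $\pr_X^*q^X$ is just a repackaging of the paper's composite $\sigma\circ(\id\times c)\circ(s\times h)$.
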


\begin{proof}
    Let $\iota_1\colon X\hookrightarrow X\times Z$ and $\iota_2\colon Z\hookrightarrow X\times Z$ be the inclusions into the first and second factors, respectively, and let $\pr_1\colon Y\times W\to Y$ and $\pr_2\colon Y\times W\to W$ be the obvious projections.
    Then $f_i=\pr_1\circ (f_i\times h)\circ\iota_1=\pr_2\circ (h\times f_i)\circ\iota_2$. So, Propositions~\ref{prop:precomp} and~\ref{prop:postcomp} imply
    \[
    \sd\D(f_1,\dots,f_m)\le\min\{\sd\D(f_1\times h,\dots,f_m\times h),\  \sd\D(h\times f_1,\dots,h\times f_m)\}.
    \]
    For the converse, 
    let $\sd\D(f_1,\dots,f_m)=n$. There exists a map $s\colon X\to (Y^I)_n$, where for each $x\in X$ and $\phi\in\supp(s(x))$, we have $\phi(t_i)=f_i(x)$ for $1\le i \le m$. Let $c\colon W\to W^I$ be the map such that for each $w\in W$, we have $c(w)(t)=w$ for all $t\in [0,1]$. Also, the inclusion $\sigma\colon (Y^I)_n\times W^I\to (Y^I\times W^I)_n\cong((Y\times W)^I)_n$ is defined as 
    \[
\sigma\left(\sum\lambda_\phi\phi,\psi\right)=\sum \lambda_\phi\left(\phi,\psi\right).
    \]
    Take the composition $\tau:=\sigma\circ (\id\times c)\circ (s\times h)\colon X\times Z\to ((Y\times W)^I)_n$, where $\id$ is the identity map on $(Y^I)_n$. If $(x,z)\in X\times Z$ and $\rho\in\supp(\tau(x,z))$, then $\rho(t_i)=(f_i(x),h(z))=(f_i\times h)(x,z)$ for $1\le i \le m$. It follows by definition  that $\dD(f_1\times h,\dots,f_m\times h)\le n=\dD(f_1,\dots,f_m)$. The proof of the parallel inequality $\dD(h\times f_1,\dots,h\times f_m)\le n$ is symmetric. Finally, we note that the proofs of
    \[
    \max\{\D(f_1\times h,\dots,f_m\times h),\  \D(h\times f_1,\dots,h\times f_m)\}\le\D(f_1,\dots,f_m)
    \]
    are identical to the analogous corresponding proofs for $\dD$: one uses the iterated fiberwise join constructions there instead of the distributional constructions.
\end{proof}

\section{Cohomological lower bounds}\label{sec:cohombounds}

In this section, we obtain lower bounds to the $m$-th sequential (distributional) homotopic distance between maps in terms of the cohomology of the maps involved. 

Let us fix an arbitrary integer $m\ge 2$ and maps $f_i\colon X\to Y$ for $1\le i \le m$. For $k\ge 1$, let $\delta_k\colon Y\to SP^k(Y)$ be the diagonal map. Here, $SP^k(Y)$ is the \emph{$k$-th symmetric product of $Y$}, which is the orbit space of the permutation action of the symmetric group $\Sigma_k$ on the Cartesian product $Y^k$. It is convenient to write any element $[y_1,\dots,y_k]\in SP^k(Y)$ as a formal sum as follows: $\sum n_iy_i$, where $n_i\ge 1$ is an integer, $\sum n_i=k$, and $y_i\in Y$, subject to the equivalence $\ell_1y+\ell_2y=(\ell_1+\ell_2)y$. In particular, $\delta_k(y)=ky$ for each $y\in Y$. 

\begin{lemma}\label{lem: cover}
   If $\dD(f_1,\dots,f_m)<n$, then $X$ can be covered by $n$ sets $A_j$ such that for each $j$, $\delta_{n!}\circ f_{i|A_j}\simeq \delta_{n!}\circ f_{k|A_j}$ for $1\le i,k\le m$.
\end{lemma}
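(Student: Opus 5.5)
The approach is to stratify $X$ by the support size of a distributional section and, on each stratum, average the paths in the support. By definition, $\dD(f_1,\dots,f_m)<n$ gives a continuous section $s\colon X\to Q_n$ of $q^X_n$, where $q^X=(f_1,\dots,f_m)^*\pi^Y_m$ is as in \eqref{diag: main pullback}. So for each $x\in X$ the measure $s(x)$ is $\sum_{\phi}\lambda_\phi\phi$, with support of size at most $n$, and every path $\phi$ in the support satisfies $\phi(t_i)=f_i(x)$ for $1\le i\le m$. For $1\le j\le n$, I will set
\[
A_j=\{x\in X \mid |\supp(s(x))|=j\}.
\]
These sets are pairwise disjoint and cover $X$. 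They need not be open, which is why the statement only asks for sets.

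Fix $j$. On $A_j$ the unordered support $\{\phi_1(x),\dots,\phi_j(x)\}\subset Y^I$ should vary continuously, as a map $A_j\to SP^j(Y^I)$ or rather into the configuration space of $j$ distinct points modulo $\Sigma_j$. The reason is that on a stratum of fixed support size, the topology on $\B_n$ (either the Lévy--Prokhorov or the quotient topology) restricts to the topology of unordered $j$-tuples of distinct points with positive weights. I will then define $H\colon A_j\times I\to SP^{j}(Y)$ by
\[
H(x,t)=\sum_{l=1}^j \phi_l(x)(t),
\]
which is independent of the ordering and hence well defined and continuous. Rescaling time on the segment $[t_i,t_{i+1}]$, this gives a homotopy from $\delta_j\circ f_{i|A_j}$ to $\delta_j\circ f_{i+1|A_j}$. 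Concatenating, as in the proof of Lemma~\ref{lem: dist as secat}, yields $\delta_j\circ f_{i|A_j}\simeq \delta_j\circ f_{k|A_j}$ for all $i,k$.

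Next I will pass from $j$ to $n!$. Since $j\le n$, $j$ divides $n!$, and the multiplication map $\mu\colon SP^j(Y)\to SP^{n!}(Y)$, $\sum y_l\mapsto \tfrac{n!}{j}\sum y_l$, is continuous and satisfies $\mu\circ\delta_j=\delta_{n!}$. Composing the homotopy with $\mu$ then gives $\delta_{n!}\circ f_{i|A_j}\simeq\delta_{n!}\circ f_{k|A_j}$, as required.

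The main obstacle is justifying the continuity of the unordered support on each stratum $A_j$. Concretely, I must show that near $x\in A_j$, each support point of $s(x')$ for $x'\in A_j$ lies close to a unique support point of $s(x)$, with the matching continuous in $x'$. For the metric topology, I will argue as follows. The support points of $s(x)$ are separated by some $\epsilon>0$, and the weights are bounded below by some $\lambda>0$. Weak convergence of $s(x')$ to $s(x)$ then forces each $\epsilon/3$-ball around a support point of $s(x)$ to eventually carry mass near the corresponding weight. Because $s(x')$ also has exactly $j$ support points, each such ball contains exactly one of them. This local matching gives continuity into the unordered configuration space, which is the route I would try first; for the quotient topology the same statement is standard from the symmetric join description.
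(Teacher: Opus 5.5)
Your proposal is correct and is essentially the paper's proof. The paper stratifies $X$ by the same sets $A_j=\{x\in X \mid |\supp(s(x))|=j\}$ and uses the homotopy $H_{i,j}(x,t)=\sum_{\phi\in\supp(s(x))}\frac{n!}{j}\,\phi(t(t_{i+1}-t_i)+t_i)$ into $SP^{n!}(Y)$, which is your $SP^j$-valued sum followed by multiplication by $n!/j$. The only difference is that you justify the continuity of the unordered support on each stratum (via the L\'evy--Prokhorov estimate), a point the paper simply calls clear.
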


\begin{proof}
Since $\dD(f_1,\dots,f_m)<n$, there exists a map $s\colon X\to (Y^I)_n$ such that, given $x\in X$ and $\phi\in\supp(s(x))$, we have $\phi(t_i)=f_i(x)$ for each $i$. 
For $1\le j \le n$, define 
\[
A_j:=\left\{x\in X \ \middle|\ \left|\supp(s(x))\right|=j\right\}.
\]
Next, for each $i\in \{1,\dots,m-1\}$, define a map $H_{i,j}\colon A_j\times I\to SP^{n!}(Y)$ as follows:
\[
H_{i,j}(x,t)=\sum_{\phi\in\supp(s(x))}\frac{n!}{j}\ \phi(t(t_{i+1}-t_i)+t_i).
\]
Clearly, $H_{i,j}$ is a homotopy from $\delta_{n!}\circ f_{i|A_j}$ to $\delta_{n!}\circ f_{i+1|A_j}$ for $1\le i \le m-1$.
\end{proof}

Since the sets $A_i$ from Lemma~\ref{lem: cover} need not be open or closed, we present the lower bounds to $\dD(f_1,\dots,f_m)$ in Alexander--Spanier cohomology. We put $\ast$ in the superscript to denote maps induced in this cohomology.

\begin{theorem}\label{thm: lower bound}
    Let $R$ be a ring, $f_i\colon X\to Y$ be maps, and  $c_i\ge 1$ be integers for $1\le i\le m$ such that $\sum_{i=1}^mc_i=0$. If  $n\ge 1$ is such that 
    \[
\cu_R\left(\Im\left(\sum_{i=1}^mc_i\left(\delta_{n!}\circ f_i\right)^*\right)\right)\ge n,
    \]
    then $\dD(f_1,\dots,f_m)\ge n$.
\end{theorem}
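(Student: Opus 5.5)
We argue by contradiction, in the manner of the classical cup-length bound of~\cite{MM}. Note first that the hypothesis ``$c_i\ge 1$'' cannot be meant literally together with $\sum c_i=0$; we read it as: the $c_i$ are integers with $\sum_{i=1}^m c_i=0$. Suppose $\dD(f_1,\dots,f_m)<n$. By Lemma~\ref{lem: cover}, $X=A_1\cup\dots\cup A_n$, where on each $A_j$ the composites $g_i:=\delta_{n!}\circ f_i\colon X\to SP^{n!}(Y)$ satisfy $g_{i|A_j}\simeq g_{k|A_j}$ for all $i,k$.

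Now take classes $u_1,\dots,u_n$ with $u_\ell=\sum_i c_i g_i^*(v_\ell)$, where $v_\ell\in H^*(SP^{n!}(Y);R)$ (Alexander--Spanier). For each $j$, restrict $u_j$ to $A_j$. Since the $g_{i|A_j}$ are all homotopic, they induce the same map in Alexander--Spanier cohomology of the subspace $A_j$. Hence
\[
u_{j|A_j}=\Bigl(\sum_i c_i\Bigr)\, g_{1|A_j}^*(v_j)=0 .
\]
This is the only place where $\sum c_i=0$ enters.

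Next we lift $u_j$ to relative cohomology $H^*(X,A_j;R)$ using the long exact sequence of the pair, which exists in Alexander--Spanier cohomology for arbitrary subspaces. Cup products then give
\[
u_1\smile\cdots\smile u_n\in H^*\Bigl(X,\bigcup_j A_j;R\Bigr)=H^*(X,X;R)=0 .
\]
This contradicts $\cu_R\bigl(\Im\bigl(\sum_i c_i g_i^*\bigr)\bigr)\ge n$, so $\dD(f_1,\dots,f_m)\ge n$.

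The main obstacle is this relative cup product step. The sets $A_j$ are neither open nor closed, so we need a relative cup product
\[
H^*(X,A)\otimes H^*(X,B)\to H^*(X,A\cup B)
\]
valid for arbitrary subspaces. We also need homotopy invariance on the subspaces $A_j$, which is not immediate because the $A_j$ may be badly behaved (for instance, not paracompact). We would follow the standard route. Alexander--Spanier cochains are germs of functions, and a cochain vanishing on a neighbourhood of $A$ can be multiplied with one vanishing near $B$. To handle this, we would replace the $A_j$ by neighbourhoods, or invoke tautness of Alexander--Spanier cohomology when $X$ is paracompact, as in Spanier's treatment and the analogous argument in~\cite{DJ} for $\dcat$. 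We would then check that the vanishing $u_{j|A_j}=0$ persists. If tautness fails in general, we would add a paracompactness hypothesis on $X$.
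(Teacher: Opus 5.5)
Your proposal is correct and follows the paper's own proof step for step: the cover from Lemma~\ref{lem: cover}, the vanishing of each $u_j$ on $A_j$ from homotopy invariance together with $\sum_i c_i=0$, the lift to $H^*(X,A_j;R)$ via the Alexander--Spanier exact sequence of the pair, and the relative cup product landing in $H^*(X,X;R)=0$. Your reading of the hypothesis on the $c_i$ (arbitrary integers with zero sum) is the intended one. The paper justifies the relative cup product for the non-open, non-closed sets $A_j$ only by ``naturality of the cup product'' and does not address the tautness issue you raise, so your caveat concerns a point the paper itself leaves implicit; homotopy invariance on the $A_j$, by contrast, needs no extra hypothesis, since Alexander--Spanier cohomology satisfies the homotopy axiom for arbitrary pairs.
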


Here, for the subset $S$ of a cohomology ring with coefficients in $R$, $\cu_R(S)$ denotes the \emph{cup length} of $S$, i.e., the length of the longest non-trivial cup product(s) of positive-degree Alexander--Spanier cohomology classes in $S$. We note that $\Im(\sum c_i(\delta_{n!}\circ f_i)^*)$ may not be a ring, but its $\cu_R$ is still defined.

\begin{proof}
By our assumption, there exist classes $\alpha_j\in H^{k_j}(X;R)$ for $1\le j \le n$ such that $k_j\ge 1$, $\alpha_j=\sum c_i(\delta_{n!}\circ f_i)^*(\beta_j)$ for some $\beta_j\in H^{k_j}(SP^{n!}(Y);R)$, and $\alpha_1\cdots\alpha_n\ne 0$. Now, suppose $\dD(f_1,\dots,f_m)< n$, so that we have sets $A_j$ as in Lemma~\ref{lem: cover} for $1\le j \le n$. Fix any $j$ and consider the following commutative diagram, where the bottom row comes from the Alexander--Spanier cohomology long exact sequence corresponding to the pair $(X,A_j)$:
\[
\xymatrix@R=2pc@C=1.5pc{
& & H^{k_j}(SP^{n!}(Y);R) \ar[dr]^-{\theta_j} \ar[d]_-{\sum_{i=1}^mc_i(\delta_{n!}\circ f_i)^*} & &
\\
\cdots\ar[r]^-{}  & H^{k_j}(X,A_j;R) \ar[r]_-{\phi_j^*} & H^{k_j}(X;R)\ar[r]_-{\psi_j^*}& 
H^{k_j}(A_j;R) \ar[r]^-{} & \cdots.
}
\]
Here, $\phi_j\colon X\hookrightarrow (X,A_j)$ and $\psi_j\colon A_j\hookrightarrow X$ are the obvious inclusion maps, and $\theta_j:=\sum c_i(\delta_{n!}\circ f_i\circ\psi_j)^*$. Since $\delta_{n!}\circ f_i\circ \psi_j\simeq \delta_{n!}\circ f_{i+1}\circ \psi_j$ for $1\le i\le m-1$ (\emph{cf}. Lemma~\ref{lem: cover}) and $\sum c_i=0$, the map $\theta_j$ is trivial. In particular, 
\[
\psi_j^*(\alpha_j)=\psi_j^*\left( \sum_{i=1}^m c_i(\delta_{n!}\circ f_i)^*(\beta_j)\right)=\theta_j(\beta_j)=0.
\]
By exactness, there exists $\ov\alpha_j\in H^{k_j}(X,A_j;R)$ such that $\phi_j^*(\ov\alpha_j)=\alpha_j$. Writing $k=\sum_{j=1}^nk_j$, we get by the naturality of the cup product that
\[
0\cong H^k(X,X;R)\cong H^k\left(X,\bigcup_{j=1}^nA_j;R\right)\ni \ov\alpha_1\cdots\ov\alpha_n\ \longmapsto\ \alpha_1\cdots\alpha_n\ne 0,
\]
which is a contradiction. Therefore, $\dD(f_1,\dots,f_m)\ge n$.
\end{proof}

\begin{remark}\label{rem: dist lower bound}
Now, note that if $\D(f_1,\dots,f_m)<n$ for the original distance, then by definition, 
we have an \emph{open} cover $\{B_j\}_{j=1}^n$ of $X$ such that 
$f_{i|B_j}\simeq f_{k|B_j}$ for all $1\le i,k\le m$ and each $j$.
The proof of Theorem~\ref{thm: lower bound} then immediately yields the following 
lower bound to $\D(f_1,\dots,f_m)$ in singular cohomology:
\[
\D(f_1,\dots,f_m)\ge \cu_R\left(\Im\left(\sum_{i=1}^mc_if_i^*\right)\right),
\]
where again we take $\sum c_i=0$.
In~\cite{MM}, the above was obtained in the special case $m=2$ with $c_1=-c_2$.
\end{remark}
\begin{corollary}\label{cor: same lower bound}
    If $f_i\colon X\to Y$ are maps, $c_i$ are integers for $1\le i \le m$ such that $\sum_{i=1}^mc_i=0$, and $Y$ has the homotopy type of a finite CW complex, then in Alexander--Spanier cohomology, we have that
\[
\dD(f_1,\dots,f_m)\ge \cu_{\Q}\left(\Im\left(\sum_{i=1}^mc_if_i^*\right)\right).
\]
\end{corollary}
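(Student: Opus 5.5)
The plan is to deduce the corollary from Theorem~\ref{thm: lower bound} by showing that, rationally, classes pulled back from $Y$ can be realized as pullbacks from $SP^{n!}(Y)$ along $\delta_{n!}$. Set $n=\cu_{\Q}(\Im(\sum c_if_i^*))$; if $n=0$ there is nothing to prove, so assume $n\ge 1$. Fix classes $\beta_1,\dots,\beta_n\in \wt H^*(Y;\Q)$ such that the product of the $\alpha_j=\sum_i c_if_i^*(\beta_j)$ is nonzero. The goal is to find $\gamma_j\in H^*(SP^{n!}(Y);\Q)$ with $\delta_{n!}^*(\gamma_j)=\beta_j$. Then
\[
\sum_i c_i(\delta_{n!}\circ f_i)^*(\gamma_j)=\alpha_j,
\]
so $\cu_\Q(\Im(\sum_i c_i(\delta_{n!}\circ f_i)^*))\ge n$, and Theorem~\ref{thm: lower bound} gives $\dD(f_1,\dots,f_m)\ge n$.

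Before that, two technical points need handling.

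\emph{Homotopy invariance.} Since $Y$ has the homotopy type of a finite CW complex $K$, I will replace $Y$ by $K$. By homotopy invariance (Corollary~\ref{cor: homotopy-invariance}, with $k$ a homotopy equivalence), $\dD$ does not change, and $SP^{n!}$ preserves homotopy equivalences.

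\emph{Comparing cohomologies.} For a CW complex (and its symmetric products, which are again CW complexes), Alexander--Spanier cohomology agrees with singular cohomology. I will also work with the hypothesis $c_i\ge1$ relaxed to arbitrary integers summing to zero; the proof of Theorem~\ref{thm: lower bound} only uses $\sum c_i=0$.

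\emph{Main step: the transfer.} The key tool is the classical rational computation $H^*(SP^k(K);\Q)\cong H^*(K^k;\Q)^{\Sigma_k}$, obtained via the transfer for the finite quotient $\rho\colon K^k\to SP^k(K)$. Under this identification, $\rho^*$ is injective onto the invariants. Given $\beta\in H^*(K;\Q)$ with $k=n!$, I will take $\gamma$ to be the class corresponding to the symmetrized element
\[
\frac{1}{k}\sum_{l=1}^k \pr_l^*(\beta)\in H^*(K^k;\Q)^{\Sigma_k}.
\]
Since $\delta_k=\rho\circ\Delta_k$, where $\Delta_k\colon K\to K^k$ is the diagonal, we get
\[
\delta_k^*\gamma=\Delta_k^*\Bigl(\tfrac1k\sum_l \pr_l^*\beta\Bigr)=\beta.
\]
Hence $\delta_k^*$ is surjective in rational cohomology.

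The main obstacle is justifying $H^*(SP^k(K);\Q)\cong H^*(K^k;\Q)^{\Sigma_k}$ with the right naturality. I would cite the standard transfer theorem for finite group quotients of CW complexes (for example, Bredon's treatment of transformation groups), which requires rational coefficients so that the division by $|\Sigma_k|$ is possible. Finiteness of $K$ keeps everything within the category of CW complexes and the comparison of cohomology theories valid.
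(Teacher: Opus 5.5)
Your proposal is correct and follows the paper's proof. Both arguments reduce to Theorem~\ref{thm: lower bound} by showing that $\delta_{n!}^*\colon H^*(SP^{n!}(Y);\Q)\to H^*(Y;\Q)$ is surjective, so every class in $\Im(\sum_i c_if_i^*)$ already lies in $\Im(\sum_i c_i(\delta_{n!}\circ f_i)^*)$. The difference is that the paper quotes this surjectivity from the literature, whereas you prove it directly via the transfer isomorphism $H^*(SP^k(K);\Q)\cong H^*(K^k;\Q)^{\Sigma_k}$ and the symmetrized class $\tfrac1k\sum_l \pr_l^*\beta$; you also make explicit the reduction to a finite CW complex $K$ and the harmless typo $c_i\ge 1$.
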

\begin{proof}
For brevity, write 
\[
S:=\Im\left(\sum_{i=1}^m c_i f_i^*\right) \ \text{ and } \ S_k:=\Im\left(\sum_{i=1}^m c_i(\delta_{k}\circ f_i)^*\right)
\]
for each $k\ge 1$. Let $\alpha\in S\subset H^*(X;\Q)$, so that there exists $\beta\in H^*(Y;\Q)$ satisfying $\sum c_if_i^*(\beta)=\alpha$. Because of the assumption on $Y$, for each $k,j\ge 1$, the map $\delta_k^*\colon H^j(SP^k(Y);\Q)\to H^j(Y;\Q)$ is an epimorphism, see~\cite[Subsection~4.1]{DJ}. We can use this result here because Alexander--Spanier cohomology coincides with singular cohomology on (locally) finite CW complexes. In particular, there exists $\ov\beta_k\in H^*(SP^k(Y);\Q)$ such that $\delta_k^*(\ov\beta_k)=\beta$. This implies that $\alpha\in S_k$. It follows that $\cu_{\Q}(S)=\cu_{\Q}(S_k)$ for each $k$, so we are done in view of Theorem~\ref{thm: lower bound}.   
\end{proof}

Let $f\colon X\to Y$ be a map and $\pr_i\colon X^m\to X$ be the projection for each $i$. Taking $f_i=f\circ \pr_i\colon X^m\to Y$, we recover the known lower bounds to $\sd\TC_m(f)$ in view of Proposition~\ref{prop: recovering tcf}.

\begin{ex}
    Let $X=Y=U(2)$ be the compact Lie group of unitary $2\times 2$ matrices, $f=\id_X$, and $g\colon X\to X$ be the map $g(A)=A^*$, where $A^*$ is the conjugate transpose of $A\in X$. It is easy to show that $\cu_{\Q}(\Im(f^*-g^*))=2=\D(f,g)$, see~\cite[Example~5.4]{MM}. By Corollary~\ref{cor: same lower bound}, we then get $\dD(f,g)=2=\D(f,g)$.
\end{ex}

We will see more general instances of the sharpness of our cohomological lower bounds to $\sd\D$ in Section~\ref{sec:mapgroup}.

\section{Maps to $H$-spaces}\label{sec: h-spaces}

In this section, we consider maps to nice $H$-spaces and express the (diagonal) homotopic distances between them in terms of the (one-)categories of certain associated maps. We first recall some definitions.

An \emph{$H$-space} $G$ is a topological space that admits a \emph{multiplication map} $\mu\colon G\times G\to G$ and an \emph{identity element} $e\in G$ such that $\mu_{|G\times\{e\}}\simeq \id_G\simeq\mu_{|\{e\}\times G}$. Unless stated otherwise, $H$-spaces considered in this paper are \emph{not} homotopy-associative.

A \emph{group-like space} $G$ (also called an \emph{$H$-group}) is a 
topological space that is a 
homotopy-associative $H$-space with a multiplication map $\mu\colon G\times G\to G$, an identity element $e\in G$, and 
a compatible \emph{inversion map} $\eta\colon G\to G$. By homotopy-associativity, we mean $\mu\circ(\mu\times \id_G)\simeq \mu\circ (\id_G\times\mu)$, and by compatibility of the inversion, we mean 
$\mu\circ(\id_G\times\eta)\circ\Delta\simeq c_e\simeq 
\mu\circ(\eta\times \id_G)\circ\Delta$, where 
$\Delta\colon G\to G\times G$ is the diagonal map and 
$c_e\colon G\to G$ is the constant map to the identity $e\in G$. 

In this section, we consider path-connected CW $H$-spaces that are not necessarily homotopy-associative and hence not necessarily group-like (for example, $S^7$), and path-connected  group-like spaces that are not necessarily homotopic to CW complexes (for example, infinite-dimensional Hilbert spaces). For
us, the term ``group-like'' will include path-connected, but we will often emphasize this in
our statements.

\begin{remark}\label{rem: map F}
Let $G$ be a path-connected group-like space or a path-connected  CW $H$-space with a multiplication $\mu$, and let $\pr_i\colon G\times G\to G$ be the projections for $i\in\{1,2\}$. Then there exists a map $F\colon G\times G\to G$ such that 
\[
\mu\circ (F,\pr_2)\simeq \pr_1.
\]
Indeed, if $G$ is group-like, then the composition $\mu\circ (\id_G\times\eta)$ has this property because of the homotopy associativity of $\mu$. On the other hand, if $G$ is a CW $H$-space, then such $F$ is the \emph{unique} solution to the equation $[\pr_2]\bullet[F]=[\pr_1]$, where $\bullet$ is the ``product'' induced by $\mu$ on the set $[G\times G,G]$ of homotopy classes of maps $G\times G\to G$. This is because $[G\times G,G]$ is an algebraic loop due to James, see ~\cite{James loop}. Also, in the latter case, we still have $F\circ\Delta\simeq c_e$. Indeed, $[\pr_2\circ\Delta]\bullet[F\circ \Delta]=[\pr_1\circ\Delta]$ and $[\pr_2\circ\Delta]\bullet[c_e]=[\pr_1\circ\Delta]$ are two solutions to the same equation, so the uniqueness of solutions to equations~\cite{James loop} in the algebraic loop $[G,G]$ implies $[F\circ\Delta]=[c_e]$. 
\end{remark}

Moving forward, we take $F=\mu\circ (\id_G\times\eta)$ if $G$ is a path-connected group-like space. Otherwise, if $G$ is a path-connected  CW $H$-space, we take $F\colon G\times G\to G$ to be as in Remark~\ref{rem: map F}. 

Given $m\ge 2$, we can define in any case a map $F_m\colon G^m\to G^{m-1}$ as 
\begin{equation}\label{eq: map Fm}
F_m(x_1,x_2,\dots,x_m)=(F(x_1,x_2),F(x_2,x_3),\dots, F(x_{m-1},x_m)).
\end{equation}

\subsection{Homotopic distance and LS-category}\label{subsec: dist as cat}
We prove the first main result of this section. Throughout, we follow the above-mentioned conventions.

\begin{theorem}\label{thm: main1}
    Let $G$ be a path-connected group-like space or a path-connected CW $H$-space, and $m\ge 2$ be an integer. If $f_i\colon X\to G$ are maps for $1\le i \le m$, then 
    \[
    \sd\D(f_1,\dots,f_m)=\sd\cat(F_m\circ (f_1,\dots,f_m)),
    \]
    where $(f_1,\dots,f_m)\colon X\to G^m$ is the map $(f_1,\dots,f_m)(x)=
(f_1(x),\dots,f_m(x))$, and $F_m\colon G^m\to G^{m-1}$ is as in~\eqref{eq: map Fm}.
\end{theorem}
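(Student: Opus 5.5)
The plan is to identify the two fibrations whose (distributional) sectional categories appear on each side, up to fiber homotopy equivalence over $X$, and then invoke the homotopy invariance of $\sd\secat$ (as in \cite{Sch} and \cite[Lemma~2.3]{JO}). By Lemma~\ref{lem: dist as secat} and the definition of $\dD$, the left side is $\sd\secat((f_1,\dots,f_m)^*\pi^G_m)$. By the definition of $\sd\cat$ of a map, the right side is $\sd\secat\bigl((F_m\circ(f_1,\dots,f_m))^*p^{G^{m-1}}_2\bigr)$, the pullback of the based path fibration $P_0(G^{m-1})\to G^{m-1}$. Here I read $\cat$ of a map $X\to G^{m-1}$ as $\secat$ of the pullback of $p_2$ over it, as in Proposition~\ref{prop: recovering tcf}. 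Since both are pulled back along maps out of $X$, and pullback commutes with composition, it suffices to show that over $G^m$ the fibration $\pi^G_m\colon G^I\to G^m$ is fiber homotopy equivalent to $F_m^*p^{G^{m-1}}_2$. Pulling back along $(f_1,\dots,f_m)$ then gives fiber homotopy equivalent fibrations over $X$, and hence equal $\sd\secat$.

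I will reduce everything to $m=2$. Up to fiber homotopy equivalence, $\pi^G_m$ is the iterated fiber product of the $m-1$ fibrations $(\pr_i,\pr_{i+1})^*\pi^G_2$ over $G^m$: a path through the $m$ timestamps is the same, up to reparametrization via $\star$ from Subsection~\ref{subsec: concat}, as a tuple of $m-1$ consecutive paths. Likewise $F_m^*p^{G^{m-1}}_2$ is the fiber product of the pullbacks of $p^G_2$ along $F\circ(\pr_i,\pr_{i+1})$. Fiber homotopy equivalences are preserved by pullback and fiber product, so the key claim is that over $G\times G$ the free path fibration $\pi^G_2\colon G^I\to G^2$ is fiber homotopy equivalent to $F^*p^G_2$. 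Equivalently, the square with $G^I\to P_0(G)$ on top and $F\colon G^2\to G$ on the bottom should be a homotopy pullback.

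For the key claim I would use the classification of fibrations by their homotopy fibers, or more concretely construct comparison maps. The homotopy fiber of $\pi^G_2$ is $\Omega G$ and the homotopy fiber of $F^*p^G_2$ is also $\Omega G$, so I must produce a map over $G^2$ that is a weak equivalence on fibers. The natural map sends a path $\gamma$ from $x_1$ to $x_2$ to $t\mapsto F(\gamma(t),x_2)$, concatenated with a chosen null-homotopy of $F\circ\Delta$ (that is, $F(x_2,x_2)\simeq e$). This requires a homotopy $F\circ\Delta\simeq c_e$, which Remark~\ref{rem: map F} supplies in both cases. In the group-like case it comes from the compatibility of $\eta$, and in the CW $H$-space case from James' loop uniqueness.

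The main obstacle is checking that this comparison map is a fiber homotopy equivalence, meaning that on fibers the map $\Omega G\to\Omega G$, $\omega\mapsto F(\omega,x_2)$, is an equivalence. In the group-like case I would exhibit an inverse using $\mu$: send $(x_1,x_2,\beta)$, with $\beta$ a path from $F(x_1,x_2)$ to $e$, to the path $t\mapsto\mu(\beta(t),x_2)$. This runs from $\mu(F(x_1,x_2),x_2)\simeq x_1$ to $\mu(e,x_2)\simeq x_2$, and I would correct the endpoints using the homotopy $\mu\circ(F,\pr_2)\simeq\pr_1$ and the unit homotopy, applying the homotopy lifting property to fix endpoints exactly. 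In the non-associative CW case the fiber map is still an equivalence, because left translation by $x_2$ in a connected CW $H$-space is a homotopy equivalence, again via James' loop property. Dold's theorem then upgrades the fiberwise weak equivalence to a fiber homotopy equivalence, perhaps after CW approximation of $G^m$ and the use of homotopy pullbacks. The distributional case follows from the same fiber homotopy equivalence, since $\dsecat$ is a fiber homotopy invariant.
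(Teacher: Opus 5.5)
Your argument is sound, and the two comparison maps you describe are exactly the paper's. Your map $\gamma\mapsto F(\gamma(\cdot),x_2)\cdot L(x_2)$ is its $\phi\colon G^I\to P$, and your $\mu$-translated path with corrected endpoints is its $\theta\colon P\to G^I$. Two small differences in execution: the paper corrects endpoints by concatenating with $H\colon \pr_1\simeq\mu\circ(F,\pr_2)$ and the unit homotopy $K$, and it treats all $m$ at once with the $\star$-concatenation. Your reduction to $m=2$ via fiber products amounts to the same thing, since $G^I$ is homeomorphic over $G^m$ to the fiber product of the $(\pr_i,\pr_{i+1})^*\pi^G_2$.

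The real difference is what you demand of these maps. The paper never checks that $\theta$ and $\phi$ are mutually inverse. A map over $G^m$ from one fibration to the other survives pullback along $(f_1,\dots,f_m)$ and the join or measure constructions, so it carries sections to sections. Maps in both directions therefore already give both inequalities for $\sd\secat$, and your ``main obstacle'' is not actually needed for this theorem.

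If you do want the stronger conclusion, namely that the two fibrations over $X$ are fiber homotopy equivalent (so that every fiber-homotopy invariant agrees, not just $\sd\secat$), the cases differ:
\begin{itemize}
\item In the CW case, Dold's theorem essentially handles it, since path spaces have CW homotopy type by Milnor.
\item In the group-like case, $G$ need not have CW type. Your explicit inverse then only gives the equivalence once you verify that $\theta\circ\phi$ and $\phi\circ\theta$ are fiber homotopic to maps of the form $\alpha\mapsto u\cdot\alpha\cdot v$, for continuous families of loops $u,v$ over the base. This is true, but it is work the paper avoids.
\end{itemize}

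Two minor remarks.
\begin{itemize}
\item Your explicit inverse works verbatim for CW $H$-spaces, since it uses only $\mu\circ(F,\pr_2)\simeq\pr_1$ from Remark~\ref{rem: map F} and the unit homotopy. So Dold is dispensable there as well.
\item The fiber map involves $F(-,x_2)$, a homotopy inverse of \emph{right} (not left) translation by $x_2$. This map is homotopic to $\id_G$ simply because $G$ is path-connected, so James' loop structure is not needed at that step.
\end{itemize}
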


\begin{proof}
Let $P_0(G)$ be the space of paths terminating at the identity $e\in G$, and let $p_G\colon P_0(G)\to G$ be the evaluation-at-$0$ fibration. The space $P$ corresponding to the pullback of the map $F_m\colon G^m\to G^{m-1}$ along the $(m-1)$-th power fibration $p_G^{m-1}\colon P_0(G)^{m-1}\to G^{m-1}$ is
\begin{multline*}
P=\bigl\{\left(x_1,\dots,x_m,\alpha_1,\dots,\alpha_{m-1}\right)\in G^m\times P_0(G)^{m-1}\ \big|\ \alpha_i(1)=e \ \text{and}
\\
\alpha_i(0)=F(x_i,x_{i+1}) \ \text{for}\ 1\le i \le m-1 \bigr\},
\end{multline*}
and the pullback map $p:=F^*_mp^{m-1}_G$ is the projection fibration $p\colon P\to G^m$. 
(Note that there is an obvious homotopy equivalence over $G^{m-1}$ given by $P_0(G^{m-1}) \to 
P_0(G)^{m-1}$. This then is a fibre homotopy equivalence and all our invariants may be
defined using $P_0(G)^{m-1}$.)

For a given element $(x_1,\dots,x_m,\alpha_1,\dots,\alpha_{m-1})\in P$, define a path $\wt\alpha_i\in G^I$ for $1\le i \le m-1$ by 
\[
\wt\alpha_i(t)=\mu(\alpha_i(t),x_{i+1}).
\]
The homotopy $H\colon G\times G\to G^I$ from the map $\pr_1$ to $\mu\circ (F,\pr_2)$, and the 
homotopy $K\colon G\to G^I$ from $\mu_{\{e\}\times G}$ to $\id_G$ then show that 
$H(x_i,x_{i+1})$ is a path from $x_i$ to $\wt\alpha_i(0)$, and $K(x_{i+1})$ is a path 
from $\wt\alpha_i(1)$ to $x_{i+1}$. Thus, the concatenation $\wh\alpha_i:=H(x_i,x_{i+1})\cdot \wt\alpha_i\cdot K(x_{i+1})$ is a path from $x_i$ to $x_{i+1}$ for each $i$. Now, define the concatenation
\[
\wh\alpha:=\wh\alpha_1\star\wh\alpha_2\star\cdots\star\wh\alpha_{m-1},
\]
from Subsection~\ref{subsec: concat}, where we set $a_i=\tfrac{1}{t_{i+1}}$ for 
$1\le i\le m-1$. Then the map $\theta\colon P\to G^I$ given by
\[
\theta\left(x_1,\dots,x_m,\alpha_1,\dots,\alpha_{m-1}\right)=\wh\alpha
\]
satisfies $\pi^G_m\circ\theta= p$, where $\pi^G_m\colon G^I\to G^m$ evaluates each path in $G$ at timestamps $t_i$ for $1\le i\le m$. 

Next, for a given $\gamma\in G^I$, define a path $\gamma_i\in G^I$ for $1\le i \le m-1$ such that 
\[
\gamma_i(t)=F(\gamma(t(t_{i+1}-t_i)+t_i),\gamma(t_{i+1})).
\]
It follows from Remark \ref{rem: map F} that
there is a homotopy $L\colon G\to G^I$ from $F\circ\Delta$ to $c_e$. Indeed, this is obvious if $G$ is a group-like space since then we have $F=\mu\circ (\id_G\times\eta)$ by our initial choice, and this is explained in Remark~\ref{rem: map F} for the case of CW $H$-spaces. Thus, the concatenation 
$\gamma_i':=\gamma_i\cdot L(\gamma(t_{i+1}))$ is a path from $F(\gamma(t_i),\gamma(t_{i+1}))$ to $e$. Then the map $\phi\colon G^I\to P$ given by
\[
\phi(\gamma)=\left(\gamma(t_1),\dots,\gamma(t_m),\gamma_1',\dots,\gamma'_{m-1}\right)
\]
is well-defined and satisfies $p\circ\phi=\pi^G_m$. Therefore, we have the following commutative diagram, where the squares on the left and the right are pullbacks:
\begin{equation*}\label{diag: big diag}
    \xymatrix@C=4pc{
Q \ar[r]^-{b} \ar[d]_-{q^X} & G^I \ar@<.5ex>[r]^-{\phi} \ar[d]^-{\pi^G_m}  & P \ar@<.5ex>[l]^-{\theta} \ar[d]^-{p} \ar[r]^-{a} & P_0(G)^{m-1} \ar[d]^-{p_G^{m-1}}
\\
X \ar[r]^-{(f_1,\dots,f_m)} & G^m \ar@{=}[r] & G^m \ar[r]^-{F_m} & G^{m-1}.
    }
\end{equation*}
Note that the iterated fiberwise join and distributional constructions on the left and right squares above induce their respective (homotopy) pullbacks, and such constructions on the middle square induces a (homotopy) commutative square~\cite{Sch,JO}, i.e., $p_k\simeq F_m^*(p_G^{m-1})_k$, $(q^X)_k\simeq (f_1,\dots,f_m)^*(\pi^G_m)_k$, $p_k\circ \phi_k\simeq(\pi^G_m)_k$, and $(\pi^G_m)_k\circ\theta_k\simeq p_k$ for each $k$
for the distributional maps, and similarly, for the analogous fiberwise joined maps. We do the rest of the proof for $\dsecat$, noting that the exact same idea will work for $\secat$, using the iterated 
join construction instead of the distributional construction (so in the proven inequalities below, 
we write $\sd\secat$).

If $r\colon X\to (P_0(G)^{m-1})_k$ is a lift of $F_m\circ (f_1,\dots,f_m)$ along $(p_G^{m-1})_k$, then by the universal property of homotopy pullbacks, there exists a whisker map $t\colon X\to P_k$ such that $p_k\circ t\simeq (f_1,\dots,f_m)$. Taking $\theta_k\circ t\colon X\to (G^I)_k$, we get a whisker map $\sigma\colon X\to Q_k$ such that $(q^X)_k\circ\sigma\simeq\id_X$. Indeed, here we used the fact that $(\pi^G_m)_k\circ\theta_k\simeq p_k$. Since $(q^X)_k$ is a fibration, we get a true section. Therefore, 
\begin{align*}
\sd\D(f_1,\dots,f_m)=\sd\secat(q^X) & \le \sd\secat((F_m\circ (f_1,\dots,f_m))^*p_G^{m-1})
\\
& =\sd\cat(F_m\circ (f_1,\dots,f_m)).
\end{align*}
Conversely, if $s\colon X\to Q_k$ is a section of $(q^X)_k$, then 
\[
a_k\circ \phi_k\circ b_k\circ s\colon X \to (P_0(G)^{m-1})_k
\]
is a homotopy lift of $F_m\circ (f_1,\dots,f_m)$ along $(p_G^{m-1})_k$, which yields a true lift since the latter is a fibration. Therefore, 
\[
\sd\secat((F_m\circ (f_1,\dots,f_m))^*p_G^{m-1})\le \sd\secat(q^X).
\]
Hence, we have $\sd\D(f_1,\dots,f_m)=\sd\cat(F_m\circ (f_1,\dots,f_m))$.
\end{proof}

Theorem~\ref{thm: main1} is a vast generalization of several well-known results and computations from the literature. Before specifying some of them, we recall fundamental properties of $\cat(f)$ from~\cite{CLOT} and their natural distributional analogues.

\begin{proposition}\label{prop:catfprops}
For a map $f\colon X \to Y$, we have that
\begin{enumerate}
\item $\sd\cat(f) \leq \min\{\sd\cat(X),\, \sd\cat(Y)\}$;
\item If $g\colon Y \to Z$, then $\sd\cat(g\circ f) \leq \min\{\sd\cat(f),\,\sd\cat(g)\}$;
\item If $f \simeq g$, then $\sd\cat(f)=\sd\cat(g)$;
\item If $f$ is a homotopy retraction, then $\sd\cat(f)=\sd\cat(Y)$.
\end{enumerate}
\end{proposition}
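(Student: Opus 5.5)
The plan is to work throughout with the definition $\sd\cat(f)=\sd\secat(f^*p^Y_2)$, where $p^Y_2\colon P_0(Y)\to Y$ is the path fibration, together with two standard facts about $\sd\secat$. The first is that $\sd\secat(h^*p)\le\sd\secat(p)$ for any map $h$, which is already used before Proposition~\ref{prop: (co)domain}. The second is homotopy invariance, as in~\cite{Sch} and~\cite[Lemma~2.3]{JO}. Proposition~\ref{prop: recovering tcf} identifies $\sd\cat(f)=\sd\D(f,\ast)$, which lets us transport the results of Section~\ref{sec:compositions}. I will prove (3) and (2) first and then deduce (1) and (4) from them.

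\textbf{Item (3).} If $f\simeq g$, then $(f,\ast)\simeq(g,\ast)$, so the pullbacks $f^*p_2^Y$ and $g^*p_2^Y$ are fibre homotopy equivalent. Homotopy invariance of $\sd\secat$ then gives $\sd\cat(f)=\sd\cat(g)$. Alternatively, one can say $\sd\D(f,\ast)=\sd\D(g,\ast)$ because fibrewise (distributional) sections transport along fibre homotopy equivalences.

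\textbf{Item (2).} Write $g\circ f$ with constant map $\ast_Z$. For the bound by $\sd\cat(g)$, apply Proposition~\ref{prop:precomp} with $h=f$:
\[
\sd\cat(g\circ f)=\sd\D(g\circ f,\ast\circ f)\le\sd\D(g,\ast)=\sd\cat(g).
\]
For the bound by $\sd\cat(f)$, apply Proposition~\ref{prop:postcomp} with $k=g$, using $g\circ\ast_Y=\ast_Z$ after choosing basepoints so that $g(y_0)=z_0$:
\[
\sd\cat(g\circ f)=\sd\D(g\circ f,g\circ\ast)\le\sd\D(f,\ast)=\sd\cat(f).
\]

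\textbf{Item (1).} This follows from (2) applied to $f=\id_Y\circ f=f\circ\id_X$, since $\sd\cat(\id_Y)=\sd\cat(Y)$ and $\sd\cat(\id_X)=\sd\cat(X)$ by definition.

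\textbf{Item (4).} Suppose $f$ is a homotopy retraction, so there is $s\colon Y\to X$ with $f\circ s\simeq\id_Y$. Then (3) and (2) give
\[
\sd\cat(Y)=\sd\cat(f\circ s)\le\sd\cat(f)\le\sd\cat(Y),
\]
where the last inequality is (1).

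\textbf{Expected obstacle.} The main subtlety is basepoint bookkeeping in (2) and (3). The definition uses based maps to $y_0$, so I need $g$ to be based, or else I must note that $\sd\cat$ is independent of the basepoint for path-connected $Y$. Changing the basepoint replaces $p^Y_2$ by a fibre homotopy equivalent fibration, obtained by concatenating with a fixed path. Concatenation by a fixed path, applied elementwise in supports, preserves the measure and join constructions, so the distributional case goes through exactly like the classical one.
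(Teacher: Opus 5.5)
Your proof is correct, but it takes a different route from the paper. The paper gives no argument for this proposition: it recalls (1)--(4) as the standard properties of $\cat(f)$ from~\cite{CLOT} and states that the distributional versions are their natural analogues.

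You instead derive all four items inside the paper's own framework:
\begin{enumerate}
\item (2) comes from Propositions~\ref{prop:precomp} and~\ref{prop:postcomp} with $m=2$, via the identification $\sd\cat(f)=\sd\D(f,\ast)$ of Proposition~\ref{prop: recovering tcf};
\item (3) comes from fibre homotopy invariance of $\sd\secat$;
\item (1) and (4) then follow formally from (2) and (3).
\end{enumerate}
None of these ingredients uses Proposition~\ref{prop:catfprops}, so there is no circularity.

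What your route buys is an actual proof of the distributional half. The open-cover arguments behind the classical statements in~\cite{CLOT} do not carry over to $\dcat$, which has no open-set formulation. Your argument uses only maps between pullback fibrations, so it handles $\cat$ and $\dcat$ uniformly.

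The detour through $\sd\D$ is legitimate but not strictly needed. For $m=2$, the two composition propositions reduce to two direct facts: the identity $(g\circ f)^*p^Z_2=f^*(g^*p^Z_2)$, and the fibre map $P_0(Y)\to P_0(Z)$, $\phi\mapsto (g\circ\phi)\cdot\omega$, for a fixed path $\omega$ from $g(y_0)$ to $z_0$. You could argue with $\sd\secat$ directly from these.

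Your basepoint caveat is the right one. Replacing $g\circ\ast_Y$ by $\ast_Z$ needs $Z$ to be path-connected, which is the standing convention for category. It also needs $\sd\D$ to be unchanged when an individual map is replaced by a homotopic one, which is the same fibre-homotopy-equivalence fact you already use for (3).
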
 
Let us now make the following observations.
\begin{enumerate}[(i)]
\item In the special case $X=G\times G$, it was shown in~\cite[Theorem~4.1]{MM} that $\D(f_1,f_2)\le\cat(G)$. This is a direct corollary to Theorem~\ref{thm: main1} in the case $m=2$ since $F_2\circ (f_1,f_2)\colon G\times G\to G$ in view of Proposition~\ref{prop:catfprops} (1).

\item It follows from the proofs in~\cite{LS,Ja1} that $\sd\TC_m(G)=\sd\cat(F_m)$. This is the special case $X=G^m$ and $f_i=\pr_i\colon G^m\to G$ of Theorem~\ref{thm: main1}  in view of Proposition~\ref{prop: recovering tcf}.

\item If $G$ is group-like, then taking $\delta=\mu\circ(\eta\times\id_G)$, we recover the computation $\D(\id_G,\eta)=\D(\mu,\delta)$ from~\cite{MM} in a somewhat different way. Indeed, for $F_2=F:=\mu\circ(\id_G\times\eta)$, we get $F\circ (\id_G,\eta)(x)\simeq \mu(x,x)\simeq F\circ (\mu,\delta)(x,y)$ for all $x,y\in G$, and thus the following homotopy commutative diagram:
\begin{equation}\label{diag: antiparallel diagonal arrows}
\xymatrix{
G\times G 
\ar[rr]^-{F\circ (\mu, \delta)} \ar@<.5ex>[dr]^-{\pr_1} && G \\
& G, \ar@<.5ex>[ul]^-{i_1} \ar[ur]_-{F\circ (\id_G, \eta)} 
}    
\end{equation}
where $i_1(x)=(x,e)$. Of course, (2) and (3) in Proposition~\ref{prop:catfprops} give
\[
\sd\cat(F\circ (\mu, \delta))\le \sd\cat(F\circ (\id_G, \eta))\le \sd\cat(F\circ (\mu, \delta)).
\]
But now we are done due to Theorem~\ref{thm: main1} as the quantity in the middle is $\sd\D(\id_G, \eta)$ and the other quantity is $\sd\D(\mu, \delta)$.

\item For $k\ge 2$, define $\mu_k\colon G\to G$ as the composition of $\mu\circ\Delta\colon G\to G$ with itself $(k-1)$-times, and set $\mu_0=c_e$ and $\mu_1=\id_G$. If $G$ is group-like, we can similarly recover the computation $\D(\mu_{k-\ell},c_e)=\D(\mu_k,\mu_{\ell})$ from~\cite{MM} for $k\ge\ell$. Indeed, it is not difficult to see that for any $x\in G$, we have
\[
F\circ (\mu_k,\mu_{\ell})(x)=\mu(\mu_k(x),\eta(\mu_\ell(x)))\simeq \mu_{k-\ell}(x)\simeq \mu(\mu_{k-\ell}(x),\eta(e))=F\circ (\mu_{k-\ell},c_e)(x).
\]
So, Theorem~\ref{thm: main1} and Proposition~\ref{prop:catfprops} (3) give the equalities
\[
\sd\D(\mu_k,\mu_\ell)=\sd\cat(F\circ (\mu_k,\mu_{\ell}))=\sd\cat(F\circ (\mu_{k-\ell},c_e))=\sd\D(\mu_{k-\ell},c_e).
\]
\end{enumerate}

We conclude this subsection with the following new example.

\begin{ex}\label{exam:conjdist}
For a group-like space $G$ and an element $g\in G$, let $f_g\colon G \to G$ be defined as $f_g(h)=F(\mu(g,h),g)=\mu(\mu(g,h),\eta(g))$.
For any $k\in G$, if $c_{k}\colon G\to G$ denotes the constant map to $k$, then we claim that
\[
\sd\D(f_g,c_{\eta(g)})=\sd\cat(G)=\sd\D(\id_G,c_e).
\]
Note that since $F=\mu\circ (\id_G\times\eta)$, we have for any $h\in G$ that
\[
F\circ (f_g,c_{\eta(g)})(h)=\mu(\mu(\mu(g,h),\eta(g)),\eta(\eta(g)))\simeq \mu(g,h)=L_g(h),
\]
where the relation holds because of the properties of the homotopy-associative multiplication $\mu$, and where $L_g\colon G\to G$ is the multiplication map $L_g(h)=\mu(g,h)$, which is a homotopy equivalence. Therefore, by Theorem~\ref{thm: main1}, parts (3) and (4) in Proposition~\ref{prop:catfprops}, and Proposition~\ref{prop: recovering tcf}, we have that
\[
\sd\D(f_g,c_{\eta(g)})=\sd\cat(F\circ (f_g,c_{\eta(g)}))=\sd\cat(L_g)=\sd\cat(G)=\sd\D(\id_G,c_e).
\]
\end{ex}

\subsection{$\cD$-homotopic distance and one-category}
We now prove the second main result of this section, which is analogous to Theorem~\ref{thm: main1}. We first note the following.

\begin{remark}\label{rem: new map F}
If $G$ is a path-connected CW $H$-space with multiplication $\mu$ and identity $e$, then the algebraic loop structure on $[G,G]$ due to James~\cite{James loop} shows that there exists an ``inversion'' map $\eta'\colon G\to G$ such that 
\[
\mu\circ (\id_G\times\eta')\circ\Delta\simeq c_e.
\]
Let us set $F'=\mu\circ (\id_G\times\eta')\colon G\times G\to G$. We recall from Remark~\ref{rem: map F} that such a map $F'=F=\mu\circ(\id_G\times\eta)$ exists if $G$ is group-like (without necessarily having the homotopy type of a CW complex) with its inversion $\eta$. 
\end{remark}

As in~\eqref{eq: map Fm}, we can define for each $m\ge 2$ a map $F'_m\colon G^m\to G^{m-1}$ as follows using the map $F'\colon G\times G\to G$ from Remark~\ref{rem: new map F}:
\begin{equation}\label{eq: new map Fm}
F'_m(x_1,x_2,\dots,x_m)=(F'(x_1,x_2),F'(x_2,x_3),\dots, F'(x_{m-1},x_m)).
\end{equation}

\begin{theorem}\label{thm: main2}
    Let $G$ be a path-connected  group-like space or a path-connected  CW $H$-space and $m\ge 2$ be an integer. If $f_i\colon X\to G$ are maps for $1\le i \le m$, then 
    \[
    \sd\D^{\cD}(f_1,\dots,f_m)=\sd\cat_1(F'_m\circ (f_1,\dots,f_m)),
    \]
    where $F'_m\colon G^m\to G^{m-1}$ is the map defined in~\eqref{eq: new map Fm}.
\end{theorem}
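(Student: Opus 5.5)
The plan is to mimic the proof of Theorem~\ref{thm: main1}, with the path fibration $p_G^{m-1}$ replaced by the universal covering $\wt p^{G^{m-1}}=(\wt p^G)^{m-1}\colon \wt G^{m-1}\to G^{m-1}$ and $\pi^G_m$ replaced by $\wh\pi^G_m\colon \wh G^m\to G^m$. By Lemma~\ref{lem: cD-dist as secat} and Definition~\ref{def: d-diagonal dist}, the left side is $\sd\secat((f_1,\dots,f_m)^*\wh\pi^G_m)$, and by definition the right side is $\sd\secat((F'_m\circ(f_1,\dots,f_m))^*\wt p^{G^{m-1}})$. Since pullbacks compose, the latter is $\sd\secat$ of the pullback along $(f_1,\dots,f_m)$ of the covering $p':=(F'_m)^*\wt p^{G^{m-1}}\colon P'\to G^m$. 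It therefore suffices to show that $p'$ and $\wh\pi^G_m$ are fiber homotopy equivalent coverings over $G^m$, or at least that there are maps of coverings over $\id_{G^m}$ in both directions. The role of $\theta,\phi$ in the earlier proof is then played by these covering maps, and the rest of the argument (passing to $\ast^k_\bullet$ and $(\cdot)_k$, composing sections, and applying homotopy invariance of $\sd\secat$ as in~\cite{Sch,JO}) is verbatim.

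Since both are coverings of $G^m$, which is path-connected and has $\pi=\pi_1(G)$ abelian (true for any $H$-space), maps between them over $G^m$ are controlled by images of $\pi_1$. Concretely, a component of $P'$ corresponds to the subgroup $\ker\bigl((F'_m)_\#\colon \pi^m\to\pi^{m-1}\bigr)$ (possibly after noting that $P'$ might be disconnected, as in Remark~\ref{rem:surject}). The key algebraic step is to compute $(F')_\#\colon\pi\times\pi\to\pi$. Using that on $\pi_1$ of an $H$-space the multiplication induces addition, $\mu_\#(a,b)=a+b$, and the relation $\mu\circ(\id_G\times\eta')\circ\Delta\simeq c_e$ from Remark~\ref{rem: new map F}, I will get $\eta'_\#=-\id$ and so $F'_\#(a,b)=a-b$. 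Hence $(F'_m)_\#(a_1,\dots,a_m)=(a_1-a_2,\dots,a_{m-1}-a_m)$, a surjection whose kernel is exactly the diagonal $\Delta<\pi^m$. Thus each connected component of $P'$ is a connected covering of $G^m$ corresponding to $\Delta$, i.e.\ isomorphic to $\wh G^m$ over $G^m$. Since $(F'_m)_\#$ is surjective, $P'$ is in fact connected: by the exact sequence for the pullback, $\pi_0(P')$ is the cokernel of $(F'_m)_\#$. So $P'\cong \wh G^m$ as coverings of $G^m$, and the whole pullback square for $\sd\cat_1(F'_m\circ(f_1,\dots,f_m))$ can be merged with~\eqref{diag: second main pullback}, exactly as in the proof of Proposition~\ref{prop: recovering cd-tcf}, giving the equality.

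I expect the main obstacle to be covering-space technicalities rather than the algebra. First, $G$ must admit a universal cover, which requires it to be locally path-connected and semi-locally simply connected; this is automatic for CW $H$-spaces, but for general group-like spaces it is implicit in the definition of $\cat_1$ and $\wh\pi^G_m$, so I will assume it. Second, I need the classification of coverings up to isomorphism over $G^m$ by conjugacy classes of subgroups. Because $\pi^m$ is abelian, conjugacy is harmless. Third, I must make sure that the basepoints are handled so that $\ker(F'_m)_\#$ is computed at a point of $P'$ lying over $(e,\dots,e)$; there $F'(e,e)\simeq e$ and the chosen lift is consistent.
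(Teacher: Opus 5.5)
Your argument is correct and is essentially the paper's proof. Both compute $(F'_m)_\#(a_1,\dots,a_m)=(a_1-a_2,\dots,a_{m-1}-a_m)$, which is onto with kernel $\Delta$, so the pullback of the universal cover of $G^{m-1}$ along $F'_m$ is $\wh G^m$, and both then paste this with the square~\eqref{diag: second main pullback}. The only cosmetic differences are that the paper exhibits the identification via a lift $\wt F'_m\colon \wh G^m\to\wt G^{m-1}$ that is bijective on fibers and computes $F'_\#$ through $H_1$, whereas you use the classification of coverings and $\eta'_\#=-\id$.
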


\begin{proof}
Set $\pi:=\pi_1(G)$. If $F'\colon G\times G\to G$ is a map as in Remark~\ref{rem: new map F}, then note that by its definition, $F'$ induces on first homology the map $F'_{*}(a,b)=a-b$. But since $\pi_1(G)\cong H_1(G)$ for the path-connected  $H$-space $G$, we see that the induced map of fundamental groups is $F'_{\#}(a,b)=a-b$. Therefore, the map $(F_m')_\#\colon \pi^m\to \pi^{m-1}$ induced by the map $F_m'$ from~\eqref{eq: new map Fm} satisfies
\[
(F_m')_\#(a_1,a_2,\dots,a_m)=(a_1-a_2,a_2-a_3,\dots,a_{m-1}-a_m).
\]
Clearly, $\Ker((F_m')_\#)=\Delta\subset \pi^m$, which is the diagonal subgroup. Therefore, we can lift the map $F_m'\colon G^m\to G^{m-1}$ to a map $\wt F'_m\colon \wh G^m\to \wt G^{m-1}$ in the diagram
\[
\xymatrix@C=2pc{
\wh G^m \ar[r]^-{\wt F_m'} \ar[d]_-{\wh\pi^G_m} & \wt G^{m-1} \ar[d]^-{p}
\\
G^m\ar[r]^-{F_m'}& G^{m-1},
}
\]
where $p\colon \wt G^{m-1}\to G^{m-1}$ is the universal covering map. It is easy to check that $\Im((F'_m)_\#)=\pi^m/\Delta\cong\pi^{m-1}$,  and this in turn shows that $\wt F'_m$ restricted to each fiber $\pi^m/\Delta\cong\pi^{m-1}$ is a bijection. This then implies that the diagram above
is a (homotopy) pullback. We append the above pullback from the left using the 
(homotopy) pullback from~\eqref{diag: second main pullback} to get the following commutative diagram
\[
\xymatrix@C=2cm{
\wh Q \ar[r]^-{} \ar[d]_-{\wh q^X} & \wh G^m \ar[r]^-{\wt F_m'} \ar[d]_-{\wh\pi^G_m} & \wt G^{m-1} \ar[d]^-{p}
\\
X \ar[r]^-{(f_1,\dots,f_m)} & G^m\ar[r]^-{F_m'}& G^{m-1},
} 
\]
where the large rectangle is a homotopy pullback, see~\cite{Mather}. By definition (\emph{cf.} Lemma~\ref{lem: cD-dist as secat}), we have $\sd\D^{\cD}(f_1,\dots,f_m)=\sd\secat(\wh q^X)$. Also, the above homotopy pullback gives
\[
\sd\secat(\wh q^X) = \sd\secat((f_1,\dots,f_m)^*F'_m ) = \sd\cat_1(F'_m\circ(f_1,\dots,f_m)).
\]
This completes the proof.
\end{proof}

\begin{remark}
    Suppose we define a different map $F''_m\colon G^m\to G^{m-1}$ as follows:
    \[
    F''_m(x_1,\dots,x_{m-1},x_m)=(F'(x_1,x_m), F'(x_2,x_m), \dots, F'(x_{m-1},x_m)),
    \]
    where $F'$ is from Remark~\ref{rem: new map F}. Then again $\Ker((F''_m)_\#)=\Delta$, so our proof of Theorem~\ref{thm: main2} modifies to imply $\sd\D^{\cD}(f_1,\dots,f_m)=\sd\cat_1(F''_m\circ (f_1,\dots,f_m))$.
\end{remark}

Just like Theorem~\ref{thm: main1}, Theorem~\ref{thm: main2} recovers several results and computations from the literature. Since $\sd\cat_1(f)$ satisfies the same properties mentioned in Proposition~\ref{prop:catfprops} for $\sd\cat(f)$ (see~\cite{Op,OS,JO}), we can see the following.

\begin{enumerate}[(a)]
\item In the special case $X=G\times G$, we have $\sd\D^{\cD}(f_1,f_2)\le\sd\cat_1(G)$ due to Theorem~\ref{thm: main2} since $F_2'\circ (f_1,f_2)\colon G\times G\to G$.

\item It was shown in~\cite{PS,JO} that $\sd\TC_m^{\cD}(G)=\sd\cat_1(G^{m-1})$. This is recovered by the special case $X=G^m$ and $f_i=\pr_i\colon G^m\to G$ of Theorem~\ref{thm: main2} in view of Proposition~\ref{prop: recovering cd-tcf}.

\item If $G$ is group-like, then $\sd\D^{\cD}(\id_G,\eta)=\sd\D^{\cD}(\mu,\delta)$, where $\delta=\mu\circ(\eta\times\id_G)$. This follows by applying Theorem~\ref{thm: main2} and properties of $\sd\cat_1(f)$ to~\eqref{diag: antiparallel diagonal arrows}.

\item For $k\ge 2$, define $\mu_k\colon G\to G$ as the composition of $\mu\circ\Delta\colon G\to G$ with itself $(k-1)$-times, and set $\mu_0=c_e$ and $\mu_1=\id_G$. If $G$ is group-like, then $\sd\D^{\cD}(\mu_{k-\ell},c_e)=\sd\D^{\cD}(\mu_k,\mu_{\ell})$ by proceeding as in (iii) of Subsection~\ref{subsec: dist as cat}.

\item For a group-like space $G$ and an element $g\in G$, if $f_g\colon G \to G$ is defined as $f_g(h)=F(\mu(g,h),g)=\mu(\mu(g,h),\eta(g))$, then $\sd\D^{\cD}(f_g,c_{\eta(g)})=\sd\D(\id_G,c_e)$. We can just follow the steps from Example~\ref{exam:conjdist} and use the analogous results.
\end{enumerate}

\section{Maps to group-like spaces}
In this section, we focus specifically on maps from ANRs to group-like spaces. Our goal is to provide new upper bounds to the LS-category of such maps and to the sequential homotopic distance between them. We will do this using results from Subsection~\ref{subsec: dist as cat} and by discussing some Whitehead-type upper bounds given by the LS-category and sequential topological complexity of ANRs. Note that at the moment, we do not know the distributional analogues of these results because we only have an ``open set'' formulation in our current proofs. 

Recall that a group-like space $G$ is a homotopy-associative $H$-space with a compatible inversion map $\eta\colon G\to G$. We note that a path-connected CW homotopy-associative $H$-space is a group-like space, see~\cite[Sections~III.4 \& X.2]{Wh}. Indeed, if $\mu\colon G\times G\to G$ is the multiplication, then a shear map $\phi\colon G\times G\to G\times G$ given by $\phi(x,y)=(x,\mu(x,y))$ is a weak homotopy equivalence and hence a homotopy equivalence, from which it follows that $G$ is group-like.

\subsection{Whitehead-type theorems}\label{subsec: whitehead}

It is a classical result due to G.~Whitehead (see \cite[Chapter~X]{Wh})
that, for a group-like space $G$ and an ANR $X$, the homotopy classes of maps $X \to G$ form a nilpotent group $[X,G]$ of nilpotency class bounded above by $\cat(X)$. In this subsection, we revisit the proof of the above theorem and show its $\TC_m$-analogue for homotopy classes of some natural maps $X^m\to G$.

Let us recall some notions first. A topological group $H$ is \emph{nilpotent} if it has a central series
\[
H=\Gamma_1(H) \trianglerighteq \Gamma_2(H) \trianglerighteq \cdots \trianglerighteq \Gamma_c(H) \trianglerighteq \Gamma_{c+1}(H)=\{1\}.
\]
This means that $\Gamma_i(H)/\Gamma_{i+1}(H)$ is in the center of $H/\Gamma_{i+1}(H)$, which is equivalent to saying that $hkh^{-1}k^{-1} \in \Gamma_{i+1}(H)$ if $h\in H$ and $k \in \Gamma_i(H)$. The smallest $c$ is then the \emph{nilpotency class} of $H$, denoted $c(H)$. In other words, $c(H)$ is one less than the length of the smallest central series of $H$. 
Relative to a fixed central series as above, say we that $h\in H$ has \emph{depth} equal to $k$ if $h \in \Gamma_k(H)$, but $h \not\in \Gamma_j(H)$ for $j > k$. We shall denote this by $\dep_H(h)=k$ (while not denoting the particular central series). 

Since we will use a particular central series in our proofs, let's recall the proof of Whitehead's theorem. In fact, we will use the technique of this proof to obtain an analogue of Whitehead's theorem for sequential topological complexity.

\begin{theorem}\label{thm:whitehead}
If $G$ is a path-connected group-like space and $X$ is an ANR, then the set $[X,G]$ of homotopy classes of maps $X\to G$ is a nilpotent group of nilpotency class 
\[
c([X,G]) \leq \cat(X).
\]
\end{theorem}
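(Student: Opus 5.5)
The plan follows Whitehead's classical argument, so that the central series it produces can be reused later when we talk about depth. The first step is to put a group structure on $[X,G]$ by pointwise multiplication, $[f]\cdot[g]=[\mu\circ(f,g)]$, with inverse $[\eta\circ f]$ and identity $[c_e]$. The group axioms hold because $\mu$ is homotopy associative and $\eta$ is a compatible inversion.

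Next, let $\cat(X)=n$ and choose an open cover $U_0,\dots,U_n$ of $X$ by sets that are contractible in $X$. Since $X$ is an ANR, hence paracompact and normal, we may pass to closed sets $A_j\subset U_j$ that still cover $X$. For $1\le k\le n+1$ we define
\[
\Gamma_k=\left\{[f]\in[X,G]\ \middle|\ f_{|A_0\cup\cdots\cup A_{k-2}}\simeq c_e\right\},
\]
with $\Gamma_1=[X,G]$. Each $\Gamma_k$ is a subgroup, because nullhomotopies on a subset multiply pointwise. Each $\Gamma_k$ is normal, because conjugating a map that is nullhomotopic on a set gives a map that is again nullhomotopic there. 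Moreover $\Gamma_{n+2}$ consists of classes nullhomotopic on all of $X$, so $\Gamma_{n+2}=\{1\}$. We need to be careful with indexing so that the series has exactly $n+1$ steps. Every map is nullhomotopic on $A_0$, since $A_0$ is contractible in $X$; hence $\Gamma_1=\Gamma_2$, and the effective series $\Gamma_2\trianglerighteq\cdots\trianglerighteq\Gamma_{n+2}$ has length $n+1$, which gives class at most $n$.

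The key step is centrality: if $[f]\in\Gamma_1$ and $[g]\in\Gamma_k$, then the commutator $[f,g]$ lies in $\Gamma_{k+1}$. Put $B=A_0\cup\cdots\cup A_{k-2}$, on which $g$ is nullhomotopic, and $C=A_{k-1}$, on which $f$ is nullhomotopic. The commutator map $x\mapsto f g f^{-1} g^{-1}$ is nullhomotopic on $B$ and on $C$ separately, since on each of them one of the two entries is homotopically trivial and the rest cancels by group-like identities. We then need a single nullhomotopy on $B\cup C$.

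This gluing is the main obstacle. Here we would follow Whitehead's device. Because $X$ is an ANR, the homotopy extension property lets us deform $g$ to a map that equals $e$ identically on a neighbourhood of $B$, and $f$ to a map equal to $e$ on a neighbourhood of $C$. The commutator of the deformed maps is then literally constant on a neighbourhood of $B\cup C$, up to the homotopies making $\mu(x,e)=x$ and $\mu(x,\eta(x))\simeq e$ hold. To make this strict, we would replace $G$ by a homotopy equivalent strict topological group, for instance a loop-space model such as $\Omega BG$ or a Moore-loop construction, where these identities hold on the nose. Alternatively, we could use the classical argument that maps which are strictly $e$ on complementary closed pieces have strictly trivial commutator there. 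With this in place, $[f,g]\in\Gamma_{k+1}$, the series is central, and $c([X,G])\le\cat(X)$.
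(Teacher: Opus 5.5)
Your setup matches the paper's: the group structure, the closed shrinking of a categorical cover, the filtration by null-homotopy on $A_0\cup\cdots\cup A_{k-2}$ (your $\Gamma_k$ is the paper's $\Gamma_{k-1}$, and $k$ should run up to $n+2$), and the use of homotopy extension to get $\wt g\simeq g$ with $\wt g(B)=e$ and $\wt f\simeq f$ with $\wt f(C)=e$. The gap is in the last step, where you conclude that the commutator is null-homotopic on $B\cup C$.

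Your main device is to replace $G$ by a homotopy equivalent strict topological group such as $\Omega BG$ or a Moore-loop model. That is not available here. The theorem concerns an arbitrary path-connected group-like space, which is only a homotopy-associative $H$-space with a homotopy inverse. Homotopy associativity is just the $A_3$ stage of Stasheff's hierarchy. Without an $A_\infty$-structure there is no classifying space $BG$, and $G$ need not be $H$-equivalent to a loop space or a topological group. The Moore construction only applies to spaces already presented as loop spaces.

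Your fallback, that maps strictly equal to $e$ on complementary closed pieces have strictly trivial commutator there, holds only in a strict group. In $G$, $\Phi(x,e)=\mu(\mu(x,e),\mu(\eta(x),\eta(e)))$ is not $e$ on the nose. So as written, your argument only covers $G$ that are $H$-equivalent to topological groups.

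The missing idea, which is exactly what the paper uses, is that no strictification is needed. The commutator map $\Phi=\mu\circ(\mu,\mu)\circ(\id_G,\id_G,\eta,\eta)\circ\Delta\colon G\times G\to G$ is null-homotopic on the wedge $G\vee G=G\times\{e\}\cup\{e\}\times G$. On each summand it reduces to the constant map via the unit, inverse and associativity homotopies.

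After your rigidification, $(\wt f,\wt g)$ sends $B$ into $G\times\{e\}$ and $C$ into $\{e\}\times G$, hence sends $B\cup C$ into $G\vee G$. So $\Phi\circ(\wt f,\wt g)$ restricted to $B\cup C$ factors through $\Phi|_{G\vee G}$ and is null-homotopic. That gives $[f,g]\in\Gamma_{k+1}$ directly. The only pasting required is the universal one inside $G\vee G$, independent of $f$ and $g$. It also only needs $\wt f=e$ on $C$ and $\wt g=e$ on $B$ themselves, not on neighbourhoods.
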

\begin{proof}
First, we describe the group structure on $\cG = [X,G]$. The multiplication on $\cG$ is given by
\[
[\alpha] \cdot [\beta] = [\mu\circ (\alpha, \beta)],
\]
where $\mu\colon G \times G \to G$ is the multiplication on $G$, the inversion on $\cG$ is given by
\[
[\alpha]^{-1}=[\eta\circ \alpha].
\]
where $\eta\colon G \to G$ is the inversion on $G$, and the identity on $\cG$ is the constant map $c_e$ to the identity $e$ on $G$. This indeed gives a group structure on $\cG$ since $\mu$ is homotopy-associative and compatible with $\eta$. 

Let $\cat(X)=n$, so that $\{A_i\}_{i=0}^n$ is a closed \emph{categorial} cover of $X$, i.e., $\id_{|A_i}\simeq\ast$ for each $i$ (note that the ANR condition ensures that we may use such a \emph{closed} cover, see~\cite[Section~1.2]{CLOT}). For $1\le i \le n+1$, define the sets
\[
\cA_{i-1} = \bigcup_{k=0}^{i-1}A_k \quad \text{ and }\quad \Gamma_i(\cG) = \left\{[\alpha]\in [X,G]\,\middle|\, \alpha_{|\cA_{i-1}} \simeq *\right\}.
\]
Clearly, we have $\Gamma_1(\cG)=\cG$ since $\id_{|A_0}\simeq\ast$,
$\Gamma_{i+1}(\cG) \subset \Gamma_i(\cG)$ for $1\le i \le n$ since $\cA_{i-1} \subset \cA_{i}$, and $\Gamma_{n+1}(\cG)=\{c_e\}$ since $\cA_n=X$. 

Now, suppose $[\alpha]\in \cG$ and $[\beta]\in \Gamma_i(\cG)$ for some $i$. Because $X$ is an ANR, it has the homotopy extension property with respect to its closed subsets. Therefore, since $\alpha$ is null-homotopic on $A_i$, there exists a map $\wt \alpha_i\colon X \to G$ such that $\wt \alpha_i\simeq \alpha$ and $\wt \alpha_i(A_i)=e$ (note that the path-connectedness condition ensures the latter). Similarly, there exists a map $\wt \beta\colon X \to G$ such that $\wt\beta (\cA_{i-1})=e$ and $\wt\beta\simeq \beta$. In particular, $[\wt\beta]\in\Gamma_i(\cG)$. Next, we describe a commutator map in $\cG$. A 
commutator map $\Phi\colon G \times G \to G$ can be defined by 
$$\Phi = \mu \circ (\mu, \mu)\circ (\id_G, \id_G, \eta, \eta)\circ \Delta,$$
where $\Delta \colon G\times G \to (G\times G)^2$ is the diagonal map.
The key point to notice is that $\Phi|_{G \vee G}$ is null-homotopic, where $G\vee G$ has wedge-point $e$. For instance,
$$\Phi(g,e) = \mu(\mu(g,e),\mu(\eta(g),\eta(e)))\simeq \mu(g,\mu(\eta(g),e))\simeq \mu(g,\eta(g))\simeq c_e(g)=e$$
for each $g\in G$. The induced commutator $\Phi_{\cG}\colon\cG\times\cG\to\cG$ on $\cG$ is then given by
$$\Phi_{\cG}([\gamma],[\delta]) = [\Phi \circ (\gamma,\delta)].$$
For brevity, we write $\Phi_{\cG}([\gamma],[\delta])
=[\gamma,\delta]$. To justify that $\{\Gamma_j(\cG)\}_{j=1}^{n+1}$ gives a central series of $\cG$, we show that $[\alpha,\beta]\in \Gamma_{i+1}(\cG)$ for $[\alpha]\in \cG$ and $[\beta]\in\Gamma_i(\cG)$. By what has been
said above, we can replace $\alpha$ and $\beta$ with $\wt \alpha_i$ and $\wt\beta$, respectively. 
Consider the commutator $[\wt \alpha_i,\wt \beta] = \Phi \circ (\wt \alpha_i, \wt \beta)$. If we
restrict to $\cA_i$, then $\wt\alpha_i(A_i)=e$ and $\wt\beta(\cA_{i-1})=e$. Therefore, $(\wt\alpha_i,\wt\beta)(\cA_i)\subset G\vee G$, so that $\Phi \circ (\wt \alpha_i, \wt \beta)$ is null-homotopic on $\cA_i$. Hence, $\Phi_{\cG}([\alpha],[\beta])=[\alpha,\beta]=[\wt \alpha_i,\wt \beta]\in\Gamma_{i+1}(\cG)$. 
\end{proof}

Let us now present a (partial) $\TC_m$-analogue of Theorem~\ref{thm:whitehead}. As we will subsequently explain, this analogue is natural in a sense and applies in various interesting situations.

\begin{theorem}\label{thm: tc whitehead}
If $m\ge 2$ is an integer, $G$ is a path-connected group-like space, and $X$ is an ANR, then the set $[X^m,G]':=\{f\in [X^m,G]\mid f_{|\Delta_m X}\simeq\ast \}$, where 
$\Delta_m\colon X\to X^m$ is the diagonal map,  is a nilpotent group of nilpotency class 
\[
c([X^m,G]') \leq \TC_m(X).
\]
\end{theorem}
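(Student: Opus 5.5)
We mimic the proof of Theorem~\ref{thm:whitehead}, replacing the categorial cover of $X$ by a cover of $X^m$ adapted to sequential motion planning. Let $\TC_m(X)=n$. Since $X$ is an ANR, so is $X^m$, and (as for $\cat$, cf.~\cite{CLOT}) we may take a \emph{closed} cover $\{B_0,\dots,B_n\}$ of $X^m$ over each member of which $\pi^X_m$ admits a partial section. For each $j$, this means that the restrictions $\pr_{i|B_j}$ are all homotopic, say to $\pr_{1|B_j}$. Equivalently, the inclusion $B_j\hookrightarrow X^m$ is homotopic (inside $X^m$) to a map with image in the diagonal $\Delta_mX$: namely $\Delta_m\circ\pr_1$ restricted to $B_j$.

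First we check that $\cG'=[X^m,G]'$ is a subgroup of $[X^m,G]$. If $f_{|\Delta_mX}\simeq\ast$ and $g_{|\Delta_mX}\simeq\ast$, then $\mu\circ(f,\eta\circ g)$ restricted to $\Delta_mX$ is homotopic to $\mu\circ(c_e,c_e)=c_e$. The key observation is then this: if $[\beta]\in\cG'$ and $B_j$ is as above, then
\[
\beta_{|B_j}\simeq \beta\circ\Delta_m\circ\pr_{1|B_j}\simeq \ast,
\]
because $\beta\circ\Delta_m\simeq\ast$. So every element of $\cG'$ is null-homotopic on each $B_j$. This plays the role of the fact that $\id_{|A_i}\simeq\ast$ in Whitehead's proof.

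Now set $\cB_{i-1}=\bigcup_{k=0}^{i-1}B_k$ and
\[
\Gamma_i(\cG')=\left\{[\alpha]\in\cG'\ \middle|\ \alpha_{|\cB_{i-1}}\simeq\ast\right\}
\]
for $1\le i\le n+1$. By the key observation, $\Gamma_1(\cG')=\cG'$. The sets are decreasing, and $\Gamma_{n+1}(\cG')$ is trivial since $\cB_n=X^m$. Each $\Gamma_i$ is a subgroup by the same argument used for $\cG'$.

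For centrality, take $[\alpha]\in\cG'$ and $[\beta]\in\Gamma_i(\cG')$. Since $\alpha_{|B_i}\simeq\ast$ and $X^m$ has the homotopy extension property for closed subsets (being an ANR), we may replace $\alpha$ by a homotopic $\wt\alpha$ with $\wt\alpha(B_i)=e$, using path-connectedness of $G$. Similarly, replace $\beta$ by $\wt\beta$ with $\wt\beta(\cB_{i-1})=e$. Then $(\wt\alpha,\wt\beta)(\cB_i)\subset G\vee G$, and $\Phi_{|G\vee G}\simeq\ast$. Hence the commutator $\Phi\circ(\wt\alpha,\wt\beta)$ is null-homotopic on $\cB_i$. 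It also lies in $\cG'$ because $\cG'$ is a subgroup. Therefore $[\alpha,\beta]\in\Gamma_{i+1}(\cG')$. This gives a central series of length $n+1$, so $c(\cG')\le n=\TC_m(X)$.

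The main obstacle is the first step: justifying that $\TC_m(X)=n$ yields a \emph{closed} cover of $X^m$ by $n+1$ sets on which the projections are homotopic, exactly as for categorial covers of ANRs. I would first try the standard shrinking argument: an ANR is normal, so an open cover can be shrunk to a closed cover, and restriction preserves partial sections. The remaining points are routine: the homotopy $\pr_{i|B_j}\simeq\pr_{1|B_j}$ gives a homotopy from $B_j\hookrightarrow X^m$ to $\Delta_m\circ\pr_{1|B_j}$ coordinatewise, and the homotopy extension property is applied in $X^m$.
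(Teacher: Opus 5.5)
Your proposal is correct and follows the paper's proof essentially step for step. It uses a closed $\TC_m$-cover of $X^m$ (which you obtain by shrinking, since ANRs are normal) and the observation that every class in $[X^m,G]'$ is null on each $B_j$ because $B_j\hookrightarrow X^m$ deforms into $\Delta_mX$, followed by the filtration $\Gamma_i(\cG')$ by the unions $\cB_{i-1}$ and the commutator argument through $G\vee G$. The only cosmetic difference is how you deform $B_j$ onto the diagonal: you use the homotopies $\pr_{i|B_j}\simeq\pr_{1|B_j}$ to land on $\Delta_m\circ\pr_1$, whereas the paper reparametrizes the section paths $\sigma_j(x)$ explicitly so that every coordinate slides to the endpoint $\sigma_j(x)(t_m)$.
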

\begin{proof}
    The group structure on $\cG'=[X^m,G]'$ is defined in the same way as in the proof of Theorem~\ref{thm:whitehead}. Indeed, such multiplication and the inversion maps on $\cG'$ are well-defined since, if $[\alpha],[\beta]\in\cG'$, then $\mu\circ (\alpha, \beta)_{|\Delta_mX}\simeq\ast$ and $\eta\circ \alpha_{|\Delta_mX}\simeq\ast$, which means 
    \[
    [\alpha] \cdot [\beta] = [\mu\circ (\alpha, \beta)]\in\cG'\ \text{ and }\ [\alpha]^{-1}=[\eta\circ \alpha]\in\cG'.
    \]
Let $\TC_m(X)=n$, so that there is a \emph{closed} cover $\{B_i\}_{i=0}^{n}$ of $X^m$ and partial sections $\sigma_i\colon B_i\to X^I$, for $0\le i \le n$, of the fibration $\pi^X_m\colon X^I\to X^m$ that evaluates each path at $t_j$ for $1\le j \le m$ (again, such a closed cover exists because $X$, and hence $X^m$, is an ANR). For each $j$, define $\psi_j\colon I\to I$ as $\psi_j(t)=t(t_m-t_j)+t_j$. For each $i \in\{0,\dots, n\}$, the map $H_i\colon B_i\times I \to X^m$ given by
\[
H_i(x_1,\ldots,x_m,s)=\left(\sigma_i(x_1,\ldots,x_m)(\psi_1(t)),\ldots,\sigma_i(x_1,\ldots,x_m)(\psi_m(t))\right)
\]
is a homotopy from the inclusion $B_i\hookrightarrow X^m$ to a map with values in the diagonal $\Delta_mX$. In particular, for any map $f\colon X^m\to G$ with $f_{|\Delta_mX}\simeq \ast$, the restriction $f_{|B_i}$ is null-homotopic for all $i$. For $1\le i \le n+1$, as before, we define the sets
\[
\cB_{i-1} = \bigcup_{k=0}^{i-1}B_k \quad \text{ and }\quad \Gamma_i(\cG') = \left\{[\alpha]\in [X^m,G]'\,\middle|\, \alpha_{|\cB_{i-1}} \simeq *\right\},
\]
so that $\cG'=\Gamma_1(\cG')\supset
\Gamma_{2}(\cG') \supset \cdots\supset\Gamma_n(\cG')\supset\Gamma_{n+1}(\cG')=\{c_e\}$, where the map $c_e\colon X^m\to G$ is the constant map to $e$.

Now, suppose $[\alpha]\in \cG'$ and $[\beta]\in \Gamma_i(\cG')$ for some $i$. As explained above, $\alpha$ is null-homotopic on $B_i$, and $\beta$ is null-homotopic on $\cB_{i-1}$ by assumption, so we get maps $\wt\alpha_i,\wt\beta\colon X^m\to G$ such that $\wt \alpha_i(B_i)=e$, $\wt\beta (\cB_{i-1})=e$, $\wt \alpha_i\simeq\alpha$, and $\wt\beta\simeq\beta$, using the facts that $X^m$ is an ANR and $G$ is path-connected. The above homotopy equivalences imply $\wt\alpha_{i|\Delta_mX}\simeq \ast$ and $\wt\beta_{i|\Delta_mX}\simeq \ast$, so that $[\wt\alpha_i],[\wt\beta]\in\cG'$ indeed.

Also, using the commutator $\Phi$ on $G$ described in the proof of Theorem~\ref{thm:whitehead}, we get an analogous commutator $\Phi_{\cG'}$ on $\cG'$ such that $\Phi_{\cG'}([\gamma],[\delta]) = [\Phi \circ (\gamma,\delta)]$. Again, it is easy to see that $\Phi \circ (\gamma,\delta)_{|\Delta_mX}\simeq\ast$, so that $\Phi_{\cG'}$ is well-defined. Then the same arguments as before give $(\wt\alpha_i,\wt\beta)(\cB_i)\subset G\vee G$, which implies that $\Phi \circ (\wt \alpha_i, \wt \beta)$ is null-homotopic on $\cB_i$, and consequently $\Phi_{\cG'}([\alpha],[\beta])\in\Gamma_{i+1}(\cG')$. This shows that $\{\Gamma_j(\cG')\}_{j=1}^{n+1}$ is a central series of $\cG'$, thereby completing the proof.
\end{proof}

The elements of $[X^m,G]'$ occur naturally, especially when $X$ is also group-like. For instance, with $m=2$ and $X=G$, the map $F=\mu\circ(\id_G\times\pr_2)\colon G\times G\to G$ is null-homotopic on the diagonal and hence belongs to $[X\times X,G]'$.

\subsection{Applications to LS-category and homotopic distance}\label{subsec: applications}
First, we use Whitehead-type theorems to bound the LS-category of some maps from above.

The following result does not seem to have been noticed before. (Throughout, we follow the notations from the proof of Theorem~\ref{thm:whitehead}.)

\begin{proposition}\label{prop:catfbound}
If $G$ is a group-like space, $X$ is an ANR, and $f\colon X\to G$, then 
$$\cat(f) \leq \cat(X) - \dep_{\cG}(f) +1,$$
where $\dep_{\cG}(f)$ is the depth of $f$ in $\cG=[X,G]$, relative to the central series $\{\Gamma_j(\cG)\}_j$. In particular,
$\cat(X)=\dep_{\cG}(f)-1$ if and only if $f \simeq *$.
\end{proposition}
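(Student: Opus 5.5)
The plan is to use the closed categorial cover $\{A_i\}_{i=0}^n$ from the proof of Theorem~\ref{thm:whitehead}, with $n=\cat(X)$ and $\cA_{i-1}=\bigcup_{k=0}^{i-1}A_k$. Set $d=\dep_{\cG}(f)$, so $[f]\in\Gamma_d(\cG)$, i.e.\ $f_{|\cA_{d-1}}\simeq\ast$. Then the sets
\[
\cA_{d-1},\ A_d,\ A_{d+1},\ \dots,\ A_n
\]
cover $X$, and there are $n-d+2$ of them. On each of them $f$ is null-homotopic. For $\cA_{d-1}$ this holds by the definition of $\Gamma_d(\cG)$. For each $A_k$ it holds because the inclusion $A_k\hookrightarrow X$ is null-homotopic, so $f_{|A_k}$ is as well. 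This is a cover by $n-d+2$ closed sets on each of which $f$ is null-homotopic, and it gives $\cat(f)\le n-d+1=\cat(X)-\dep_{\cG}(f)+1$.

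The one technical point is that $\cat(f)$ is defined through open covers, or equivalently via $\secat$ of $f^*p^G_2$. So the closed cover has to be converted into an open one. Since $X$ is an ANR, a closed set on which a map into $G$ is null-homotopic has an open neighbourhood on which the map is still null-homotopic. To see this, extend the null-homotopy, defined on $A\times I\cup X\times\{0\}$, to a neighbourhood using the homotopy extension property or the ANR neighbourhood extension property; this is the same mechanism as~\cite[Section~1.2]{CLOT}. Alternatively, one can argue directly with partial sections of $f^*p^G_2$ over closed sets and invoke the standard equivalence of open and closed versions of $\secat$ over ANRs. I expect this conversion to be the only real obstacle, and it is routine.

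For the ``in particular'', I read the intended statement as $\dep_{\cG}(f)=\cat(X)+1$ (the printed ``$\cat(X)=\dep_{\cG}(f)-1$''), and argue both directions. If $f\simeq\ast$, then $[f]=c_e\in\Gamma_{n+1}(\cG)$, which is the deepest term, so $\dep_{\cG}(f)=n+1$. Conversely, $\dep_{\cG}(f)=n+1$ means $f_{|\cA_n}=f\simeq\ast$. One can also deduce this from the inequality just proved: it gives $\cat(f)\le 0$, and hence $f\simeq\ast$.
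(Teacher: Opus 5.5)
This is the paper's proof: the same cover $\cA_{d-1},A_d,\dots,A_n$ by $n-d+2$ closed sets, each of which $f$ is null-homotopic on, the same count giving $\cat(f)\le n-d+1$, and the same reading of the ``in particular'' as $\dep_{\cG}(f)=\cat(X)+1\iff f\simeq\ast$ (the paper simply counts the closed sets and never discusses passing to open covers). If you keep your extra conversion step, note that it is routine only for the $A_k$, which you can enlarge to open sets contractible in the ANR $X$ exactly as for $\cat(X)$; for $\cA_{d-1}$ the null-homotopy takes values in $G$, so the neighbourhood-extension argument you sketch needs $G$ to be an ANR, or $(X,\cA_{d-1})$ to be a genuine cofibration, and $X$ being an ANR is not enough.
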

\begin{proof}
Let $\cat(X)=n$ with a closed categorical cover $\{A_i\}_{i=0}^{n}$. 
Also, suppose the (relative) depth of $f$ in $\cG$ is $\dep_{\cG}(f)=k$. Then $f\in \Gamma_k(\cG)$
and $f \not\in \Gamma_m(\cG)$ for $m > k$, so that $f|_{\cA_{k-1}}
\simeq *$. Because the cover is categorical, we also see that $f|_{A_j}
\simeq *$ for $j\in\{ k, \ldots, n\}$. Therefore, $f$ is null-homotopic on $1 + (n-k)+1$ sets that cover $X$. Hence,
$$
\cat(f) \leq n-k+1 = \cat(X) - \dep_{\cG}(f) + 1.
$$
For the last part, we know that $f \simeq *$ if and only if $\dep_{\cG}(f)=n+1$; that is
$f$ is null-homotopic on $\cA_n = X$. But $n=\cat(X)$, so the conclusion follows.
\end{proof}

We then get the following statement (in the notations of the proof of Theorem~\ref{thm: tc whitehead}), whose proof is entirely analogous to that of Proposition~\ref{prop:catfbound} and hence omitted.

\begin{proposition}\label{prop:catfbound-usingtcm}
If $G$ is a group-like space, $X$ is an ANR, and $g\colon X^m\to G$ is such that $g_{|\Delta_m X}\simeq\ast$, then 
$$\cat(g) \leq \TC_m(X) - \dep_{\cG'}(g) +1,$$
where $\dep_{\cG'}(g)$ is the depth of $g$ in $\cG'=[X^m,G]'$, relative to the central series $\{\Gamma_j(\cG')\}_j$. In particular, $\TC_m(X)=\dep_{\cG'}(g)-1$ if and only if $g \simeq *$.
\end{proposition}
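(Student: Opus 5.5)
We will mimic the proof of Proposition~\ref{prop:catfbound}, now using the closed cover $\{B_i\}_{i=0}^n$ of $X^m$ coming from $\TC_m(X)=n$ and the central series $\{\Gamma_j(\cG')\}_{j=1}^{n+1}$ built from it in the proof of Theorem~\ref{thm: tc whitehead}.

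Suppose $\dep_{\cG'}(g)=k$. Then $[g]\in\Gamma_k(\cG')$, so $g_{|\cB_{k-1}}\simeq\ast$. This gives one closed set on which $g$ is null-homotopic.

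For each $j\in\{k,\dots,n\}$, the proof of Theorem~\ref{thm: tc whitehead} provides a homotopy $H_j$ deforming the inclusion $B_j\hookrightarrow X^m$ into $\Delta_mX$. Since $g_{|\Delta_mX}\simeq\ast$, composing $g$ with $H_j$ and then with a null-homotopy of $g_{|\Delta_mX}$ shows that $g_{|B_j}\simeq\ast$. This is the only point that differs from the $\cat$ case, where the sets $A_j$ were categorical outright.

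The sets $\cB_{k-1},B_k,\dots,B_n$ are $n-k+2$ closed sets covering $X^m$, and $g$ is null-homotopic on each. Since $X^m$ is an ANR, closed covers of this kind can be used to compute $\cat(g)$, exactly as in Theorem~\ref{thm:whitehead} and Proposition~\ref{prop:catfbound}. We may also enlarge them to open neighbourhoods to which the null-homotopies extend. Hence
\[
\cat(g)\le n-k+1=\TC_m(X)-\dep_{\cG'}(g)+1.
\]
The main thing to check is this passage from the closed cover to $\cat(g)$; it is the standard ANR fact the paper already relies on.

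For the final assertion, $\Gamma_{n+1}(\cG')=\{c_e\}$ because $\cB_n=X^m$. So $g\simeq\ast$ if and only if $[g]\in\Gamma_{n+1}(\cG')$, which is equivalent to $\dep_{\cG'}(g)=n+1$, that is, $\TC_m(X)=\dep_{\cG'}(g)-1$. Here we read depth $n+1$ as the depth of the identity relative to this particular series, following the same convention as in Proposition~\ref{prop:catfbound}.
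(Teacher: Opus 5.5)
Your proof is correct and follows the argument the paper intends; the paper omits it as entirely analogous to the $\cat(X)$-version. As there, depth $k$ gives $g_{|\cB_{k-1}}\simeq\ast$, the deformations of $B_k,\dots,B_n$ into $\Delta_mX$ from the proof of Theorem~\ref{thm: tc whitehead} make $g$ null on those sets, counting the $n-k+2$ sets gives the bound, and the final equivalence comes from $\Gamma_{n+1}(\cG')=\{c_e\}$. The closed-to-open step you flag is used by the paper in exactly the same way, though enlarging the null-homotopy on $\cB_{k-1}$ to an open neighbourhood is not guaranteed by $X^m$ being an ANR alone; it goes through if, e.g., $G$ is an ANR, while for the $B_j$ one can instead extend the partial sections of $\pi^X_m$.
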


\begin{remark}
Since $\TC_m(X)\le\cat(X^m)$ in general (with strict inequality holding for several spaces, notably for path-connected group-like spaces and CW $H$-spaces --- see Section~\ref{sec: h-spaces}), the upper bounds implied by Theorem~\ref{thm: tc whitehead} and Proposition~\ref{prop:catfbound-usingtcm} are strict improvements to the bounds implied respectively by Theorem~\ref{thm:whitehead} and Proposition~\ref{prop:catfbound} for maps $f\colon X^m\to G$ that are null-homotopic on the diagonal $\Delta_mX$. For example, if $X$ is the product of $k$ positive-dimensional spheres, then 
    \[
    c([X^m,G])\le km\quad\text{and}\quad c([X^m,G]')\le km-(k-\ell),
    \]
    where $\ell$ is the number of even-dimensional spheres in the product.
\end{remark}

Before proceeding further, let us give a concrete instance where Proposition~\ref{prop:catfbound-usingtcm} (and hence Theorem~\ref{thm: tc whitehead}) are useful.

\begin{ex}\label{ex: depth of Fm}
    Let $G$ be group-like and consider the map $F_m\colon G^m\to G^{m-1}$ defined in~\eqref{eq: map Fm}. Proposition~\ref{prop:catfbound} implies that
    \[
    \cat(F_m)\le\cat(G^m)-\dep_{\cG^{m-1}}(F_m)+1,
    \]
    where $\dep_{\cG^{m-1}}(F_m)$ is the depth of $F_m$ in the nilpotent group $\cG^{m-1}:=[G^m,G^{m-1}]$, relative to the central series $\{\Gamma_j^{m-1}(\cG^{m-1})\}_j$. The latter is defined in precisely the same way as $\{\Gamma_j(\cG)\}_j$ in the proof of Theorem~\ref{thm:whitehead}, which makes sense since $G^{m-1}$ is also group-like. By the definition of $F_m$ and the discussion in Remark~\ref{rem: map F}, it follows that $F_{m|\Delta_mG}\simeq \ast$. So, Proposition~\ref{prop:catfbound-usingtcm} applies and because $\TC_m(G)=\cat(G^{m-1})$, we get the upper bound
    \[
    \cat(F_m)\le\cat(G^{m-1})-\dep_{(\cG^{m-1})'}(F_m)+1,
    \]
    where $\dep_{(\cG^{m-1})'}(F_m)$ is the depth of $F_m$ in $(\cG^{m-1})':=[G^m,G^{m-1}]'$, relative to the central series $\{\Gamma_j^{m-1}((\cG^{m-1})')\}_j$, defined as in the proof of Theorem~\ref{thm: tc whitehead}. In particular, this upper bound is better than the previous one in the case when $F_m$ has the same depths in $\cG^{m-1}$ and $(\cG^{m-1})'$ relative to the above central series.
\end{ex}

Finally, we see an application of Proposition~\ref{prop:catfbound} to the sequential homotopic distance between maps from ANRs to group-like spaces.

\begin{proposition}\label{prop:catfbound}
    If $X$ is an ANR, $G$ is a group-like space, and $f_i\colon X\to G$ are maps for $1\le i \le m$, then we have that
    \[
    \D(f_1,\dots,f_m)\le\cat(X)-\min\left\{\dep_{\cG}(f_i)\ \middle|\ 1\le i \le m\right\}
    +1,
    \]
    where $\dep_{\cG}(f_i)$ is the depth of $f_i$ in $\cG=[X,G]$ for each $i$, relative to the central series $\{\Gamma_j(\cG)\}_j$.
\end{proposition}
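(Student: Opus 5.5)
The plan is to work directly with the open-set definition of $\D$ and the closed categorical cover from the proof of Theorem~\ref{thm:whitehead}, rather than with $F_m$. Let $\cat(X)=n$ with closed categorical cover $\{A_i\}_{i=0}^n$, and put $k=\min\{\dep_{\cG}(f_i)\mid 1\le i\le m\}$. By the definition of depth, each $f_i$ lies in $\Gamma_{\dep_{\cG}(f_i)}(\cG)\subseteq\Gamma_k(\cG)$, because the $\Gamma_j(\cG)$ are nested. Hence every $f_i$ is null-homotopic on $\cA_{k-1}=A_0\cup\cdots\cup A_{k-1}$. In particular, all the $f_i$ restricted to $\cA_{k-1}$ are homotopic to one another, since $G$ is path-connected and so all constant maps agree up to homotopy.

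On each remaining set $A_j$ with $k\le j\le n$, the inclusion $A_j\hookrightarrow X$ is null-homotopic. So each $f_{i|A_j}$ is homotopic to a constant map, and again all the $f_{i|A_j}$ are mutually homotopic. This gives a cover of $X$ by the $1+(n-k+1)=n-k+2$ sets $\cA_{k-1},A_k,\dots,A_n$, on each of which the maps $f_1,\dots,f_m$ are pairwise homotopic. That count yields $\D(f_1,\dots,f_m)\le n-k+1=\cat(X)-k+1$, which is the claimed bound. A slicker route is also available: by Proposition~\ref{prop:catfbound} (the one-map version), $\cat(f_i)\le\cat(X)-\dep_{\cG}(f_i)+1$, and one could try to combine this with Theorem~\ref{thm: main1}. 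However, $F_m\circ(f_1,\dots,f_m)$ does not obviously have larger depth, so the direct cover argument is cleaner.

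The only genuine obstacle is that $\D$ is defined via open covers, while the sets above are closed. I would handle this exactly as in the reduction to closed categorical covers in \cite[Section~1.2]{CLOT}. Since $X$ is an ANR, and in particular normal, each closed set $C$ on which the $f_i$ are mutually homotopic has an open neighborhood to which the homotopies extend. Concretely, a homotopy $C\times I\to G$ between $f_{i|C}$ and $f_{i+1|C}$, together with $f_i$ and $f_{i+1}$ on $X\times\{0,1\}$, gives a map from the closed subset $X\times\{0,1\}\cup C\times I$. This map extends to a neighborhood, because $G$ may be replaced by its ANR-like homotopy extension behaviour. Alternatively, one can use that homotopy classes on ANRs are determined on neighborhoods via the homotopy extension property already invoked in Theorem~\ref{thm:whitehead}.

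Shrinking appropriately gives an open neighborhood $U\supset C\times I$ on which the extension is defined; taking a smaller open $V\supset C$ with $V\times I\subset U$ (tube lemma, $I$ compact) yields the homotopies on $V$. Doing this for finitely many $i$ and intersecting the resulting neighborhoods produces an open cover of the same cardinality, completing the proof.
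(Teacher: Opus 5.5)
Your argument is correct in substance, but it takes a genuinely different route from the paper. The paper does not build a cover for the $m$ maps at all. It uses Theorem~\ref{thm: main1} to write $\D(f_1,\dots,f_m)=\cat(F_m\circ(f_1,\dots,f_m))$. Since every $f_i$ is null on $\cA_{k-1}$, so is each coordinate $\mu\circ(f_i,\eta\circ f_{i+1})$, and hence $F_m\circ(f_1,\dots,f_m)$ has depth at least $k$ in $[X,G^{m-1}]$. The paper then applies the one-map bound $\cat(g)\le\cat(X)-\dep(g)+1$ to this difference map. So the ``slicker route'' you set aside is exactly the paper's, and the depth estimate you doubted is immediate.

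Your proof instead runs the one-map argument simultaneously for all the $f_i$. It dispenses with Theorem~\ref{thm: main1} and its path-space pullbacks. In the covering step it uses only that $G$ is path-connected; the group-like structure serves only to make $\{\Gamma_j(\cG)\}_j$ a central series, so that depth is defined.

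What the paper's route buys is the intermediate bound
$\D(f_1,\dots,f_m)\le\cat(X)-\dep_{\cG^{m-1}}(F_m\circ(f_1,\dots,f_m))+1$.
This bound sees the differences of the $f_i$ rather than the $f_i$ themselves. For instance, if $f_1=\cdots=f_m$ is essential, it gives $0$, whereas the minimum-depth bound is positive.

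Your treatment of the closed-to-open step is not valid as written. Extending a map from the closed set $X\times\{0,1\}\cup C\times I$ to a neighborhood is a property of the target. It requires $G$ to be an ANR, or homotopy equivalent to one, and $G$ is only assumed group-like. Your alternative appeal to homotopy extension would help only if $(X,C)$ were a cofibration, and an arbitrary closed subset of an ANR need not be one.

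For the sets $A_j$ with $j\ge k$ there is a clean fix. The null-homotopy of the inclusion $A_j\hookrightarrow X$ lands in the ANR $X$, so it extends to an open $U_j\supset A_j$, and then every $f_{i|U_j}$ is null. For $\cA_{k-1}$, however, you only know $f_{i|\cA_{k-1}}\simeq\ast$ in $G$. There you need some ANR-type hypothesis on $G$, or the convention that closed covers are admissible. The paper's proof hides the very same step inside the one-map bound, which passes from a closed cover to $\cat(f)$ without comment. So this is a shared technical point, not a defect of your approach relative to the paper's.
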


\begin{proof}
Following the notations from Example~\ref{ex: depth of Fm}, we will write $\dep_{\cG^{m-1}}(f)$ for the depth of $f$ in $\cG^{m-1}$ relative to the aforementioned central series of $\cG^{m-1}$. Let 
\[
k=\min\left\{\dep_{\cG}(f_i)\ \middle|\ 1\le i \le m\right\},
\]
so that $f_{i|\A_{k-1}}\simeq\ast$. Hence, $\dep_{\cG^m}(f_1,\ldots,f_m) = k$ since 
$(f_1,\ldots,f_m)_{\A_{k-1}} \simeq \ast$ and $k$ is the minimum. 
Recall that the map $F_m\colon G^m\to G^{m-1}$ defined in~\eqref{eq: map Fm} satisfies
\[
F_m\circ (f_1,\dots,f_m)(x)=( \mu(f_1(x),\eta(f_2(x))), \dots, \mu(f_{m-1}(x),\eta(f_m(x))) ).
\]
Hence, $F_m\circ (f_1,\dots,f_m)_{|\A_{k-1}}\simeq\ast$, and so, $\dep_{\cG^{m-1}}(F_m\circ (f_1,\dots,f_m))\ge k$. But $\D(f_1,\dots,f_m)=\cat(F_m\circ(f_1,\dots,f_m))$ by Theorem~\ref{thm: main1} and 
\[
\cat(F_m\circ(f_1,\dots,f_m))\le \cat(X)-\dep_{\cG^{m-1}}(F_m\circ (f_1,\dots,f_m))+1
\]
by Proposition~\ref{prop:catfbound}. But, the map $F_m$ induces a map $\Gamma_i(\cG^m) \to 
\Gamma_i(\cG^{m-1})$, so that 
\[
\dep_{\cG^{m-1}}(F_m \circ (f_1,\ldots,f_m)) \geq 
\dep_{\cG^m}(f_1,\ldots,f_m)=k. 
\]
Hence, we may replace $\dep_{\cG^{m-1}}(F_m 
\circ (f_1,\ldots,f_m))$ by the relatively more computable quantity $k$.
This completes the proof.
\end{proof}

We conclude this section by showing that the above bounds are sharp.

\begin{ex}\label{exam:sharpineq}
For $1\le i \le m$, let $\iota_i\colon G^{m-1}\to G^m$ be the inclusion 
\[
\iota_i(x_1,\dots,x_{m-1})=(x_1,\dots,x_{i-1},x_0,x_{i},\dots,x_{m-1}).
\]
As explained in Proposition~\ref{prop: recovering cat}, $\D(\iota_1,\dots,\iota_m)=\cat(G^{m-1})$. Let $\pr_i\colon G^m\to G^{m-1}$ be the map $\pr_i(x_1,\dots,x_m)=(x_1,\dots,x_{i-1},x_{i+1},\dots,x_m)$, so that $\pr_i\circ\iota_i=\id_{G^{m-1}}$ for each $i$. Then $\cat(\iota_i)=\cat(G^{m-1})$ for each $i$, because Proposition~\ref{prop:catfprops} implies
$$\cat(G^{m-1}) = \cat(\id_{G^{m-1}}) \leq \cat(\iota_i) \leq \cat(G^{m-1}).
$$
Now, let $\cat(G^{m-1})=n$ and take a closed categorical cover $A_0, \ldots,A_n$. For some fixed $i$, if $\iota_i \in \Gamma_j(\cG)$ for $j > 1$ (where $\cG =
[G^{m-1},G^m]$), then $\iota_{i|\cA_{j-1}} \simeq *$. But then $\iota_i$
would be null-homotopic on the $n-j+2$ sets $\cA_{j-1}, A_j,\ldots, A_n$
and hence $\cat(\iota_i) < n=\cat(G^{m-1})$, which is a contradiction. Therefore, $\dep_{\cG}(\iota_i)=1$ for all $i$ and so, $\min\{\dep_{\cG}(\iota_i)\mid 1\le i \le m\}=1$. Hence,
\[
\cat(G^{m-1}) = \D(\iota_1,\dots,\iota_m) \leq \cat(G^{m-1}) - 1+1 =\cat(G^{m-1}).
\]
So, we have a case where the bounds in Propositions~\ref{prop:catfbound} and~\ref{prop:catfbound} are sharp.
\end{ex}

\section{Looping a map}\label{sec:looping}
What happens to homotopic distances when maps are looped? We will see in this section that (distributional) homotopic distance can go up or down. We need the following result.

\begin{lemma}\label{lem:zerohomo}
Suppose $f\colon X \to Y$ is a map of simply connected rational spaces that is 
zero on homotopy groups. Then $\Omega f\colon \Omega X
\to \Omega Y$ is null-homotopic.
\end{lemma}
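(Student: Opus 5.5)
The plan is to use the fact that loop spaces of simply connected rational spaces are, rationally, products of Eilenberg--Mac~Lane spaces, and that maps between such products are detected entirely by homotopy groups.

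First, since $Y$ is a simply connected rational space, $\Omega Y$ is a rational group-like space (indeed a loop space) with each component simply connected. By the classical structure theorem for rational $H$-spaces (Milnor--Moore/Cartan--Serre), there is a homotopy equivalence
\[
\Omega Y\simeq\prod_{n\ge 1}K(\pi_{n+1}(Y),n),
\]
each $\pi_{n+1}(Y)$ being a $\Q$-vector space. The same applies to $\Omega X$. For the purpose of the lemma only the target matters. A map into a product is null-homotopic if and only if each coordinate is. A map $\Omega X\to K(V,n)$ is a class in $H^n(\Omega X;V)$. So it suffices to show that each coordinate class $u_n\in H^n(\Omega X;\pi_{n+1}(Y))$ of $\Omega f$ vanishes.

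Next, I would show that these classes vanish using the rational structure of $\Omega X$. The space $\Omega X$ is also rationally a product of Eilenberg--Mac~Lane spaces, so its rational cohomology is the free graded-commutative algebra on $\mathrm{Hom}(\pi_*(\Omega X),\Q)$. The coordinate class $u_n$ is the pullback $(\Omega f)^*\iota_n$ of the fundamental class. Rational $H$-maps between such products, and in particular loop maps, carry primitives to primitives. Hence $(\Omega f)^*\iota_n$ is primitive in $H^*(\Omega X;\Q)\otimes V$. Primitives of this free Hopf algebra are exactly the indecomposable generators, namely the dual of rational homotopy. So $(\Omega f)^*\iota_n$ is determined by its evaluation on the Hurewicz image of $\pi_n(\Omega X)$, which is the map $(\Omega f)_\#=f_\#$ on $\pi_{n+1}(X)\to\pi_{n+1}(Y)$. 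This is zero by hypothesis, so $u_n=0$ for every $n$, and therefore $\Omega f\simeq *$.

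The main obstacle is the primitivity step: justifying that $\Omega f$ is a (rational) $H$-map and that in the free commutative Hopf algebra $H^*(\Omega X;\Q)$ the primitives coincide with the generators dual to homotopy. The first point holds because $\Omega f$ is a loop map. The second is the Milnor--Moore theorem, and I would cite it rather than reprove it. An alternative, which I would use if the Hopf-algebra route gets delicate, is a Sullivan-model argument. There $f$ has a minimal model $\varphi\colon(\Lambda V_Y,d)\to(\Lambda V_X,d)$ whose linear part is dual to $f_\#$, hence zero. The model of $\Omega f$ is the induced map on the abelian models $(\Lambda sV,0)$ of the loop spaces, namely the linear part of $\varphi$, so it is zero, which again gives $\Omega f\simeq *$. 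Either way, one also needs care with non-finite-type issues, which I would avoid by working degreewise with $\mathrm{Hom}$ into $\Q$-vector spaces.
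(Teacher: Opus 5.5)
Your main Hopf-algebra route has a genuine gap at the primitivity step. From ``$\Omega f$ is an $H$-map'' you may conclude that $(\Omega f)^*\iota_n$ is primitive only if $\iota_n$ is itself primitive in $H^*(\Omega Y;\Q)$ for the \emph{loop} multiplication. This fails in general, because the splitting $\Omega Y\simeq\prod K(\pi_{n+1}(Y),n)$ is an equivalence of spaces, not of $H$-spaces. By Milnor--Moore, $H_*(\Omega Y;\Q)\cong UL_Y$, where $L_Y=\pi_*(\Omega Y)\otimes\Q$ carries the Samelson bracket. Hence the primitives of $H^*(\Omega Y;\Q)$ are $(L_Y/[L_Y,L_Y])^\vee$. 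This is a proper subspace of the indecomposables $L_Y^\vee$ whenever $Y$ has non-trivial rational Whitehead products, so your sentence ``primitives are exactly the indecomposable generators'' is false. Take $Y=S^4_\Q$, the target in Example~\ref{exam:Ddown}. There $H_*(\Omega S^4;\Q)=T(a)$ with $|a|=3$ and $a^2=\tfrac12[a,a]\ne0$. So the coproduct of the degree-$6$ generator of $H^*(\Omega S^4;\Q)$ contains a term $x_3\otimes x_3$, and $H^6(\Omega S^4;\Q)\cong\Q$ has no non-zero primitives. Yet the non-zero $K(\Q,6)$-coordinate class lives in $H^6$, so it is not primitive.

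You can still reach the conclusion with the same ingredients by arguing in homology. The map $(\Omega f)_*$ is a map of Pontryagin algebras, and $H_*(\Omega X;\Q)\cong UL_X$ is generated by the Hurewicz image. Since $(\Omega f)_\#=f_\#=0$, $(\Omega f)_*$ kills that image. Hence $(\Omega f)_*$, and therefore $(\Omega f)^*$, vanishes in positive degrees, and every $u_n=(\Omega f)^*\iota_n$ is $0$. Equivalently, induct on degree. If $(\Omega f)^*=0$ below degree $n$, the coproduct formula makes $(\Omega f)^*\iota_n$ primitive. A primitive class vanishing on the Hurewicz image is then zero.

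Your Sullivan-model alternative asserts a true fact. However, the claim that the model of $\Omega f$ is the linear part of $\varphi$ is essentially the lemma itself. Justifying it needs exactly this Milnor--Moore input together with the natural PBW coalgebra isomorphism $\Lambda L\cong UL$. As written it is an assertion, not a proof.

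The paper avoids Hopf-algebra structure entirely. It passes to the adjoint $f\circ ev\colon\Sigma\Omega X\to Y$ and uses that a suspension is rationally a wedge of spheres. Each restriction to a sphere is then $f$ composed with an element of $\pi_*(X)$, hence null. This is far more elementary and needs no control of the loop multiplication.
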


For us, a \emph{rational space} is a topological space whose all homotopy groups are finite-dimensional rational vector spaces. For instance, the rationalization of a simply connected finite CW complex is a rational space. We shall consider these spaces in Section~\ref{sec:mapgroup} as well.

\begin{proof}
Adjoints to $1_{\Omega X}\colon \Omega X \to \Omega X$ and $\Omega f\colon 
\Omega X \to \Omega Y$ give a commutative diagram
$$\xymatrix{
\Sigma\Omega X \ar[r] \ar[dr]_-{f^\perp} & X \ar[d]^-f \\
& Y.
}
$$
Now, rationally, a suspension is a wedge of spheres and the homotopy class
of a map of a wedge of spheres to a space is determined by the classes restricted to the 
individual spheres. But these restrictions factor through $f$, which is null-homotopic
on all spheres (since it induces zero on homotopy groups). Hence, the map $f^\perp$
is null-homotopic, which implies that its adjoint $f$ is null-homotopic as well.
\end{proof}

This lemma can be used to analyze some examples of the effect of looping on (distributional) homotopic distance.

\begin{ex}\label{exam:Ddown}
Let $\delta\colon S^2 \times S^5 \to S^7$ be a degree $1$ map and $H\colon S^7 \to S^4$ be the Hopf fibration. Define $f:=H\circ \delta\colon S^2 \times S^5 \to S^4$. We use the same 
notation for the rationalizations of these
spaces and maps. Then, for degree reasons, $f_*=0$ on all (rational) homotopy groups. 
Note that $f$ cannot be null-homotopic
because, in that case, $\delta$ would factor through the fiber of $H$ which is $S^3$. 
But the inclusion of $S^3$ in $S^7$ \emph{is}
null-homotopic, so this would imply $\delta$ is null-homotopic and that contradicts the 
fact that its degree is $1$. Therefore, we
have $\sd\D(f,*)\not = 0$, while $\sd\D(\Omega f,*)=0$ in view of Lemma~\ref{lem:zerohomo}.
\end{ex}

\begin{ex}\label{exam:Dup}
Let $u\colon S^{2n+1} \to K(\Q,2n+1)$ for $n \geq 1$ be the non-zero rational cohomology 
class in $H^{2n+1}(S^{2n+1};\Q)$. Here, we use the general 
identification of $H^m(X;\Q)$ with homotopy classes $[X,K(\Q,m)]$ via $\beta \mapsto u$
by pulling back a chosen generator $\beta\in H^{m}(K(\Q,m);\Q)\cong\Q$. Then 
$\sd\cat(u)=1$, because it is not null-homotopic, and $\sd\cat(S^{2n+1})=1$. 
Thus, $\sd\D(u,*)=\sd\cat(u)=1$ by 
Proposition~\ref{prop: recovering tcf}. The rationalization of the sphere 
obeys $S^{2n+1}_\Q \simeq K(\Q,2n+1)$, so we have $\Omega S^{2n+1}_\Q \simeq 
\Omega K(\Q,2n+1) \simeq K(\Q,2n)$, so that 
$\Omega u\colon \Omega S^{2n+1} \to K(\Q,2n)$ is non-zero as well. Because $|\Omega u|=2n$ and $H^*(K(\Q,2n)$ is a polynomial algebra on $\Omega u$, 
we see that $u^k\not = 0$ for all $k>0$. Hence, $\sd\D(\Omega u,*)=\sd\cat(\Omega u)$ is infinite. 
\end{ex}

\begin{proposition}\label{prop:equalhomo}
If $f_i\colon X \to Y$ are based maps between simply connected rational spaces such that $Y$ is a CW $H$-space and $(f_i)_\#=(f_{i+1})_\#\colon \pi_j(X) \to \pi_j(Y)$ for all $j>1$ and $1\le i \le m-1$
then
\[
\sd\D(\Omega f_1, \dots,\Omega f_m)=0.
\]
That is, $\Omega f_i \simeq \Omega f_j$ for $1\le i,j\le m$.
\end{proposition}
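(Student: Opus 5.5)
The strategy is to reduce the statement to $m=2$, since $\D$ vanishes exactly when all the maps are pairwise homotopic. So it suffices to show $\Omega f_i\simeq\Omega f_{i+1}$ for each $1\le i\le m-1$. By Theorem~\ref{thm: main1} for $m=2$ (the codomain $\Omega Y$ is a loop space, hence group-like), this is the same as showing that the ``difference map''
\[
F\circ(\Omega f_i,\Omega f_{i+1})\colon \Omega X\to \Omega Y
\]
is null-homotopic. Here $F=\mu\circ(\id\times\eta)$ is built from loop multiplication and loop inversion. The conclusion $\sd\D(\Omega f_1,\dots,\Omega f_m)=0$ then follows for both the ordinary and the distributional invariants, since $\dD\le\D$.

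To exploit the $H$-structure of $Y$, I would form the difference on the unlooped level. Let $F_Y\colon Y\times Y\to Y$ be the map of Remark~\ref{rem: map F}, and set
\[
g:=F_Y\circ(f_i,f_{i+1})\colon X\to Y.
\]
\begin{itemize}
\item[(a)] \textbf{$g$ is zero on homotopy.} For a CW $H$-space, the multiplication induces addition on $\pi_j(Y)$. Hence $F_Y$ induces $(a,b)\mapsto a-b$ on $\pi_j$, exactly as in the proof of Theorem~\ref{thm: main2}. It follows that $g_\#=(f_i)_\#-(f_{i+1})_\#=0$ for all $j>1$. Since $X$ and $Y$ are simply connected, $g_\#$ vanishes in all degrees.
\item[(b)] \textbf{$\Omega g\simeq\ast$.} Lemma~\ref{lem:zerohomo} applies to $g$ and gives this directly.
\item[(c)] \textbf{Looping commutes with the difference construction.} I want $\Omega g\simeq F\circ(\Omega f_i,\Omega f_{i+1})$. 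Looping is functorial and commutes with products, so $\Omega g=\Omega F_Y\circ(\Omega f_i,\Omega f_{i+1})$. It therefore remains to compare $\Omega F_Y\colon\Omega Y\times\Omega Y\to\Omega Y$ with $F$ built from loop multiplication.
\end{itemize}

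The main obstacle is step (c). The standard fact I would use is that for an $H$-space $Y$, the looped multiplication $\Omega\mu$ is homotopic to loop composition on $\Omega Y$ (the Eckmann--Hilton argument). By uniqueness of solutions in James's algebraic loop $[\Omega Y\times\Omega Y,\Omega Y]$, which is in fact a group here, the map $\Omega F_Y$ solves $[\pr_2]\bullet[\Omega F_Y]=[\pr_1]$. It therefore agrees up to homotopy with $F$ for the loop structure. This yields $\Omega f_i\cdot(\Omega f_{i+1})^{-1}\simeq\ast$ in the group $[\Omega X,\Omega Y]$, so $\Omega f_i\simeq\Omega f_{i+1}$.

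Alternatively, one can sidestep the comparison. Since $Y$ is a simply connected rational $H$-space, it is a product of rational Eilenberg--Mac~Lane spaces, and so the map $[X,Y]\to\operatorname{Hom}(\pi_*X,\pi_*Y)$ combined with Lemma~\ref{lem:zerohomo} can be applied directly to the difference $g$. I would fall back on this if the Eckmann--Hilton comparison becomes delicate for non-associative $\mu$.
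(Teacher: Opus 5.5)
Your proposal is correct and is essentially the paper's argument. The paper also applies Theorem~\ref{thm: main1} to the group-like space $\Omega Y$, identifies the looped difference map $G_m\circ(\Omega f_1,\dots,\Omega f_m)$ with $\Omega\bigl(F_m\circ(f_1,\dots,f_m)\bigr)$ by comparing the two multiplications on $\Omega Y$, and then kills the latter with Lemma~\ref{lem:zerohomo}; the only differences are that it handles all $m$ maps at once via $F_m$ rather than consecutive pairs, and your identification of $\Omega F_Y$ via Eckmann--Hilton and James's unique solvability is slightly cleaner than the paper's computation, which uses an inversion $\eta$ on $Y$ although $Y$ is only assumed to be a CW $H$-space.
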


\begin{proof}
Let $(Y,\mu,e)$ be an $H$-space structure and $F\colon Y\times Y\to Y$ be the map as in Remark~\ref{rem: map F} that produces a map $F_m\colon Y^m\to Y^{m-1}$ as in~\eqref{eq: map Fm}. Since $\Omega Y$ is group-like, it has a homotopy-associative multiplication $m$ with a compatible inversion $\iota$. Let $G=m\circ(\id_{\Omega Y}\times\iota)\colon\Omega Y\times\Omega Y\to \Omega Y$ and define $G_m\colon(\Omega Y)^m\to (\Omega Y)^{m-1}$ similar to $F_m$ but using $G$ instead of $F$ in the definition. Then Theorem~\ref{thm: main1} gives 
\[
\sd\D(\Omega f_1, \dots,\Omega f_m)=\sd\cat(G_m\circ (\Omega f_1, \dots,\Omega f_m)).
\]
There is another mutliplication $\Omega\mu$ on $\Omega Y$, where $\Omega\mu(\ell,\ell')(t)=\mu(\ell(t),\ell'(t))$, and similarly another inversion $\Omega\eta$ on $\Omega Y$. However, it is known that there is a unique multiplication in $[\Omega Y \times \Omega Y,\Omega Y]$ (see~\cite[Theorem 1.5]{Hil}), so these multiplications (and thus inversions) are  homotopic. Thus, for any $\ell\in\Omega X$ and $i\in\{1,\dots,m-1\}$,
\begin{align*}
    m\left(\Omega f_i(\ell),\iota\left(\Omega f_{i+1}(\ell)\right)\right)(t) & \simeq \Omega\mu \left(\Omega f_i(\ell),\Omega\eta\left(\Omega f_{i+1}(\ell)\right)\right)(t)
    \\
    & = \mu\left(f_i(\ell(t)),\eta\left(f_{i+1}(\ell(t))\right)\right)
    \\
    & = \mu\circ \left(\id_{\Omega Y}\times\eta \right) \left(f_i,\eta(f_{i+1})\right)(\ell(t))
    \\
    & = \Omega\left(\mu\circ \left(\id_{\Omega Y}\times\eta \right)\left(f_i,\eta(f_{i+1})\right)\right)(\ell)(t).
\end{align*}
It follows that $G_m\circ (\Omega f_1, \dots,\Omega f_m)\simeq \Omega (F_m\circ (f_1, \dots,f_m))$. Note that for each $i$, $(f_i)_\#=(f_{i+1})_\#$ on all homotopy groups, and so
\[
(F_m\circ (f_1,\dots,f_m))_\#=((f_1)_\#-(f_2)_\#,\dots ,(f_{m-1})_\#-(f_m)_\#)=0
\]
on all homotopy groups. Hence, $\Omega (F_m\circ (f_1, \dots,f_m))\simeq\ast$ by Lemma~\ref{lem:zerohomo}, and thus, 
\[
\sd\D(\Omega f_1, \dots,\Omega f_m)=\sd\cat(\Omega (F_m\circ (f_1, \dots,f_m)))=0.
\]
\end{proof}

\section{Maps to rational groups}\label{sec:mapgroup}

Recall G.~Whitehead's theorem from Section~\ref{subsec: whitehead}, that if $G$ is a compact group-like space, then $[X,G]$ is a compact nilpotent topological group. In this section, we study the rationalization $[X,G]_{\Q}$ of this nilpotent group (see \cite{HMR}) and obtain the following computationally effective theorem for homotopic distances.

\begin{theorem}\label{thm:nilfgstar}
Suppose $G$ is a path-connected compact group-like space of finite type and $X$ is a 
simply connected finite CW complex. For maps $f_i\colon X \to G$, $1\le i \le m$, denote
the rationalizations by $f_{i\Q}\colon X_\Q \to G_\Q$. Then we have that
\[
\D(f_{1\Q},\dots,f_{m\Q})=\cu_{\Q}\left(\Im\left(
\Delta_{m-1}^*\circ \bigotimes_{i=1}^{m-1}\left(f_i^*-f_{i+1}^*\right)\right)
\right),
\]
where 
$f_i^*$ is the map induced in rational cohomology by $f_i$ and 
$\Delta_{m-1}\colon X\to X^{m-1}$ is the diagonal map. If, additionally, $G$ has the 
homotopy type of a CW complex, then $\dD(f_{1\Q},\dots,f_{m\Q})=\D(f_{1\Q},\dots,f_{m\Q})$.
\end{theorem}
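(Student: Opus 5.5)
The plan is to reduce the distance to the rational category of a single map into a product of odd Eilenberg--Mac~Lane spaces, and then read that category off from rational cohomology. First I will apply Theorem~\ref{thm: main1} to the rationalized maps. Since $G$ is a path-connected group-like space, $G_\Q$ is again group-like, so
\[
\D(f_{1\Q},\dots,f_{m\Q})=\cat\bigl(F_m\circ(f_{1\Q},\dots,f_{m\Q})\bigr),
\qquad F_m\colon G_\Q^m\to G_\Q^{m-1}.
\]
Because $G$ is a compact (finite type) group-like space, Hopf's theorem gives $H^*(G;\Q)=\Lambda(x_1,\dots,x_r)$ with the $x_j$ odd and primitive. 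Consequently $G_\Q\simeq\prod_j K(\Q,|x_j|)$ as $H$-spaces, and $G_\Q$ is homotopy commutative. Writing $g:=F_m\circ(f_1,\dots,f_m)$, the map $g_\Q\colon X_\Q\to G_\Q^{m-1}$ is determined by the classes $g^*(x_j^{(k)})$, where $x_j^{(k)}$ is $x_j$ pulled back from the $k$-th factor.

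Next I will use primitivity of the $x_j$ together with $F(a,b)=\mu(a,\eta(b))$ to compute $F^*(x_j)=x_j\otimes1-1\otimes x_j$. On the $k$-th coordinate of $g$ this gives $(f_k,f_{k+1})^*F^*(x_j)=f_k^*(x_j)-f_{k+1}^*(x_j)$. The cohomology of $G^{m-1}$ is $\bigotimes_{k=1}^{m-1}H^*(G)$, and pulling back along $g$ factors as $\Delta_{m-1}^*\circ\bigotimes_k (f_k,f_{k+1})^*F^*$. Hence the subalgebra $\Im(g^*)\subset H^*(X;\Q)$ is exactly $\Im\bigl(\Delta_{m-1}^*\circ\bigotimes_{i=1}^{m-1}(f_i^*-f_{i+1}^*)\bigr)$, where $f_i^*-f_{i+1}^*$ is extended multiplicatively on the exterior generators. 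This is the identification the statement's formula encodes.

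The key step, and the one I expect to be the main obstacle, is to show that for a map $h\colon X_\Q\to\prod K(\Q,n_j)$ with $X$ simply connected and finite, $\cat(h)=\cu_\Q(\Im h^*)$. The lower bound $\cat(h)\ge\cu_\Q(\Im h^*)$ is the standard cup-length bound: it follows from Remark~\ref{rem: dist lower bound} with $f_1=h$, $f_2=\ast$, using $\cat(h)=\D(h,\ast)$ from Proposition~\ref{prop: recovering tcf}. For the upper bound I would work with Sullivan models. The map $h$ is modelled by sending the generators $y_j$ of $(\Lambda(y_j),0)$ to cocycle representatives $a_j$ in the minimal model $(\Lambda V,d)$ of $X$. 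Its rational category is characterised by the Félix--Halperin criterion for maps: the composite $\Lambda(y_j)\to\Lambda V\to\Lambda V/\Lambda^{>n}V$ should admit a homotopy retraction-type factorization. Since the target is a product of odd EM spaces, the map factors through a wedge-type fat wedge exactly when all $(n+1)$-fold products of the $a_j$ vanish. I would first try to show this using the $\cat(K)=\cu$ result for formal targets. Concretely, I would factor $h$ through the ``image'' of $h^*$, modelled by $\Lambda(y_j)/\ker$, and show that the Ganea fibration $G_n(\prod K)$ pulled back along $h$ has a section once products of length $n+1$ of the $a_j$ vanish in cohomology. The obstruction is the Massey/cochain-level vanishing: a cohomological zero is not automatically zero on the cochain level. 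I would handle this using the freeness of $\Lambda(y_j)$ on odd generators, where products vanish in cohomology imply they can be corrected by choosing new representatives via a homotopy of models. Here finiteness of $X$ and the commutative-group structure should make the obstruction-theoretic lift work.

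Finally, for $\dD$, I will note that the lower bound in Corollary~\ref{cor: same lower bound} is stated over $\Q$ for $G$ with the homotopy type of a finite CW complex, which requires the extra CW hypothesis. Combined with $\dD\le\D$, this gives equality.
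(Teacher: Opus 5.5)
Your reduction is the paper's own: Theorem~\ref{thm: main1} applied to the rationalized maps, the primitivity computation of $(F_m\circ(f_1,\dots,f_m))^*$, and the cup-length lower bound are all fine. The gap is the step you single out, the upper bound $\cat(h)\le\cu_\Q(\Im h^*)$ for $h\colon X_\Q\to\prod_j K(\Q,n_j)$. Re-choosing cocycle representatives cannot close it, because the obstructions to factoring the model of $h$ through $\Lambda(y_j)/\Lambda^{>n}(y_j)$ are higher-order (Massey-type) homotopy invariants, not artifacts of choices. In fact the step is false.

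Concretely, let $c\colon S^3\times S^5\to S^8$ collapse onto the top cell, and let $X$ be the unit sphere bundle of $c^*TS^8$. This is a simply connected closed $15$-manifold with minimal model $(\Lambda(\alpha_3,\beta_5,\gamma_7),\,d\gamma=\alpha\beta)$ after rescaling $\gamma$, so $ab=0$ in $H^*(X;\Q)$. Let $f\colon X\to SU(3)$ be the projection followed by a rational equivalence $S^3\times S^5\to SU(3)$; then $f_\Q$ is modelled by the inclusion $\Lambda(\alpha,\beta)\hookrightarrow\Lambda(\alpha,\beta,\gamma)$. For $(f_1,f_2)=(f,c_e)$ the right-hand side of the formula is $\cu_\Q(\Q\{1,a,b\})=1$.

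The relative model of $\Lambda(\alpha,\beta)\to\Lambda(\alpha,\beta)/\Lambda^{>1}(\alpha,\beta)$ needs generators $z_7,w_9$ with $dz=\alpha\beta$ and $dw=\alpha z$. Any $\rho$ with $\rho\circ j=i$, as in the appendix, must send $z\mapsto\gamma$, the only element of degree $7$. It must then send $w$ to an element of degree $9$ with differential $\alpha\gamma\neq 0$, but the model is zero in degree $9$. Independently of that criterion, $\langle a,a,b\rangle=\{\pm[\alpha\gamma]\}\neq\{0\}$ with zero indeterminacy. A lift of $f_\Q$ to the Ganea space $G_1(G_\Q)\simeq\Sigma\Omega G_\Q$, a rational wedge of spheres, would force $0\in\langle a,a,b\rangle$. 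Hence, by Proposition~\ref{prop: recovering tcf}, $\D(f_\Q,c_e)=\cat(f_\Q)=2\neq 1$.

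So the statement as written is false, and the paper does not supply the missing direction either. Its appendix assumes $\cat_0(g)=n<s$ and derives a contradiction, which only re-proves $\cat_0(g)\ge s$, the cup-length bound you already have.

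Your outline does go through under an extra hypothesis such as formality of $X$. Then $g$ is modelled by $\Lambda(y_j)\to H^*(X;\Q)$, $y_j\mapsto g_j$, which kills $\Lambda^{>s}(y_j)$ for $s=\cu_\Q\{g_1,\dots,g_p\}$. It therefore factors strictly through $\Lambda(y_j)/\Lambda^{>s}(y_j)$, and the lifting lemma gives $\cat_0(g)\le s$.

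Your sandwich $\cu_\Q\le\dD\le\D$ for the distributional clause has the right shape but inherits the same failure. Also, Corollary~\ref{cor: same lower bound} is stated for targets of finite CW type, which $G_\Q$ is not, although only the rational surjectivity of $\delta_k^*$ is really used there.
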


As noted previously in Section~\ref{sec:cohombounds}, the above image may not form a ring, but its rational cup length $\cu_\Q$ is still defined. Note also that the above exact value of $\sd\D$ in the case $m=2$ coincides with the general lower bound to $\sd\D$ between such maps by taking $c_1=-c_2$, see Remark~\ref{rem: dist lower bound} and Corollary~\ref{cor: same lower bound}. The latter gives further evidence of the sharpness of those bounds.


Let us begin by recalling that the rationalization of a compact path-connected topological 
group $G$ of finite type is a finite product of rationalized odd-dimensional spheres (see, for instance,~\cite[Section~9.1]{MP}), each of which is a rational Eilenberg--Mac~Lane space. 
Therefore, when $X$ is finite, the rationalization of $[X,G]$ is
\begin{equation}\label{eq:hclassQ}
[X,G]_\Q = \left[X,G_\Q\right] = \left[X,\prod_{k=1}^p S^{n_k}_\Q\right] 
=\left[X,\prod_{k=1}^p K(\Q,n_k)\right] 
= \prod_{k=1}^p \left[X,K(\Q,n_k)\right].
\end{equation}

\begin{remark}\label{rem: eilenberg-maclane}
Note that $f_{i\Q}\in [X,G]_\Q$ are determined (up to homotopy) by 
$(f_{i\Q,1},\ldots, f_{i\Q,p})$ for each $i$, 
where $f_{i\Q,k} \in H^{n_k}(X;\Q)$ for $1\le k\le p$ using 
the identification of the rational cohomology of $X$ in degree $n_k$ with homotopy 
classes $[X,K(\Q,n_k)]$ via pulling back a chosen generator $\beta_k$ of
$H^{n_k}(K(\Q,n_k);\Q)\cong\Q$ by $f_{i\Q,k}$. Of course, this means that if 
$f_{i\Q}^*=f_{i+1\Q}^*$ for each $i$, then $f_{i\Q}\simeq f_{i+1\Q}$, so that 
$\sd\D(f_{1\Q},\dots,f_{m\Q})=0$. The import of Theorem~\ref{thm:nilfgstar} then is to
extend this type of cohomology classification to the general case when 
$f^*_{i\Q}$ and $f^*_{i+1\Q}$ are not the same globally.     
\end{remark}

We need the following lemma first. Its proof uses tools from rational homotopy theory, so we postpone it to the appendix at the end of this paper.

\begin{lemma}\label{lem:catf}
Suppose $X$ is a simply connected finite CW complex and $G$ is a compact group-like 
space. Let $g \in [X,G]$, and let its rationalization $g_\Q \colon X_\Q \to G_\Q$ be represented 
as above by $g_\Q=(g_1,\ldots,g_p)$, where $G\simeq_\Q \prod_{k=1}^p 
K(\Q,n_k)$ and 
$g_k = g_\Q^*(\beta_k) \in H^{n_k}(X;\Q)$ for a generator 
$\beta_k \in H^{n_k}(K(\Q,n_k);\Q)$. Then
\[
\cat(g_\Q)=\cu_\Q(\textup{Im}(g^*))=\cu_\Q\{g_1,\ldots,g_p\}, 
\]
where $\cu_\Q\{g_1,\ldots,g_p\}$ is the length of the longest non-zero cup product(s) of the form $g_{j_1}\smile \cdots\smile g_{j_t}\ne 0$ and $j_k\in\{1,\dots,p\}$ for $1\le k \le t$.  If, additionally, $G$ has the homotopy type of a CW complex, then $\dcat(g_\Q)=\cu_\Q\{g_1,\ldots,g_p\}$ as well.
\end{lemma}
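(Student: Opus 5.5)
The plan is to prove $\cat(g_\Q)=\cu_\Q\{g_1,\dots,g_p\}$ by matching a lower bound with an upper bound. Two preliminary identifications come first.

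Since $H^*(K(\Q,n_k);\Q)$ is free graded-commutative on $\beta_k$ (an exterior class, as $n_k$ is odd), $H^*(G_\Q;\Q)\cong\Lambda(\beta_1,\dots,\beta_p)$. Hence $\Im(g^*)$ is the subalgebra generated by the $g_k$. The positive-degree part of this image consists of polynomials without constant term in the $g_k$. Expanding a product of $t$ such elements gives a sum of monomials of length $\ge t$. A nonzero product of length $t$ therefore forces a nonzero monomial $g_{j_1}\cdots g_{j_s}$ with $s\ge t$, and conversely every such monomial is itself a product of elements of the image. This proves $\cu_\Q(\Im g^*)=\cu_\Q\{g_1,\dots,g_p\}$.

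The lower bound is $\cat(g_\Q)\ge\cu_\Q(\Im g^*)$. This is the standard cup-length bound for the category of a map, and it is the case $m=2$, $c_1=1$, $c_2=-1$ of Remark~\ref{rem: dist lower bound} applied to $\D(g_\Q,\ast)=\cat(g_\Q)$ (Proposition~\ref{prop: recovering tcf}). It uses that $(g_\Q)^*$ and $g^*$ agree under the rationalization isomorphism $H^*(X_\Q;\Q)\cong H^*(X;\Q)$. For $\dcat$, Corollary~\ref{cor: same lower bound} gives the same bound, provided the target has the homotopy type of a finite CW complex. This is why the CW hypothesis appears, and I would pass to a finite rational model, or argue directly with $G$, to make the corollary applicable.

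The upper bound is $\cat(g_\Q)\le c:=\cu_\Q\{g_1,\dots,g_p\}$, and it is the main obstacle. Since $\dcat\le\cat$, it also settles the distributional case. I would use Sullivan models and the rational category of a map. The model of $g_\Q$ is a dga map $\varphi\colon(\Lambda(\beta_1,\dots,\beta_p),0)\to A_{PL}(X)$. Because the source has zero differential and $X$ is a finite complex, we may take the target to be a finite-dimensional model $A$ with $\varphi(\beta_k)$ a cocycle representing $g_k$.

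By the F\'elix--Halperin type characterization, $\cat_0(g)\le c$ holds if $\varphi$ factors up to homotopy through the projection $\Lambda V\to\Lambda V/\Lambda^{>c}V$. Here $\Lambda V=\Lambda(\beta_k)$, and $\Lambda^{>c}V$ is the ideal of products of length $>c$. Now $\varphi$ sends every monomial of length $>c$ to a representative of a zero cohomology class, but not necessarily to the zero cochain. To remedy this, I would replace $A$ by a quasi-isomorphic model in which these products vanish on the nose. Concretely, I would choose cocycle representatives inside a suitable subalgebra, or use that exterior generators can be realized in a model where products of exact cochains are adjusted inductively through the length filtration.

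I expect constructing this strict factorization to be the delicate step. Once it is in hand, the fact that for maps into products of rational Eilenberg--Mac~Lane spaces rational category equals category of the rationalized map (the target is already rational) gives $\cat(g_\Q)\le c$. Combined with the lower bound, this proves both equalities.
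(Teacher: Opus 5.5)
You have the identification $\cu_\Q(\Im g^*)=\cu_\Q\{g_1,\dots,g_p\}$ and the cup-length lower bound right. The gap is the upper bound $\cat(g_\Q)\le c$, and it cannot be filled, because that inequality is false in general.

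\textbf{Why your factorization step fails.} The step you call delicate is obstructed by Massey products. Take $c=1$. Suppose some model $A$ of $X$ carried representatives $a_k=\varphi(\beta_k)$ with $a_ia_j=0$ on the nose. Then $a_{12}=a_{23}=0$ would be a defining system, so $0\in\langle g_1,g_2,g_3\rangle$. Massey products are invariant under quasi-isomorphism, so changing the model does not help.

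The weaker condition $\cat_0(g)\le 1$ already forces the same conclusion. This condition asks for a map $\rho$ out of the relative model $\Lambda V\otimes\Lambda Z$ of $\Lambda V\to\Lambda V/\Lambda^{>1}V$ with $\rho\circ j=i$. Choose $z_{12},z_{23}\in\Lambda V\otimes\Lambda Z$ with $dz_{12}=\beta_1\beta_2$ and $dz_{23}=\beta_2\beta_3$. The cocycle $z_{12}\beta_3\pm\beta_1z_{23}$ lies in the even positive degree $n_1+n_2+n_3-1$, where $H(\Lambda V/\Lambda^{>1}V)=\Q\oplus V$ vanishes. So it is exact, and $\rho$ sends it to an exact representative of $\langle g_1,g_2,g_3\rangle$.

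\textbf{A concrete counterexample.} Let $X=(S^3\vee S^3\vee S^3)\cup_{[[\iota_1,\iota_2],\iota_3]}e^8$ and $G=S^3\times S^3\times S^3$. Whitehead products vanish in $G$, so the inclusion of the wedge extends to a map $g\colon X\to G$. The classes $g_k=x_k$ are the generators of $H^3(X;\Q)$.
\begin{itemize}
\item Since $H^6(X;\Q)=0$, we get $\cu_\Q\{g_1,g_2,g_3\}=\cu_\Q(\Im g^*)=1$.
\item The $8$-cell is attached by an iterated Whitehead product, so the Massey product $\langle x_1,x_2,x_3\rangle$ is a nonzero class in $H^8(X;\Q)\cong\Q$. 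In the minimal model $\Lambda(x_i,y_{ij},\dots)$ with $dy_{ij}=x_ix_j$, it is $[y_{12}x_3\pm x_1y_{23}]$.
\item Its indeterminacy is zero because $H^5(X;\Q)=0$.
\end{itemize}
By the previous paragraph, $\cat(g_\Q)=\cat_0(g)\ge 2$. Topologically, $\cat(g_\Q)\le 1$ would let $g_\Q$ factor through the Ganea space $\Sigma\Omega G_\Q$, a rational wedge of spheres in which Massey products contain $0$. Taking $f_1=g$ and $f_2$ constant, this example also contradicts Theorem~\ref{thm:nilfgstar} for $m=2$.

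\textbf{The paper's proof does not close the gap either.} Its appendix assumes $\cat_0(g)=n<s$ and derives a contradiction. That only reproves $\cat_0(g)\ge s$, the same lower bound you already have.

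\textbf{What your strategy does prove.} It gives the equality under an extra hypothesis such as formality of $X$. Use $(H^*(X;\Q),0)$ as the model and set $\varphi(\beta_k)=g_k$. Every monomial of length $>c$ then maps to $0$ on the nose, by the definition of $c$. So $\varphi$ factors strictly through $\Lambda V/\Lambda^{>c}V$, and $\cat_0(g)\le c$.
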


\begin{proof}[Proof of Theorem~\ref{thm:nilfgstar}]
For convenience, we write $f_i$ instead of the more unwieldy $f_{i\Q}$, but the
reader should always have the rationalized maps in mind.

First, we have the equality $\sd\D(f_1,\dots,f_m)=\sd\cat(F_m\circ (f_1,\dots,f_m))$ from Theorem~\ref{thm: main1}, 
where $F_m\colon G^m\to G^{m-1}$ is defined in~\eqref{eq: map Fm} using the composition $F=\mu\circ(\id_G\times\eta)\colon G\times G\to G$ for a multiplication $\mu$ and an 
inversion $\eta$ on $G$. Taking $F_m\circ (f_1,\dots,f_m)\in [X,G^{m-1}]_\Q$ in 
Lemma~\ref{lem:catf}, we deduce that
\[
\D(f_1,\dots,f_m)=\cu_\Q\left(\Im\left(F_m\circ (f_1,\dots,f_m)\right)^*\right).
\]
Indeed, $G^{m-1}$ is compact and group-like, so everything said in this section so far 
for $G$ applies to $G^{m-1}$ as well. 
Furthermore, if $G$ has the homotopy type of a CW complex, then $G^{m-1}$ has that of a 
finite CW complex, so that we also have
\[
\dD(f_1,\dots,f_m)=\cu_\Q\left(\Im\left(F_m\circ (f_1,\dots,f_m)\right)^*\right).
\]
For convenience, let us write $F_m\circ (f_1,\dots,f_m)=(f_1-f_2,f_2-f_3,\dots,f_{m-1}-f_m)$, where for each $i\in\{1,\dots,m-1\}$, the map $f_i-f_{i+1}\colon X_\Q\to G_\Q$ is the composition
\[
X _\Q \xrightarrow{\Delta}X _\Q\times X _\Q\xrightarrow{f_i\times f_{i+1}} G _\Q\times 
G _\Q\xrightarrow{1\times\eta} G _\Q\times G_\Q \xrightarrow{\mu} G _\Q,
\]
so that the map $F_m\circ (f_1,\dots,f_m)$
can also be seen simply as the composition
\begin{equation}\label{eq: writing Fm composition}
X _\Q \xrightarrow{\Delta_{m-1}} X^{m-1} _\Q \xrightarrow{\prod_{i=1}^{m-1}(f_i-f_{i+1})} 
G^{m-1} _\Q.
\end{equation}
Now, $H^*(G;\Q)\cong\bigwedge(\beta_{n_k})$, an exterior algebra on the generators 
of $H^{n_k}(K(\Q,n_k);\Q)$ for $1\le k\le p$, and it is a well-known fact that these generators are primitive, i.e.,
\[
\mu^*(\beta_{n_k}) = \beta_{n_k} \otimes 1 + 1 \otimes \beta_{n_k}.
\]
We see for the induced map $(f_i-f_{i+1})^*\colon H^*(G;\Q)\to H^*(X;\Q)$ that 
\begin{align*}
(f_i-f_{i+1})^*(\beta_{n_k}) & = \Delta^*((f_i^*\times f_{i+1}^*)((1\times \eta^*)(\mu^*(\beta_{n_k})))) \\
& = \Delta^*((f_i^*\times f_{i+1}^*)((1\times \eta^*)(\beta_{n_k} \otimes 1 + 1 \otimes \beta_{n_k}))) \\
& =  \Delta^*((f_i^*\times f_{i+1}^*)(\beta_{n_k} \otimes 1 - 1 \otimes \beta_{n_k})) \\
& = \Delta^*(f_i^*(\beta_{n_k}) \otimes 1 - 1 \otimes f_{i+1}^*(\beta_{n_k})) \\
& = f_i^*(\beta_{n_k}) - f_{i+1}^*(\beta_{n_k}),
\end{align*}
since $\Delta^*\colon H^*(X\times X;\Q)\to H^*(X;\Q)$ gives the cup product. 
Similarly, we see from~\eqref{eq: writing Fm composition} that $(F_m\circ (f_1,\dots,f_m))^*\colon H^*(G^{m-1};\Q)\cong H^*(G;\Q)^{\otimes m-1}\to H^*(X;\Q)$ on a generator $\beta_{1,n_k}\otimes\cdots\otimes \beta_{m-1,n_k}\in H^*(G;\Q)^{\otimes m-1}$ is
\begin{align*}
(F_m\circ (f_1,\dots,f_m))^*(\beta_{1,n_k}\otimes\cdots\otimes \beta_{m-1,n_k}) & = \Delta_{m-1}^*\left(\bigotimes_{i=1}^{m-1}\left(f_i^*\bigl(\beta_{i,n_k}\bigr) - f_{i+1}^*\bigl(\beta_{i,n_k}\bigr)\right)\right).
\end{align*}
Because the elements $\beta_{n_k}$ generate $H^*(G;\Q)$, and hence $\beta_{1,n_k}\otimes\cdots\otimes \beta_{m-1,n_k}$ generate $H^*(G^{m-1};\Q)$, we see that every element in $\Im(F_m\circ (f_1,\dots,f_{m}))^*$ is a product of elements as on the right side above. This completes the proof.
\end{proof}

We conclude with a few observations.

\begin{ex}
Let $G\simeq_\Q \prod_{k=1}^p K(\Q,n_k)$ and denote by $i_1\colon G_\Q \hookrightarrow G_\Q \times G_\Q$ the map $i_1(x)=(x,e)$. Then  $\sd\D(i_1,*)=\sd\cat(G_\Q)=k$, see Proposition~\ref{prop: recovering tcf}. This is verified by Theorem \ref{thm:nilfgstar} since $\mathrm{Im}(i_1^*)=H^*(G;\Q)$ and 
$\cu_\Q(H^*(G;\Q))=k$.
\end{ex}

\begin{ex}
Let $G\simeq_\Q \prod_{k=1}^p K(\Q,n_k)$ and denote by $\pr_i\colon G_\Q \times G_\Q \to G_\Q$ the respective projections. Then 
$\sd\D(\pr_1,\pr_2)=\sd\TC(G_\Q)=\sd\cat(G_\Q)=k$ by Proposition~\ref{prop: recovering tcf} and Theorem~\ref{thm: main1}. This is again verified by Theorem~\ref{thm:nilfgstar} since $(\pr_1^*-\pr_2^*)(x)=x\otimes 1 - 1\otimes x$ for all $x$, and the cup length of $\Im(\pr_1^*-\pr_2^*)$ (also known as the \emph{zero-divisor cup length} of $G_\Q$~\cite{Far}) in the case of a finite product of $k$ rationalized odd-dimensional spheres is $k$, so that we indeed have $\sd\D(\pr_1,\pr_2)=\cu_\Q(\Im(\pr_1^*-\pr_2^*))=k$. 
\end{ex}

\begin{ex}\label{exam:gauge}
If $\xi\colon G \to P \stackrel{p}{\to} X$ is a principal $G$-bundle (with $G$ a compact Lie group
and $X$ a finite CW complex, for instance), then the \emph{gauge group} of $\xi$, 
denoted $\mathcal G(\xi)$, is the 
group of $G$-equivariant homeomorphisms $h \colon P \to P$ satisfying $p\circ h = p$. It was 
shown in \cite{FO} that the rationalization of $\mathcal G(\xi)$ is the rationalized
mapping space $\text{Map}(X,G_\Q)$. So now, for two elements $h,k \in \mathcal G(\xi)$, 
take the rationalizations $h_\Q,k_\Q\colon X_\Q \to G_\Q$ and consider 
$$\D(h_\Q,k_\Q) = \cu_\Q(\Im(h^*-k^*))$$
as in Theorem \ref{thm:nilfgstar}. This gives a kind of \emph{homotopy metric} on the rationalized
gauge group $\mathcal G(\xi)_\Q$, in the sense that 
\begin{enumerate}
\item $\D(h_\Q,k_\Q)=0$ if and only if $h_\Q \simeq k_\Q$;
\item $\D(h_\Q,k_\Q) = \D(k_\Q,h_\Q)$;
\item $\D(h_\Q,k_\Q) \leq \D(h_\Q,g_\Q) + \D(g_\Q,k_\Q)$ for any $g \in \mathcal G(\xi)$.
This triangle inequality holds for any normal space $X$ (see \cite{MM,CD}). 
\end{enumerate}
Of course, Theorem \ref{thm:nilfgstar} says that this homotopy metric depends only
on the induced rational cohomology homomorphisms of the gauge group elements (thought
of as elements of $\text{Map}(X_\Q,G_\Q)$). As far as we are aware, this is a new 
invariant of gauge groups.
\end{ex}

\appendix

\section{Proof of Lemma \ref{lem:catf}}
A commutative differential graded algebra (cdga) is a \emph{Sullivan cdga} if its
underlying algebra is free commutative $\bigwedge V$ with $V=\{V^n\}$, $n \geq 1$
such that $V$ admits a well-ordered basis $x_\alpha$ having the property that
the differential obeys $d(x_\alpha) \in \bigwedge(x_\beta)_{\beta < \alpha}$. We write
$(\bigwedge V, d)$. The cdga $(\bigwedge V,d)$ is a \emph{Sullivan minimal cdga} if $d$ also
satisfies $d(V) \subset \bigwedge^{\geq 2} V$, where the superscript indicates 
elements whose constituent monomials are non-trivial products of elements of $V$.
Corresponding to each simply connected space $X$, there is a Sullivan minimal cdga
$(\bigwedge V,d)$ that gives the homotopy type of the rationalization $X_\Q$. 
Sullivan minimal models are unique up to isomorphism, and a cdga morphism
of Sullivan minimal cdgas $(\bigwedge V,d_V) \to (\bigwedge W,d_W)$ that induces isomorphisms on cohomology (i.e., a quasi-isomorphism) is, in fact, an isomorphism. 
If $f\colon (A,d_A) \to (B,d_B)$ is a cdga morphism, then there is a commutative diagram
$$\xymatrix{
A \ar[r]^-f \ar[dr]_-i & B \\
& (A \otimes \bigwedge V,d), \ar[u]_-\phi 
}
$$
where $i(a)=a$, $d|_A=d_A$, $d(V) \subset (A^+ \otimes \bigwedge V) \oplus \bigwedge^{\geq 2}V$
such that $V$ admits a well-ordered basis $(x_\alpha)$ with $d(x_\alpha) \in
A \otimes (\bigwedge (x_\beta))_{\beta < \alpha}$, and $\phi$ is a quasi-isomorphism. Here, $A^+$ simply denotes that we exclude all elements from $A^0\cong\Q$, and the map $i$ is called a \emph{relative minimal model} of $f$. If $h\colon X \to Y$ is a map of
simply connected spaces, then there is a (unique up to quasi-isomorphism) relative
minimal model $i\colon (\bigwedge V,d) \to (\bigwedge V \otimes \bigwedge W,d)$ of $h$ satisfying 
$h^*=i^*$, and a quasi-isomorphism
$(\bigwedge V \otimes \bigwedge W,d) \to (\bigwedge Z,d)$, where $(\bigwedge Z,d)$ and $(\bigwedge V,d)$
are the respective minimal models of $X$ and $Y$. Some basic references for rational homotopy 
theory are~\cite{FHT,FOT}.

With all this in mind, we say that for a map $h\colon X \to Y$,
the \emph{rational category} of $h$, denoted $\cat_0(h)$, is the least $n$
such that there is a cdga morphism $\rho$ with $\rho\circ j = i$ in the following
diagram:
$$\xymatrix{
\bigwedge V \ar[r]^-i \ar[d]_-{\text{pr}} \ar[dr]_-j & \bigwedge V \otimes \bigwedge W \\
\bigwedge V/\bigwedge^{>n}V & \bigwedge V \otimes \bigwedge Z \ar[l]_-\simeq \ar@{.>}[u]_-\rho .
}
$$
Here, $i$ is the relative model of $h$,   $\bigwedge V/\bigwedge^{>n} V$ is the cdga having all monomials of length greater
than $n$ killed, $\text{pr}$ is the quotient projection, and $j$ is the relative model of $\text{pr}$. In fact, we have
$\cat(h_\Q) = \cat_0(h)$ for maps of simply connected spaces, see \cite{Fel,FHT}.

In Lemma \ref{lem:catf}, we have $G_\Q \simeq \prod K(\Q,n_k)$ and the Sullivan 
minimal model is given by $(\bigwedge V,d=0)= \bigwedge(\beta_k)$, where 
$V^{n_k} \cong \oplus^t \Q$ such that
$t$ is the number of $K(\Q,n_k)$'s occurring in $G_\Q$. The vanishing of the 
differential means that every element of $\bigwedge V$ is a cocycle and therefore
gives a cohomology class. Now, by the usual cup length estimate, we know that
\[
\cu_\Q\left\{g_1,\dots,g_p\right\} \leq \cu_\Q(\text{Im}(g^*)) \leq \cat(g_\Q)=\cat_0(g),
\]
so we only need to show the opposite
inequality $\cat_0(g)\le \cu_\Q\{g_1,\dots,g_p\}$. Suppose that $\cat_0(g) = n < s$, where $s$ is the length of 
the longest non-zero product $g_{j_1}\smile \cdots\smile g_{j_s}$ in the notation
of Lemma~\ref{lem:catf} (i.e., $g_{j_t}=g_\Q^*(\beta_{j_t})$). The diagram defining 
$\cat_0(g)$ becomes here
$$\xymatrix@C=3pc{
& \bigwedge Z \\
\bigwedge (\beta_k) \ar[r]^-i \ar[d]_-{\text{pr}} \ar[dr]^-j \ar[ur]^-\psi  & 
\bigwedge (\beta_k) \otimes \bigwedge W \ar[u]_-{r}^-{\simeq}  \\
\bigwedge (\beta_k)/\bigwedge^{>n}(\beta_k) & \bigwedge (\beta_k) \otimes \bigwedge Z, 
\ar[l]^-{\simeq}_-{\phi} \ar[u]_-\rho 
}
$$
where $(\bigwedge Z,d)$ is the minimal model of $X$, $\psi\colon \bigwedge (\beta_k) \to \bigwedge Z$ 
is the morphism of minimal models corresponding to $g\colon X \to G$, and 
$i$ is the relative model of $\psi$. Now, $\psi^*=g^*=g_\Q^*$ in rational cohomology, so 
\[
\psi^*(\beta_{j_1}\cdots \beta_{j_s}) = g_{j_1}\smile \cdots\smile g_{j_s} 
\not = 0. 
\]
But then $i^*(\beta_{j_1}\cdots \beta_{j_s}) \not = 0$ since $i$ includes
the $\beta_k$'s into $\bigwedge (\beta_k) \otimes \bigwedge W$ and $r$ is a quasi-isomorphism.
Now, we have $\rho\circ j = i$, so $j^*(\beta_{j_1}\cdots \beta_{j_s}) \not = 0$ as
well, which then implies that $\text{pr}^*(\beta_{j_1}\cdots \beta_{j_s}) \not = 0$ since
$\phi$ is a quasi-isomorphism. But this is a contradiction because all words of
length greater than $n$ were killed and our assumption was $n < s$. Hence, 
$\cat_0(g)=n \geq s$ and we are done.

\end{document}